\setlist[itemize]{noitemsep}
\pgfplotsset{compat=1.15}
\numberwithin{equation}{section}
\theoremstyle{plain}
\newtheorem{theorem}{Theorem}[section]
\newtheorem{proposition}[theorem]{Proposition}
\newtheorem{lemma}[theorem]{Lemma}
\newtheorem{notation}[theorem]{Notation}
\theoremstyle{definition}
\newtheorem{definition}[theorem]{Definition}
\newtheorem{remark}[theorem]{Remark}
\newtheorem{example}[theorem]{Example}
\newtheorem{question}[theorem]{Question}
\newtheorem{conjecture}[theorem]{Conjecture}
\newtheorem{algorithm}[theorem]{Algorithm}
\newcommand\restr[2]{{
  \left.\kern-\nulldelimiterspace 
  #1 
  \vphantom{\big|} 
  \right|_{#2} 
  }}
\def\mathcenterto#1#2{\mathclap{\phantom{#1}\mathclap{#2}}\phantom{#1}}
\let\old@widetilde\widetilde
\def\widetildeto#1#2{\mathcenterto{#2}{\old@widetilde{\mathcenterto{#1}{#2\,}}}}
\let\old@widehat\widehat
\def\widehatto#1#2{\mathcenterto{#2}{\old@widehat{\mathcenterto{#1}{#2\,}}}}
\newcommand{\size}[1]{\left| #1 \right|} 
\newcommand{\set}[1]{{\left\{ #1 \right\}}} 
\newcommand*\closure[1]{\overline{#1}}
\DeclareMathOperator{\rank}{rk}
\DeclareMathOperator{\CC}{\mathbb{C}}
\setlist[itemize]{noitemsep}
\title{Minimal matroids in dependency posets: algorithms and applications to computing irreducible decompositions of circuit varieties

}
\author{Emiliano Liwski and Fatemeh Mohammadi}
\begin{document}
\maketitle

\begin{abstract}
We study point-line configurations, their minimal matroids, and their associated circuit varieties. We present an algorithm for identifying the minimal matroids of these configurations with respect to dependency order, or equivalently, the maximal matroids with respect to weak order, and use it to determine the irreducible decomposition of their corresponding circuit varieties. Our algorithm is applied to several classical configurations, including the Fano matroid, affine plane of order three, MacLane, and Pappus configurations. Additionally, we explore the connection to a conjecture by Jackson and Tanigawa, which provides a criterion for the uniqueness of the minimal matroids.  

\end{abstract}

{\hypersetup{linkcolor=black}
{\tableofcontents}}


\section{Introduction}

A point-line configuration is a finite incidence structure consisting of elements called points and subsets called lines, which satisfy specific conditions. Equivalently, it can be viewed as a simple matroid of rank three. This concept arises in various fields, including the study of the connectivity of moduli spaces of 
line arrangements and the cohomology of their complement manifolds~\cite{rybnikov2011fundamental, nazir2012connectivity,Fatemeh6}, as well as the study of singularities and smoothness of 
their varieties~\cite{vakil2006murphy, lee2013mnev}. 
They also arise in the study of determinantal varieties \cite{bruns2003determinantal}, as well as in the analysis of conditional independence models in statistics \cite{caines2022lattice, clarke2021matroid}. 
Of particular interest are $(v_r, b_k)$ configurations, which notably includes the family of Steiner systems \cite{gropp1991configurations, colbourn2006steiner, lindner2011topics}. For a detailed survey and key examples of these configurations, see \cite{gropp1997configurations}. 

Matroids, introduced by Whitney (see \cite{whitney1992abstract, Oxley, piff1970vector}), are combinatorial structures that generalize and unify various concepts of independence found in matrices and graphs. Given a matroid $M$, its rank is defined as the size of any maximal independent set of $M$. 
Point-line configurations are examples of matroids of rank three.
A finite collection of vectors in a fixed vector space determines a matroid through its linearly dependent sets. If this process is reversible, meaning that for a given matroid $M$, we can construct a collection of vectors that realizes $M$, these vectors are called a realization of $M$. The space of all realizations of $M$ is denoted by $\Gamma_M$, and its Zariski closure, referred to as the matroid variety, is denoted by $V_M$~\cite{gelfand1987combinatorial}.

In this work, we focus on point-line configurations, i.e.~matroids of rank at most three, which, when defined on a common ground set, form a poset under the dependency order defined by $M \leq N$, if every dependent set in $M$ is also dependent in $N$. This dependency order is the reverse of the {\em weak order} discussed in the literature~\cite{Oxley}. Within this framework, we present an algorithm for identifying the minimal matroids of a given point-line configuration $M$. These minimal matroids are the smallest matroids greater than $M$ in the dependency order.

On the geometric side, we focus on the circuit variety $V_{\mathcal{C}(M)}$ of a matroid $M$, which is associated with the circuits, i.e., the minimal dependent sets of $M$. The primary goal in this context is to determine the irreducible decomposition of $V_{\mathcal{C}(M)}$. In \cite{clarke2021matroid}, a decomposition strategy was proposed based on identifying the set of minimally dependent matroids for $M$. However, no explicit method for computing these minimal matroids was provided. In this work, we present an algorithm for identifying the minimal matroids and apply it to determine the irreducible decomposition of their circuit varieties.

For a general matroid $M$ with a large ground set, the algorithm is inefficient due to the large number of cases that arise. However, its efficiency improves significantly for symmetric and well-structured matroids. A notable example is the family of $(v_r, b_k)$ configurations; see Definition~\ref{configuration}. 

We apply our algorithm to several symmetric $v_3$ configurations from \cite{gropp1991configurations}, including the Fano plane, the MacLane configuration, the affine plane of order three, the Pappus configuration, and the second $9_3$ configuration. First, we compute their minimal matroids and then identify those that give rise to irreducible components in the decomposition of the associated circuit variety. Notably, these computations cannot be performed using existing computer algebra systems.

\smallskip
\noindent
{\bf Our contributions.} The main results of this paper are summarized as follows. A central notion in our work is the set of minimal matroids, which is formally introduced in Definition~\ref{minimal matroids}.

\begin{itemize}
\item We introduce an algorithm for identifying the set of all minimal matroids associated with a given point-line configuration, as detailed in Algorithm~\ref{algo m}. 
\item We propose a decomposition strategy for computing the irreducible decomposition of circuit varieties associated with point-line configurations, discussed in Section~\ref{strat}.
\item We compute the irreducible decomposition of circuit varieties for several classical configurations $v_{3}$, which are the Fano plane, the MacLane configuration, the affine plane of order three, the Pappus configuration, and the second $9_{3}$ configuration, as presented in Section~\ref{examples}.
\item Finally, we connect our work to a conjecture by Jackson and Tanigawa in \cite{jackson2024maximal} on the existence of a unique minimal $\mathcal{X}$-matroid and propose a refinement.
\end{itemize}

\noindent
{\bf Outline.} Section~\ref{sec 2} presents key notions, such as point-line configurations and their realization spaces. In Section~\ref{algorithm}, we present an algorithm to identify the minimal matroids of a given point-line configuration. Section~\ref{strat} proposes a decomposition strategy for determining the irreducible decomposition of circuit varieties of point-line configurations. In Section~\ref{examples}, we apply this strategy to compute the irreducible decomposition of circuit varieties for several classical symmetric $v_{3}$ configurations. In Section~\ref{quest}, we mention a related conjecture posed by Jackson and Tanigawa \cite{jackson2024maximal} and provide a counterexample and an alternative formulation of this conjecture. Finally, in Section~\ref{appen}, we introduce techniques for verifying redundant matroid varieties in the decomposition of matroid varieties and provide proofs of the technical lemmas from Section~\ref{examples}.

\medskip
\noindent{\bf Acknowledgement.}~We are grateful to Rémi Prébet for insightful discussions and for implementing the algorithm for computing minimal matroids.  F.M. would like to thank Guardo Elena and Maddie Weinstein for their discussion on the MacLane configuration. E.L. is supported by the FWO PhD fellowship 1126125N. F.M. was partially supported by the grants G0F5921N (Odysseus) and G023721N from the Research Foundation - Flanders (FWO) 
and the grant iBOF/23/064 from the KU Leuven.

\section{Preliminaries}\label{sec 2}

In this section, we briefly recall properties of matroids and their associated varieties, referring to \cite{Oxley, gelfand1987combinatorial, Fatemeh4, Fatemeh5} for details. We use $[n] = \{1, \ldots, n\}$ and $\textstyle \binom{[d]}{n}$ for the set of $n$-subsets of $[d]$.

\subsection{Matroids}

There are several equivalent definitions of matroids, each suited to different contexts. Here, we focus on the circuit definition. For a broader introduction to matroid theory, see \cite{Oxley, piff1970vector}.
\begin{definition}\normalfont
A matroid $M$ consists of a ground set $[d]$ together with a collection $\mathcal{C}$ of subsets of $[d]$, called
circuits, that satisfy the following three axioms: 
\begin{itemize}
\item $\emptyset \not\in\mathcal{C}$.
\item If $C_1,C_2 \in \mathcal{C}$ and $C_1\subseteq C_2$, then $C_1=C_2$.
\item  
 If $e\in C_1\cap C_2$ with $C_{1}\neq C_{2}\in \mathcal{C}$, there exists $C_3\in\mathcal{C}$ such that $C_3\subseteq (C_1\cup C_2)\backslash\{e\}$. 
\end{itemize}
We denote the circuits of $M$ by $\mathcal{C}(M)$, with $\mathcal{C}_{i}(M)$ representing the collection of circuits of size $i$.
\end{definition}

We will require the following terminology.

\begin{definition}\normalfont
Let $M$ be a matroid on the ground set $[d]$.
\begin{itemize}
\item A subset of $[d]$ that contains a circuit is called {\em dependent}, while any subset that does not contain a circuit is {\em independent}. The collection of all dependent sets of $M$ is denoted by $\mathcal{D}(M)$. 
\item A {\em basis} is a maximal independent subset of $[d]$, with respect to inclusion. The set of all bases is denoted by $\mathcal{B}(M)$.
\item For any $F\subset [d]$, the {\em rank} of $F$, denoted $\rank(F)$, is the size of the largest independent set contained in $F$. Note that $\rank([d])$ is the size of any basis. We define $\rank(M)$  as $\rank([d])$.
\item The {\em automorphism group} of $M$, denoted by $\text{Aut}(M)$, is the subgroup of all permutations $\sigma \in \mathbb{S}_d$ such that $X \in \mathcal{D}(M)$ if and only if $\sigma(X) \in \mathcal{D}(M)$.  
\end{itemize}
\end{definition}

Given a matroid $M$ on the ground set $[d]$, we define two operations on $M$: restriction and deletion.
\begin{definition}
\normalfont \label{subm}
For any subset $S \subseteq [d]$, the {\em restriction} of $M$ to $S$ is the matroid 
on $S$ whose rank function is the restriction of the rank function of $M$ to $S$. This matroid is referred to as the restriction of $M$ to $S$. 
Unless otherwise specified, we assume that subsets of $[d]$ possess this structure and refer to them as \textit{submatroids} of $M$. This submatroid is denoted by $M|S$, or simply $S$ when the context is clear. The {\em deletion} of $S$, is denoted by $M\setminus S$, which corresponds to $M|([d]\setminus S)$.
\end{definition}

\begin{example}\normalfont\label{uniform 3}
A uniform matroid $U_{n, d}$ on the ground set $[d]$ of rank $n$ is defined as follows: every subset $S\subset [d]$ with $|S| \leq n$ is independent, while those with $|S| > n$ are dependent. 
\end{example}

\subsection{Point-line configurations}

\begin{definition}\label{point-line}
A point-line configuration $M$ consists of a ground set $[d]$ and a set of lines $\mathcal{L}\subset 2^{[d]}$ such that each line contains at least three distinct elements of $[d]$, and any pair of elements of $[d]$ lies on at most one common line. Equivalently, these can be described as simple matroids of rank at most three. For such a matroid $M$, we adopt the following terminology:

\begin{itemize}
\item A {\em line} is defined as a maximal dependent subset of $[d]$, where every subset of three elements is dependent.  We denote the set of all lines of $M$ by $\mathcal{L}_M$, or simply $\mathcal{L}$ when the context is clear.

\item Elements in $[d]$ are called {\em points}. For any point $p \in [d]$, let $\mathcal{L}_{p}$ denote the set of lines containing $p$. The {\em degree} of $p$ is defined as $\size{\mathcal{L}_{p}}$.

\end{itemize}
\end{definition}

\begin{example}\label{three lines}
Consider the point-line configuration depicted in Figure~\ref{fig:combined} (Left). The set of points is $[7]$, and the lines are $\mathcal{L}=\{   \{{1},{2},{7}\} , \{{3},{4},{7}\}  , \{{5},{6},{7}\} \}$. For the quadrilateral set $\text{QS}$, shown in Figure~\ref{fig:combined} (Right), the set of points is $[6]$ and $\mathcal{L}_{\text{QS}}=\{\{1,2,3\},\{1,5,6\},\{2,4,6\},\{3,4,5\}\}$.
\end{example}

\begin{figure}[h]
    \centering
    \begin{subfigure}[b]{0.3\textwidth}
        \centering
        \begin{tikzpicture}[x=0.75pt,y=0.75pt,yscale=-1,xscale=1]

\tikzset{every picture/.style={line width=0.75pt}} 

\draw    (81.69,116.61) -- (191.16,174.3) ;
\draw    (77,131.88) -- (224,131.88) ;
\draw    (80.13,150.55) -- (191.16,79.27) ;
\draw [fill={rgb, 255:red, 173; green, 216; blue, 230}, fill opacity=1]
(107.34,131.2) .. controls (107.34,133.07) and (108.63,134.58) .. (110.23,134.58) .. controls (111.83,134.58) and (113.12,133.07) .. (113.12,131.2) .. controls (113.12,129.33) and (111.83,127.81) .. (110.23,127.81) .. controls (108.63,127.81) and (107.34,129.33) .. (107.34,131.2) -- cycle ;
\draw [fill={rgb, 255:red, 173; green, 216; blue, 230}, fill opacity=1]
(142.62,149.93) .. controls (142.62,151.8) and (143.91,153.32) .. (145.51,153.32) .. controls (147.11,153.32) and (148.4,151.8) .. (148.4,149.93) .. controls (148.4,148.06) and (147.11,146.55) .. (145.51,146.55) .. controls (143.91,146.55) and (142.62,148.06) .. (142.62,149.93) -- cycle ;
\draw [fill={rgb, 255:red, 173; green, 216; blue, 230}, fill opacity=1]
(173.7,166.79) .. controls (173.7,168.66) and (174.99,170.18) .. (176.59,170.18) .. controls (178.19,170.18) and (179.48,168.66) .. (179.48,166.79) .. controls (179.48,164.92) and (178.19,163.41) .. (176.59,163.41) .. controls (174.99,163.41) and (173.7,164.92) .. (173.7,166.79) -- cycle ;
\draw [fill={rgb, 255:red, 173; green, 216; blue, 230}, fill opacity=1] (201.42,131.2) .. controls (201.42,133.07) and (202.71,134.58) .. (204.31,134.58) .. controls (205.91,134.58) and (207.2,133.07) .. (207.2,131.2) .. controls (207.2,129.33) and (205.91,127.81) .. (204.31,127.81) .. controls (202.71,127.81) and (201.42,129.33) .. (201.42,131.2) -- cycle ;
\draw [fill={rgb, 255:red, 173; green, 216; blue, 230}, fill opacity=1] (167.82,131.2) .. controls (167.82,133.07) and (169.11,134.58) .. (170.71,134.58) .. controls (172.31,134.58) and (173.6,133.07) .. (173.6,131.2) .. controls (173.6,129.33) and (172.31,127.81) .. (170.71,127.81) .. controls (169.11,127.81) and (167.82,129.33) .. (167.82,131.2) -- cycle ;
\draw [fill={rgb, 255:red, 173; green, 216; blue, 230}, fill opacity=1] (177.06,87.18) .. controls (177.06,89.05) and (178.35,90.56) .. (179.95,90.56) .. controls (181.55,90.56) and (182.84,89.05) .. (182.84,87.18) .. controls (182.84,85.31) and (181.55,83.79) .. (179.95,83.79) .. controls (178.35,83.79) and (177.06,85.31) .. (177.06,87.18) -- cycle ;
\draw [fill={rgb, 255:red, 173; green, 216; blue, 230}, fill opacity=1] (145.14,105.91) .. controls (145.14,107.78) and (146.43,109.3) .. (148.03,109.3) .. controls (149.63,109.3) and (150.92,107.78) .. (150.92,105.91) .. controls (150.92,104.04) and (149.63,102.52) .. (148.03,102.52) .. controls (146.43,102.52) and (145.14,104.04) .. (145.14,105.91) -- cycle ;

\draw (166.25,71.57) node [anchor=north west][inner sep=0.75pt]   [align=left] {{\scriptsize $1$}};
\draw (206.28,115.51) node [anchor=north west][inner sep=0.75pt]   [align=left] {{\scriptsize $3$}};
\draw (138.91,153.35) node [anchor=north west][inner sep=0.75pt]   [align=left] {{\scriptsize $6$}};
\draw (170.08,173.62) node [anchor=north west][inner sep=0.75pt]   [align=left] {{\scriptsize $5$}};
\draw (173.44,116.45) node [anchor=north west][inner sep=0.75pt]   [align=left] {{\scriptsize $4$}};
\draw (106.8,111.36) node [anchor=north west][inner sep=0.75pt]   [align=left] {{\scriptsize $7$}};
\draw (136.66,90.59) node [anchor=north west][inner sep=0.75pt]   [align=left] {{\scriptsize $2$}};

   \end{tikzpicture}
        \label{fig:quadrilateral 2}
    \end{subfigure}
    \begin{subfigure}[b]{0.3\textwidth}
        \centering

\tikzset{every picture/.style={line width=0.75pt}} 

\begin{tikzpicture}[x=0.75pt,y=0.75pt,yscale=-1,xscale=1]

\draw [line width=0.75]    (247.65,123.92+60) -- (201.96,207.53+60) ;
\draw [line width=0.75]    (247.65,123.92+60) -- (291.68,207.53+60) ;
\draw [line width=0.75]    (219.23,174.98+60) -- (291.68,207.53+60) ;
\draw [line width=0.75]    (274.41,174.98+60) -- (201.96,207.53+60) ;
\draw [fill={rgb, 255:red, 173; green, 216; blue, 230}, fill opacity=1] (244.87,123.92+60) .. controls (244.87,125.68+60) and (246.12,127.11+60) .. (247.65,127.11+60) .. controls (249.19,127.11+60) and (250.44,125.68+60) .. (250.44,123.92+60) .. controls (250.44,122.15+60) and (249.19,120.73+60) .. (247.65,120.73+60) .. controls (246.12,120.73+60) and (244.87,122.15+60) .. (244.87,123.92+60) -- cycle ;
\draw [fill={rgb, 255:red, 173; green, 216; blue, 230}, fill opacity=1] (244.31,187.1+60) .. controls (244.31,188.87+60) and (245.56,190.29+60) .. (247.1,190.29+60) .. controls (248.64,190.29+60) and (249.88,188.87+60) .. (249.88,187.1+60) .. controls (249.88,185.34+60) and (248.64,183.91+60) .. (247.1,183.91+60) .. controls (245.56,183.91+60) and (244.31,185.34+60) .. (244.31,187.1+60) -- cycle ;
\draw [fill={rgb, 255:red, 173; green, 216; blue, 230}, fill opacity=1] (216.45,174.98+60) .. controls (216.45,176.74+60) and (217.69,178.17+60) .. (219.23,178.17+60) .. controls (220.77,178.17+60) and (222.02,176.74+60) .. (222.02,174.98+60) .. controls (222.02,173.21+60) and (220.77,171.79+60) .. (219.23,171.79+60) .. controls (217.69,171.79+60) and (216.45,173.21+60) .. (216.45,174.98+60) -- cycle ;
\draw [fill={rgb, 255:red, 173; green, 216; blue, 230}, fill opacity=1] (271.62,174.98+60) .. controls (271.62,176.74+60) and (272.87,178.17+60) .. (274.41,178.17+60) .. controls (275.94,178.17+60) and (277.19,176.74+60) .. (277.19,174.98+60) .. controls (277.19,173.21+60) and (275.94,171.79+60) .. (274.41,171.79+60) .. controls (272.87,171.79+60) and (271.62,173.21+60) .. (271.62,174.98+60) -- cycle ;
\draw [fill={rgb, 255:red, 173; green, 216; blue, 230}, fill opacity=1] (288.9,207.53+60) .. controls (288.9,209.29+60) and (290.14,210.72+60) .. (291.68,210.72+60) .. controls (293.22,210.72+60) and (294.47,209.29+60) .. (294.47,207.53+60) .. controls (294.47,205.77+60) and (293.22,204.34+60) .. (291.68,204.34+60) .. controls (290.14,204.34+60) and (288.9,205.77+60) .. (288.9,207.53+60) -- cycle ;
\draw [fill={rgb, 255:red, 173; green, 216; blue, 230}, fill opacity=1] (199.17,207.53+60) .. controls (199.17,209.29+60) and (200.42,210.72+60) .. (201.96,210.72+60) .. controls (203.49,210.72+60) and (204.74,209.29+60) .. (204.74,207.53+60) .. controls (204.74,205.77+60) and (203.49,204.34+60) .. (201.96,204.34+60) .. controls (200.42,204.34+60) and (199.17,205.77+60) .. (199.17,207.53+60) -- cycle ;

\draw (243.29,107.07+60) node [anchor=north west][inner sep=0.75pt]   [align=left] {{\scriptsize $1$}};
\draw (200.77,165.67+60) node [anchor=north west][inner sep=0.75pt]   [align=left] {{\scriptsize $2$}};
\draw (184.94,205.77+60) node [anchor=north west][inner sep=0.75pt]   [align=left] {{\scriptsize $3$}};
\draw (241.88,169.35+60) node [anchor=north west][inner sep=0.75pt]   [align=left] {{\scriptsize $4$}};
\draw (277.98,164.77+60) node [anchor=north west][inner sep=0.75pt]   [align=left] {{\scriptsize $5$}};
\draw (292.84,206.21+60) node [anchor=north west][inner sep=0.75pt]   [align=left] {{\scriptsize $6$}};

\end{tikzpicture}
        \label{fig:pascal}
    \end{subfigure}
\caption{(Left) Three concurrent lines; (Right) Quadrilateral set.}
    \label{fig:combined}
\end{figure}
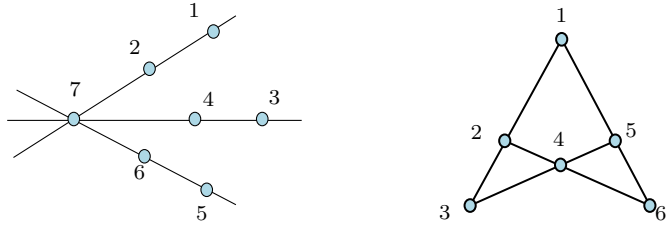

A notable subclass of point-line configurations is formed by the configurations $(v_{r},b_{k})$; see~e.g.~\cite{gropp1997configurations}. 

\begin{definition}\label{configuration}
A $(v_r, b_k)$ {\em configuration} 
is a point-line configuration such that: 
\begin{itemize}
\item There are $v$ points and $b$ lines.
\item Each line contains $k$ points, and each point lies on $r$ lines.
\end{itemize}
When $v=b$ (and hence $r=k$), the configuration is called symmetric and is denoted by $v_{k}$.   
\end{definition}

\begin{example}
The quadrilateral set $\text{QS}$ in Figure~\ref{fig:combined} (Right) is a $(6_{2},4_{3})$ configuration, since every line contains $3$ points and each point lies on $2$ lines.
\end{example}

A particular subfamily of the configurations $(v_{r},b_{k})$ consists of those derived from a {\em Steiner system}. 

\begin{example}
A {\em Steiner system} with parameters $t,k,n$, denoted $S(t,k,n)$, consists of an $n$-element set $S$ together with a collection of $k$-element subsets of $S$ known as {\em blocks}, such that each $t$-element subset is contained in exactly one block.

Each Steiner system $S(2,k,n)$ represents a configuration $(v_{r},b_{k})$ where $S$ is the ground set and the blocks are the lines. Thus, the configurations $(v_r, b_k)$ generalize Steiner systems with $t=2$.
\end{example}

We study two families of point-line configurations from \cite{Fatemeh5} that play a key role in this note.

\begin{definition}\normalfont 
For a point-line configuration $M$ on $[d]$, we define 
\[S_{M}=\{p\in [d]: \size{\mathcal{L}_{p}}\geq 2\},\quad \text{and} \quad Q_{M}=\{p\in [d]: \size{\mathcal{L}_{p}}\geq 3\}.\] 
We then consider the following chains of submatroids of $M$: 
\begin{equation*}
\begin{aligned}
&M_{0}=M, \ \quad M_{1}=S_{M},\quad \text{and }\quad M_{j+1}=S_{M_{j}} \quad \ \text{ for all $j\geq 1$.}\\
& M^{0}=M,\quad M^{1}=Q_{M},\quad \text{and }\quad M^{j+1}=Q_{M^{j}} \quad \text{ for all $j\geq 1$.}
\end{aligned}
\end{equation*}
We say that $M$ is {\em nilpotent} if $M_{j}=\emptyset$ for some $j$, and 
{\em solvable} if $M^{j}=\emptyset$ for some $j$.
\end{definition}

\subsection{Realization spaces of matroids, circuit and matroid varieties}

We begin by recalling the realization spaces of matroids, followed by circuit and matroid varieties.  
\begin{definition}\normalfont
Let $M$ be a matroid of rank $n$ on the ground set $[d]$. A realization of $M$ is a collection of vectors $\gamma=\{\gamma_{1},\ldots,\gamma_{d}\}\subset \CC^{n}$ such that
\[\{\gamma_{i_{1}},\ldots,\gamma_{i_{p}}\}\ \text{is linearly dependent} \Longleftrightarrow \{i_{1},\ldots,i_{p}\} \ \text{is a dependent set of $M$.}\]
The {\em realization space} of $M$ is defined as 
$\Gamma_{M}=\{\gamma\subset \CC^{n}: \gamma \ \text{is a realization of $M$}\}$.
Each element of $\Gamma_{M}$ corresponds to an $n\times d$ matrix over $\CC$. A matroid is called realizable if its realization space is non-empty. The {\em matroid variety} $V_M$ of $M$ is defined as the Zariski closure in $\CC^{nd}$ of  $\Gamma_M$.
\end{definition}

\begin{definition}\normalfont\label{cir}
Let $M$ be a matroid of rank $n$ on the ground set $[d]$.
We say that $\gamma$, a collection of vectors of $\CC^{n}$ indexed by $[d]$, includes the dependencies of $M$ if it satisfies:
\[\set{i_{1},\ldots,i_{k}}\  \text{is a dependent set of $M$} \Longrightarrow \set{\gamma_{i_{1}},\ldots,\gamma_{i_{k}}}\ \text{is linearly dependent}. \] 
The {\em circuit variety} of $M$ is defined as \[V_{\mathcal{C}(M)}=\{\gamma:\text{$\gamma$ includes the dependencies of $M$}\}.\]
\end{definition}

We recall the following result from \cite{Fatemeh5,Fatemeh4} on nilpotent and solvable point-line configurations, which will be used in the subsequent sections.  
\begin{theorem}\label{nil coincide}
Let $M$ be a point-line configuration on $[d]$. 
\begin{itemize}
\item[{\rm (i)}] If $M$ is nilpotent and has no points of degree greater than two, then $V_{\mathcal{C}(M)}=V_{M}$. 
\item[{\rm (ii)}] If $M$ has no points of degree greater than two, and every proper submatroid is nilpotent, then $V_{\mathcal{C}(M)}=V_{M}\cup V_{U_{2,d}}$.
Here, $U_{n, d}$ is the uniform matroid of rank $n$ on the ground set $[d]$.

\item[{\rm (iii)}] If $M$ is solvable, then $V_{M}$ is either irreducible or empty.
\end{itemize}
\end{theorem}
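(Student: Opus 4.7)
The plan is to prove the three parts by induction on the nilpotency and solvability indices, respectively, with the easy containment $V_M\subseteq V_{\mathcal{C}(M)}$ being immediate throughout. For (i), I would induct on the smallest integer $k$ with $M_k=\emptyset$. The case $k=0$ is vacuous, and for the inductive step, the submatroid $M_1=S_M$ is nilpotent of strictly smaller index, so the inductive hypothesis yields $V_{\mathcal{C}(M_1)}=V_{M_1}$. Given $\gamma\in V_{\mathcal{C}(M)}$, its restriction to $S_M$ lies in $V_{\mathcal{C}(M_1)}$ and hence is the limit of a sequence of realizations of $M_1$. I then extend each such realization to a realization of $M$ by placing each point $p\in [d]\setminus S_M$, which has degree at most one in $M$, generically close to $\gamma_p$ inside the two-dimensional subspace spanned by the other points on its line (or freely if $p$ is on no line). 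A mild genericity argument rules out accidental dependencies.

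For (ii), the inclusion $V_M\cup V_{U_{2,d}}\subseteq V_{\mathcal{C}(M)}$ follows because any family of $d$ vectors of rank at most two makes every three-element subset linearly dependent, automatically satisfying the circuit conditions of a simple rank-three matroid. Conversely, given $\gamma\in V_{\mathcal{C}(M)}$, I split into cases according to the rank of $\gamma$. If the rank is at most two, then $\gamma\in V_{U_{2,d}}$ directly. If it is three, I fix any point $p$; by hypothesis the proper submatroid $M\setminus\{p\}$ is nilpotent, and it still has no points of degree greater than two since removing $p$ can only decrease degrees. Hence part (i) gives $\gamma|_{[d]\setminus\{p\}}\in V_{M\setminus\{p\}}$. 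Approximating by realizations and then choosing the $p$-th vector close to $\gamma_p$ in the intersection of the (at most two) planes spanned by the other points on the lines through $p$ produces a sequence of realizations of $M$ converging to $\gamma$; the rank-three assumption ensures this intersection is positive-dimensional and contains $\gamma_p$.

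For (iii), I would induct on the solvability index. In the base case $M^1=Q_M=\emptyset$, every point has degree at most two, and $\Gamma_M$ can be parameterized explicitly by first choosing a two-dimensional subspace for each line compatibly with the degree-two incidences, and then freely placing each remaining point of degree zero or one in general position or on the relevant plane. This exhibits $V_M$ as the closure of the image of an irreducible parameter space, hence irreducible. For the inductive step, $M^1$ is solvable of strictly smaller index, so $V_{M^1}$ is irreducible by the inductive hypothesis, and $V_M$ is obtained as the closure of the image of a morphism from an irreducible variety, namely $\Gamma_{M^1}$ extended by free parameters corresponding to the remaining points of degree at most two in $M$ relative to $M^1$.

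The main obstacle will be controlling the approximation arguments in (i) and (ii): one must verify that the inductive extensions of the partial realizations remain close to the original $\gamma$ while simultaneously satisfying all the line incidences of $M$ and avoiding the creation of spurious circuits. This requires a careful genericity argument that balances the constraints imposed by each line through a low-degree point. A secondary technicality in (iii) concerns handling cycles in the incidence structure at the base of the induction, where closing conditions among the planes of the lines must be shown to cut out an irreducible and nonempty variety rather than reducible or empty loci.
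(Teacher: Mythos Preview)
The paper does not actually prove this theorem: it is quoted verbatim from the references \cite{Fatemeh4,Fatemeh5} as background for the decomposition strategy in Section~\ref{strat}. There is therefore no proof in this paper to compare your attempt against.

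That said, your inductive outline is the natural one and is broadly in the spirit of how such results are established. Two remarks on the sketch itself.

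For part (ii), the step where you place $p$ at the intersection of the two approximating planes is more delicate than you indicate. If the two limit planes coincide---that is, if $\gamma$ restricted to $(l_1\cup l_2)\setminus\{p\}$ has rank two---then in the approximating realizations of $M\setminus\{p\}$ the two planes are distinct but both close to a common plane $P$, and their one-dimensional intersection can converge to \emph{any} line in $P$, depending on the chosen sequence. It need not converge to $\operatorname{span}(\gamma_p)$. For the quadrilateral set this degeneration already forces $\operatorname{rank}(\gamma)\le 2$, so the issue does not arise; but for larger configurations satisfying the hypotheses (for instance the $K_{3,3}$-type $(9_2,6_3)$ configuration) the degeneration can occur with $\operatorname{rank}(\gamma)=3$. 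The fix is to show that one can always choose a \emph{different} point $p'$ for which the two planes are distinct in the limit---this uses the hypothesis that every proper submatroid is nilpotent---but it requires a separate argument that you have not supplied.

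For part (iii), your worry about ``closing conditions'' in the base case is unnecessary. When every point has degree at most two, the planes for the lines can be chosen completely freely in $\operatorname{Gr}(2,3)^{|\mathcal{L}|}$: any two $2$-dimensional subspaces of $\mathbb{C}^3$ automatically meet in a line, so the degree-two points are simply placed at these intersections with no compatibility constraint. The realization space is then an open subset of a bundle over the (irreducible, dense open) locus of pairwise distinct planes, and irreducibility follows.
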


\begin{example}\label{ej quad}
Consider the point-line configuration $\text{QS}$ illustrated in Figure~\ref{fig:combined} (Right). Every proper submatroid of $M$ is nilpotent, which implies that $V_{\mathcal{C}(\text{QS})}=V_{\text{QS}}\cup V_{U_{2,6}}$. 
\end{example}


\section{Algorithm for identifying the set of minimal matroids}\label{algorithm}

In this section, we introduce an algorithm for identifying the minimal matroids of a given point-line configuration. Throughout, $M$ denotes a fixed point-line configuration on ground set $[d]$ with lines $\mathcal{L}$, and $\mathcal{D}(M)$ denotes the set of dependencies of $M$. We begin by establishing our notations.

\begin{definition}\label{dependency}
Let $N_{1}$ and $N_{2}$ be matroids on $[d]$. We say that $N_{1}\leq N_{2}$ if $\mathcal{D}(N_{1})\subset \mathcal{D}(N_{2})$. This partial order is referred to as the {\em dependency order} on matroids.  
\end{definition}

\begin{notation}
For a poset $P=(X,\leq)$, we establish the following terminology:

\begin{itemize}
\item For $x\in X$, $P_{>x}$ denotes the set $\{y\in X: y>x\}$. 
\item For a subset $Y\subset X$, $\min\{Y\}$ refers to the set of minimal elements of $Y$.

\end{itemize}
\end{notation}

\begin{definition}
A hypergraph $\Delta$ on the vertex set $[d]$ is defined as a collection of subsets of $[d]$, with the condition that no proper subset of any element in $\Delta$ belongs to $\Delta$. The elements of $\Delta$ are referred to as edges. We say that $\Delta_{1}\leq \Delta_{2}$ if, for each edge $e_{1}\in \Delta_{1}$, there exists an edge $e_{2}\in \Delta_{2}$ such that $e_{1}\subset e_{2}$. 
Let $\mathcal{H}_{\geq 3}$ denote the set of all hypergraphs in which every edge has size at least $3$. Note that if $M$ is a point-line configuration, then its set of lines $\mathcal{L}_{M}$ belongs to $\mathcal{H}_{\geq 3}$. 
\end{definition}

If $\Delta$ is a hypergraph on the vertex set $[d]$, we say that $\Delta\leq N$ if $\Delta\subseteq \mathcal{D}(N)$.  

\begin{definition}\label{minimal matroids}
 Let $\mathcal{M}$ be the poset of matroids on the ground set $[d]$, equipped with the dependency order. We define the set of all minimal matroids of $M$ as: 
\[\min(M)=\min\{\mathcal{M}_{>M}\}.\]
We also fix the following subsets of $\mathcal{M}_{>M}$:
\begin{equation*}
\begin{aligned}
&\mathcal{A}=\{N>M: \text{$\mathcal{C}_{1}(N)=\emptyset$ and $\mathcal{C}_{2}(N)\neq \emptyset$}\},\\
&\mathcal{B}=\{N>M: \text{$\mathcal{C}_{1}(N)=\emptyset$ and $\mathcal{C}_{2}(N)= \emptyset$}\},\\
&\mathcal{C}=\{N>M: \text{$\mathcal{C}_{1}(N)\neq \emptyset$}\},
\end{aligned}
\end{equation*}
where $\mathcal{C}_i(N)$ denotes the set of circuits,~i.e.,~minimal dependencies of size $i$ in $N$. Note that 
$\mathcal{M}_{>M}$ is the disjoint union of $\mathcal{A}$, $\mathcal{B} $ and $\mathcal{C}$, $\mathcal{M}_{>M}=\mathcal{A}\amalg \mathcal{B} \amalg \mathcal{C}$, 
which leads to the following equality: 
\begin{equation}\label{unio}
\min(M)=\min\{\min\{\mathcal{A}\}\cup \min\{\mathcal{B}\}\cup \min\{\mathcal{C}\}\}.
\end{equation}
\end{definition}

In the following subsections, we first present algorithms for computing $\min\{\mathcal{A}\}$, $\min\{\mathcal{B}\}$, and $\min\{\mathcal{C}\}$, and then use them to develop an algorithm for computing $\min(M)$.

\subsection{Determining $\min\{\mathcal{A}\}$}
In this subsection, we present an algorithm to compute $\min\{\mathcal{A}\}$ from Definition~\ref{minimal matroids} and Equation~\eqref{unio}.

\begin{definition}\label{def:formula}
A {\em formula} on the set $[d]$ is a conjunction of atoms of the form $(x\sim y)$ or $(x\not \sim y)$, where $x,y\in [d]$. Explicitly, it can be written as: 
\begin{equation}\label{expr}
F=(a_{1,1}\sim a_{1,2})\ \wedge \cdots \wedge (a_{i,1}\sim a_{i,2}) \ \wedge (b_{1,1}\not \sim b_{1,2})\ \wedge \cdots \wedge (b_{j,1}\not \sim b_{j,2}).
\end{equation}
Let $\Pi$ denote the poset of all formulas on $[d]$, where $F\geq G$ if the atoms of $G$ are contained within those of $F$. Furthermore, define the following subsets:
\begin{equation*}
\begin{aligned}
&\Pi_{\sim}=\{F\in \Pi:\ \text{$F$ is composed solely of $\sim$}\},\\
&\Pi_{\not \sim}=\{F\in \Pi:\ \text{$F$ is composed solely of $\not \sim$}\}.
\end{aligned}
\end{equation*}
Note that each formula $F\in \Pi$ can be decomposed as 
$F=F_{\sim}\wedge F_{\not \sim}$
where $F_{\sim}\in \Pi_{\sim}$ and $F_{\not \sim}\in \Pi_{\not \sim}$. Note that $F_{\sim}$ defines an equivalence relation on $[d]$, and we denote the {\em set of its equivalence classes} by $[d]/F$. For any element $i\in [d]$, we denote its equivalence class by $\overline{i}_{F}$.
Additionally, we assume that these formulas are {\em consistent}, meaning that if $\overline{i}_{F}=\overline{j}_{F}$, then $(i\not \sim j)$ is not an atom of $F$. For a given $F\in \Pi$, as in \eqref{expr}, we define $\text{Reali}_{M}(F)$ as the set of all matroids  $N>M$ such that:
\begin{center}
    $\mathcal{C}_{1}(N)=\emptyset$,\quad $\{a_{k,1},a_{k,2}\}\in \mathcal{C}_{2}(N)$ for all $k\in [i]$,\quad and $\{b_{l,1},b_{l,2}\}\not \in \mathcal{C}_{2}(N)$ for all $l\in [j]$.
\end{center}  
\end{definition}

\begin{example}\label{exi 3}
Consider the point-line configuration $M_{\text{Fano}}$ depicted in Figure~\ref{new figure} (Left) and the formula \[F=(1\sim 2)\wedge (4\not \sim 7).\]
Let $N$ be the matroid of rank three on the ground set $[7]$ with circuits of size at most four, as follows:
\begin{equation*}
\mathcal{C}_{1}(N)=\emptyset, \quad \mathcal{C}_{2}(N)=\{\{i,j\}:\text{$i,j\in \{1,2,6,7\},i\neq j$}\}, \quad \text{and} \quad \mathcal{C}_{3}(N)=\{\{3,4,5\}\}.
\end{equation*}
Since $\{1,2\}\in \mathcal{C}_{2}(N),\{4,7\}\not \in \mathcal{C}_{2}(N)$ and $N>M_{\text{Fano}}$, we conclude that $N\in \text{Reali}_{M_{\text{Fano}}}(F)$.
\end{example}

We begin with a series of lemmas and definitions essential for outlining 
the algorithm to identify $\min\{\mathcal{A}\}$.

\begin{lemma}\label{unique 0}
Let $\Delta \in \mathcal{H}_{\geq 3}$ and $F\in \Pi_{\not \sim}$. There exists a unique hypergraph $\Delta_{F}$ that is minimal among those satisfying the following conditions:
\begin{itemize}
\item $\Delta\leq \Delta_{F}$.
\item There is no atom $(c_{1}\not \sim c_{2})$ in $F$ and no 
distinct edges $e_{1},e_{2}\in \Delta_{F}$ such that $\{c_{1},c_{2}\}\subset e_{1}\cap e_{2}$.
\end{itemize}
\end{lemma}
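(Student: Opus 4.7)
The plan is to construct $\Delta_F$ explicitly via an iterative merging procedure on the edges of $\Delta$, and then to show that the result is actually the unique \emph{minimum} (not merely a minimal element) among all hypergraphs $\Delta' \geq \Delta$ satisfying the forbidden-pair condition, which immediately gives the statement.

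\textbf{Construction and feasibility.} I would start with the trivial partition of $\Delta$ in which each edge is its own class, and iteratively merge any two classes $C_1, C_2$ for which $\bigcup C_1 \cap \bigcup C_2$ contains a forbidden pair $\{c_1, c_2\}$, i.e.\ a pair with $(c_1 \not\sim c_2)$ an atom of $F$. Since each merge strictly reduces the number of classes and the total is bounded by $|\Delta|$, the process terminates. Define $\Delta_F$ to be the collection of those unions $\bigcup C$, for $C$ a terminal class, that are maximal with respect to inclusion. Then $\Delta \leq \Delta_F$ is immediate since each $e \in \Delta$ lies in its class $C$ with $\bigcup C$ contained in some edge of $\Delta_F$; and the forbidden-pair condition holds because the termination criterion is precisely that no two terminal class-unions share a forbidden pair, and discarding non-maximal class-unions cannot create new shared pairs between the remaining ones.

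\textbf{Minimality and uniqueness.} Given any hypergraph $\Delta' \geq \Delta$ satisfying condition 2, I would prove by induction on the iteration step that at every stage each current class $C$ is contained in a single edge of $\Delta'$. The base case follows from $\Delta \leq \Delta'$. For the inductive step, suppose classes $C_1 \subset E_1 \in \Delta'$ and $C_2 \subset E_2 \in \Delta'$ are merged because $\{c_1, c_2\} \subset \bigcup C_1 \cap \bigcup C_2$ with $(c_1 \not\sim c_2)$ an atom of $F$. Then $\{c_1, c_2\} \subset E_1 \cap E_2$, so condition 2 applied to $\Delta'$ forces $E_1 = E_2$, and the merged class is still contained in a single edge of $\Delta'$. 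Reading this conclusion at the terminal stage yields $\Delta_F \leq \Delta'$, so $\Delta_F$ is the unique minimum, and hence \emph{a fortiori} the unique minimal such hypergraph.

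\textbf{Main obstacle.} The only delicate point is ensuring that the iterative construction is well-defined, i.e.\ that its output is independent of the order in which available merges are performed. The cleanest way to bypass this is to define $\sim$ non-iteratively as the smallest equivalence relation on $\Delta$ closed under the rule ``if $\{c_1, c_2\} \subset \bigcup [e_1] \cap \bigcup [e_2]$ with $(c_1 \not\sim c_2)$ an atom of $F$, then $[e_1] = [e_2]$''; existence of such a smallest relation follows by a standard fixed-point argument since the property is closed under arbitrary intersection of equivalence relations, and the inductive argument above then applies verbatim, avoiding any explicit confluence analysis.
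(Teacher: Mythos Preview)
Your proof is correct and follows essentially the same approach as the paper: iteratively merge edges whose intersection contains a forbidden pair, and show inductively that any hypergraph $\Delta'$ satisfying both conditions must dominate each intermediate stage, so the terminal hypergraph is the unique minimum. The paper phrases this as a chain of hypergraphs $\Delta \leq \Delta_2 \leq \cdots$ rather than via equivalence classes on $\Delta$, and it does not explicitly address order-independence of the merges (your extra care there is welcome but not a different idea, since uniqueness of the minimum already forces it).
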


\begin{proof}
Let $\widetilde{\Delta}$ be a hypergraph that satisfies both conditions.
Suppose $e_{1},e_{2}\in \Delta$ are edges such that $e_{1}\cap e_{2}$ contains an atom of $F$. By hypothesis, some edge of $\widetilde{\Delta}$ must include $e_{1}\cup e_{2}$.

Let $\Delta_{2}$ be the hypergraph obtained by adding $e_{1}\cup e_{2}$ to $\Delta$ and removing all the edges of $\Delta$ that are contained within $e_{1}\cup e_{2}$. From the earlier observation, it follows that $\widetilde{\Delta}$ satisfies $\Delta\leq \widetilde{\Delta}$ if and only if $ \Delta_{2}\leq \widetilde{\Delta}$. Repeating this process, we construct a chain of hypergraphs: 
\[\Delta\leq \Delta_{2}\leq \cdots,\]
where $\widetilde{\Delta}$ satisfies $\Delta \leq \widetilde{\Delta}$ if and only if $\Delta_{j}\leq \widetilde{\Delta}$ for any $j\geq 1$. 

Since $[d]$ is finite, this sequence of hypergraphs must eventually stabilize. Therefore, there exists $k>0$ such that $\Delta_{k}=\Delta_{k+1}$. At this point, it follows that $e_{1}\cap e_{2}$ does not include an atom of $F$ for any $e_{1},e_{2}\in \Delta_{k}$. Thus, $\Delta_{k}$ is the desired hypergraph.
\end{proof}

\begin{example}
Consider the hypergraph $\Delta=\{\{1,2,3\},\{2,3,4\},\{1,4,5\},\{1,5,6\}\}$ on $[6]$ and the formula $F=(2\not \sim 3)\wedge (1\not \sim 4)$. Under this formula, we have $\Delta_{F}=\{\{1,2,3,4,5\},\{1,5,6\}\}$.
\end{example}

Following the reasoning in Lemma~\ref{unique 0}, we propose the following algorithm to determine $\Delta_{F}$.

\begin{algorithm}\label{delta p}
Let $\Delta \in \mathcal{H}_{\geq 3}$ and $F\in \Pi_{\not \sim}$. To determine $\Delta_{F}$, proceed as follows:

\medskip
{\bf Initialization:} Begin with the hypergraph $\Delta_{1}=\Delta$.

\medskip
{\bf Recursion:} At the step $j$, consider the hypergraph $\Delta_{j}$ and perform the following:

\begin{itemize}
\item If there is no atom $(c_{1}\not \sim c_{2})$ in $F$ and no distinct edges $e_{1},e_{2}\in \Delta_{j}$ such that $\{c_{1},c_{2}\}\subset e_{1}\cap e_{2}$, set $\Delta_{j}=\Delta_{F}$ and terminate the process.
\item If there exists an atom $(c_{1}\not \sim c_{2})$ in $F$ and distinct edges $e_{1},e_{2}\in \Delta_{j}$ such that $\{c_{1},c_{2}\}\subset e_{1}\cap e_{2}$, define $\Delta_{j+1}$ as the hypergraph obtained by adding $e_{1}\cup e_{2}$ to $\Delta_{j}$, removing all the edges contained within $e_{1}\cup e_{2}$, and continue the process.
\end{itemize}

{\bf Termination:} By Lemma~\ref{unique 0}, this process concludes with $\Delta_{F}$.

\end{algorithm}

By applying Lemma~\ref{unique 0}, we obtain the following corollary.

\begin{lemma}\label{unique}
Let $\Delta\in \mathcal{H}_{\geq 3}$. There exists a unique point-line configuration $N$ on the ground set $[d]$ that is minimal with respect to the condition $N\geq \Delta$. 
\end{lemma}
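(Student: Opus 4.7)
The plan is to reduce Lemma~\ref{unique} to Lemma~\ref{unique 0} applied to the subfamily of $3$-element edges of $\Delta$ under the formula forbidding every pair of vertices. Set $\Delta_3 := \{e \in \Delta : |e|=3\} \in \mathcal{H}_{\geq 3}$ and $F := \bigwedge_{1 \leq i < j \leq d}(i \not \sim j) \in \Pi_{\not \sim}$. By Lemma~\ref{unique 0}, there is a unique minimal hypergraph $(\Delta_3)_F \in \mathcal{H}_{\geq 3}$ with $\Delta_3 \leq (\Delta_3)_F$ in which no two distinct edges share two or more points. I would then define $N$ to be the simple matroid on $[d]$ of rank at most $3$ whose set of lines (rank-$2$ flats of size $\geq 3$) is precisely $(\Delta_3)_F$; the pairwise-intersection property is exactly the axiom required for these sets to form the lines of a point-line configuration, the dependent $3$-subsets of $N$ are those contained in some edge of $(\Delta_3)_F$, and every subset of size at least $4$ is dependent by the rank bound.

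To verify $N \geq \Delta$, every $e \in \Delta_3$ is contained in some edge of $(\Delta_3)_F$ and hence in a line of $N$, while every $e \in \Delta$ with $|e|\geq 4$ is dependent because $\rank(N) \leq 3$. For minimality, let $N'$ be any point-line configuration with $N' \geq \Delta$. Since each $e \in \Delta_3$ is a dependent triple in the simple matroid $N'$, it is itself a $3$-circuit and therefore lies inside a unique line of $N'$. I would then run Algorithm~\ref{delta p} on $\Delta_3$ and prove by induction on the merge steps that each intermediate component lies entirely inside some line of $N'$: if two current components individually sitting inside lines of $N'$ are merged because they share two points, those two ambient lines share those two points and must coincide by the point-line axiom of $N'$. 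Therefore every line $\ell$ of $N$, being an edge of $(\Delta_3)_F$, is contained in some line of $N'$, so every dependent triple of $N$ is a dependent triple of $N'$. Combined with the automatic dependence of every $\geq 4$-subset in $N'$, this gives $\mathcal{D}(N) \subseteq \mathcal{D}(N')$, i.e.\ $N \leq N'$, so $N$ is the unique minimum, hence the unique minimal element.

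The main obstacle is the inductive argument in the minimality step: one must track Algorithm~\ref{delta p} carefully to ensure that at every intermediate stage, each partially merged cluster really does sit inside a single line of $N'$, exploiting at each merge the fact that two lines of $N'$ through two common points must coincide. A secondary technical point is confirming that $(\Delta_3)_F$ truly defines a valid rank-$\leq 3$ simple matroid via its lines, which follows from the standard bijection between simple rank-$3$ matroids and families of $\geq 3$-subsets satisfying the pairwise-intersection axiom.
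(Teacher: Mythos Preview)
Your approach is the paper's: apply Lemma~\ref{unique 0} with the formula $F=\bigwedge_{i\neq j}(i\not\sim j)$ and take the edges of the resulting hypergraph as the lines of $N$ (the paper feeds in $\Delta$ rather than your $\Delta_3$, but your restriction is harmless since edges of size $\geq 4$ are automatically dependent in any rank-$\leq 3$ matroid). Your inductive tracking of Algorithm~\ref{delta p} for the minimality step is correct but redundant: once you observe that $\mathcal{L}_{N'}$ itself satisfies both conditions of Lemma~\ref{unique 0}---each $e\in\Delta_3$ is a $3$-circuit of the simple matroid $N'$ and hence lies in some line, and distinct lines of $N'$ meet in at most one point---the minimality clause of that lemma immediately gives $(\Delta_3)_F\leq\mathcal{L}_{N'}$, whence $N\leq N'$, which is exactly the paper's one-line appeal to ``the minimality condition on $\Delta_F$''.
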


\begin{proof}
Applying Lemma~\ref{unique} to the formula
$\textstyle{F=\bigwedge_{i\neq j\in [d]} (i\not \sim j)}$,
we conclude that there exists a unique hypergraph $\Delta_{F}$ that is minimal with respect to the following conditions:
\begin{itemize}
\item $\Delta \leq \Delta_{F}$.
\item Any two distinct edges $e_{1},e_{2}\in \Delta_{F}$ satisfy $\size{e_{1}\cap e_{2}}\leq 1$.
\end{itemize}
Thus, by the second condition, $\Delta_{F}$ defines the lines of a point-line configuration $N$. Moreover, due to the minimality condition on $\Delta_{F}$, $N$ is the minimal point-line configuration satisfying $N\geq \Delta$.
\end{proof}

\begin{example}
Consider the hypergraph $\Delta=\{\{1,2,3\},\{1,5,6\},\{1,2,5\},\{6,7,8\},\{6,7,9,10\}\}$ on $[10]$. The minimal point-line configuration $N$ on $[10]$ that satisfies $N\geq \Delta$ has lines
\[\mathcal{L}_{N}=\{\{1,2,3,5,6\},\{6,7,8,9,10\}\}.\]
\end{example}

\begin{definition}\label{new def}
Let $\Delta \in \mathcal{H}_{\geq 3}$ and $F\in \Pi_{\sim}$. We define the hypergraph $\Delta_{F}\in \mathcal{H}_{\geq 3}$ on the vertex set $[d]/F$, the set of equivalence classes as in Definition~\ref{def:formula}, as follows:
\[\Delta_{F}=\{\{\overline{i}_{F}: i\in e\}: \text{$e\in \Delta$ and $\size{\{\overline{i}_{F}: i\in e\}}\geq 3$}\}.\]
\end{definition}

We now generalize Lemma~\ref{unique 0} and Definition~\ref{new def} that enables us to define $\Delta_{F}$ for any $F\in \Pi$.

\begin{lemma}\label{unique 35}
Let $\Delta\in \mathcal{H}_{\geq 3}$ and $F\in \Pi$. There exists a unique minimal hypergraph $\Delta_{F}$ on the ground set $[d]/F$, the set of equivalence classes as in Definition~\ref{def:formula}, satisfying the following conditions:
\begin{itemize}
\item For any $e\in \Delta$ with $\size{\{\overline{i}_{F}: i\in e\}}\geq 3$, there exists $e\rq \in \Delta_{F}$ satisfying  $\{\overline{i}_{F}: i\in e\}\subset e\rq$. 
\item No atom $(c_{1}\not \sim c_{2})$ in $F_{\not \sim}$ and no pair $e_{1},e_{2}\in \Delta_{F}$ of distinct edges 
 satisfy $\{\overline{c_{1}}_{F},\overline{c_{2}}_{F}\}\subset e_{1}\cap e_{2}$.
\end{itemize}
Furthermore, we have 
$\Delta_{F}=(\Delta_{F_{\sim}})_{F_{\not\sim}}$.
\end{lemma}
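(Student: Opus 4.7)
The plan is to construct $\Delta_F$ explicitly as the two-step composition $(\Delta_{F_\sim})_{F_{\not\sim}}$, using the already established Lemmas for the case of pure $\Pi_\sim$ formulas (Definition~\ref{new def}) and pure $\Pi_{\not\sim}$ formulas (Lemma~\ref{unique 0}), and then to verify that this object satisfies both existence and uniqueness as stated.

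First, I would form $\Delta_{F_\sim}$ as in Definition~\ref{new def}. This is a hypergraph on $[d]/F_\sim = [d]/F$ whose edges are precisely the nontrivial images $\{\overline{i}_F : i\in e\}$ of edges $e\in\Delta$ for which this image has size at least $3$. Next, I would interpret $F_{\not\sim}$ as a formula in $\Pi_{\not\sim}$ on the new ground set $[d]/F$: by the consistency hypothesis on $F$, every atom $(b_1\not\sim b_2)$ in $F_{\not\sim}$ satisfies $\overline{b_1}_F\neq\overline{b_2}_F$, so the pair $\{\overline{b_1}_F,\overline{b_2}_F\}$ is a legitimate pair of distinct vertices of $[d]/F$. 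This allows us to invoke Lemma~\ref{unique 0} on the hypergraph $\Delta_{F_\sim}$ with formula $F_{\not\sim}$, producing a canonical minimal hypergraph which we set as
\[
\Delta_F \;:=\; (\Delta_{F_\sim})_{F_{\not\sim}}.
\]

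To check existence, I would verify that $\Delta_F$ satisfies the two conditions of the lemma. The first condition is immediate: for any $e\in\Delta$ with $|\{\overline{i}_F:i\in e\}|\geq 3$, the set $\{\overline{i}_F:i\in e\}$ is an edge of $\Delta_{F_\sim}$ by construction, and since $\Delta_{F_\sim}\leq (\Delta_{F_\sim})_{F_{\not\sim}}$ by Lemma~\ref{unique 0}, some edge of $\Delta_F$ contains it. The second condition is exactly the conclusion of Lemma~\ref{unique 0} applied to $F_{\not\sim}$ on $[d]/F$.

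For uniqueness and minimality, I would take any hypergraph $\widetilde{\Delta}$ on $[d]/F$ satisfying both conditions and show $\Delta_F\leq\widetilde{\Delta}$. The first condition on $\widetilde{\Delta}$ tells us that every edge of $\Delta_{F_\sim}$ is contained in some edge of $\widetilde{\Delta}$, i.e.\ $\Delta_{F_\sim}\leq\widetilde{\Delta}$. The second condition on $\widetilde{\Delta}$ is the separation property appearing in Lemma~\ref{unique 0}. Applying the uniqueness half of Lemma~\ref{unique 0} to $\widetilde{\Delta}$ relative to $\Delta_{F_\sim}$ and $F_{\not\sim}$ yields $(\Delta_{F_\sim})_{F_{\not\sim}}\leq\widetilde{\Delta}$, i.e.\ $\Delta_F\leq\widetilde{\Delta}$. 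This simultaneously proves minimality and uniqueness, and establishes the identity $\Delta_F = (\Delta_{F_\sim})_{F_{\not\sim}}$.

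The main potential obstacle is verifying the legitimacy of the reduction: namely, that interpreting $F_{\not\sim}$ on the quotient $[d]/F$ genuinely preserves the meaning of the original two conditions, and that the order of operations (first $F_\sim$, then $F_{\not\sim}$) is the right one. Concretely, one has to make sure that a non-separation witness $\{\overline{c_1}_F,\overline{c_2}_F\}\subset e_1\cap e_2$ at the quotient level corresponds bijectively to the offending configurations in the statement, which relies on consistency ruling out $\overline{c_1}_F=\overline{c_2}_F$. Once this bookkeeping is settled, the proof reduces cleanly to invoking the two earlier constructions in sequence.
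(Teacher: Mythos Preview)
Your proposal is correct and follows essentially the same approach as the paper: observe that the first condition is equivalent to $\Delta_{F_{\sim}}\leq\widetilde{\Delta}$, then invoke Lemma~\ref{unique 0} on $\Delta_{F_{\sim}}$ with the formula $F_{\not\sim}$ over $[d]/F$ to obtain $(\Delta_{F_{\sim}})_{F_{\not\sim}}$ as the unique minimal object. Your write-up is in fact more explicit than the paper's on the consistency bookkeeping (ensuring $\overline{c_1}_F\neq\overline{c_2}_F$) and on separately verifying existence, but the underlying argument is identical.
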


\begin{proof}
Let $\widetilde{\Delta}$ be a hypergraph satisfying both conditions.
From Definition~\ref{new def}, we note that the first  condition is equivalent to 
$\textstyle{\Delta_{F_{\sim}} \leq \widetilde{\Delta}}$.
By Lemma~\ref{unique 0}, there exists a unique hypergraph $\Delta_{F}$ on the ground set $[d]/F$ that is minimal with respect to the following conditions:
\begin{itemize}
\item $\Delta_{F_{\sim}} \leq \Delta_{F}$.
\item There is no atom $(c_{1}\not \sim c_{2})$ in $F_{\not \sim}$ and no distinct edges $e_{1},e_{2}\in \Delta_{F}$ such that $\{\overline{c_{1}}_{F},\overline{c_{2}}_{F}\}\subset e_{1}\cap e_{2}$.
\end{itemize}
Moreover, by definition, we have
$\Delta_{F}=(\Delta_{F_{\sim}})_{F_{\not\sim}}$.
\end{proof}

\begin{remark}\label{recursive}
By applying Lemma~\ref{unique 35}, we can recursively compute $\Delta_{F}$ for $\Delta\in \mathcal{H}_{\geq 3}$ and $F\in \Pi$. Specifically, we have:
\begin{itemize}
\item If $F=F\rq \wedge (x\not \sim y)$, then
$\Delta_{F}=(\Delta_{F\rq})_{F_{\not \sim}}$.
\item If $F=F\rq \wedge (x\sim y)$, then
$\Delta_{F}=((\Delta_{F\rq})_{(x\sim y)})_{F_{\not \sim}}$.
\end{itemize}
\end{remark}

\begin{example}\label{exi}
Let $\Delta=\{\{1,2,3\},\{1,5,6\},\{1,4,7\},\{2,5,7\},\{2,4,6\},\{3,4,5\},\{3,6,7\}\}$ be the hypergraph, whose edges are the lines of the 
configuration $M_{\text{Fano}}$ in Figure~\ref{new figure} (Left). Let $F$ be the formula $(1\sim 2)\wedge (1\not \sim 5)\wedge (1\not \sim 4)$. Following the steps of Remark~\ref{recursive}, we compute $\Delta_{F}$ as follows:
\begin{itemize}
\item We identify $1$ and $2$, yielding the hypergraph
\[\Delta_{(1\sim 2)}=\{\{1,5,6\},\{1,4,7\},\{1,5,7\},\{1,4,6\},\{3,4,5\},\{3,6,7\}\}.\]
\item Next, we merge $\{1,5,6\}$ and $\{1,5,7\}$, obtaining the hypergraph
\[\Delta_{(1\sim 2)\wedge (1\not \sim 5)}=\{\{1,5,6,7\},\{1,4,7\},\{1,4,6\},\{3,4,5\},\{3,6,7\}\}.\]
\item Finally, we merge $\{1,4,7\}$ and $\{1,4,6\}$, resulting in the hypergraph
\[\Delta_{F}=\{\{1,5,6,7\},\{1,4,6,7\},\{3,4,5\},\{3,6,7\}\}.\]
\end{itemize}
\end{example}

We now present a fundamental property that applies to certain elements $F\in \Pi$. 

\begin{definition}\label{property X}
For a point-line configuration $M$ with the set of lines $\mathcal{L}$, considered as a hypergraph, we say that a formula $F\in \Pi$ has the property $X$ if:
\begin{itemize}
\item For any distinct pair $e_{1},e_{2}\in (\mathcal{L})_{F}$, it holds that $\size{e_{1}\cap e_{2}}\leq 1$. 
\end{itemize}
For a fixed $M$, the set of all formulas $F\in \Pi$ satisfying this condition is denoted by $\Pi_{X}$.
\end{definition}

\begin{example}\label{exi 2}
Consider the hypergraph $\Delta$ in Example~\ref{exi} and the matroid $M_{\text{Fano}}$. For the formula \[F=(1\sim 2)\wedge (1\sim 6)\wedge (1\sim 7),\] we obtain $\Delta_{F}=\{\{3,4,5\}\}$, which implies that $F\in \Pi_{X}$.
\end{example}

With the notation above, we have the following lemma for the formulas in $\Pi_{X}$.

\begin{lemma}\label{unico}
For 
each $F\in \Pi_{X}$, there exists a unique minimal matroid in $\textup{Reali}_{M}(F)$. 
\end{lemma}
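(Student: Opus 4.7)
The plan is to construct the minimum of $\textup{Reali}_M(F)$ explicitly rather than argue abstractly. Since $F \in \Pi_X$, the hypergraph $(\mathcal{L}_M)_F$ on the quotient $[d]/F$ has pairwise intersections of size at most one, so it itself serves as the line set of a point-line configuration $\bar{N}_F$ on $[d]/F$. Pull $\bar{N}_F$ back to $[d]$ by declaring two elements parallel exactly when they lie in the same $F_\sim$-class; this defines a rank $\leq 3$ matroid $N_F$ on $[d]$. I will show that $N_F$ is the unique minimum of $\textup{Reali}_M(F)$, which in particular makes it the unique minimal element.

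The first step is to verify $N_F \in \textup{Reali}_M(F)$. The parallel pairs of $N_F$ are exactly the pairs within $F_\sim$-classes, so every $\sim$ atom is realized; consistency of $F$ (Definition~\ref{def:formula}) ensures every $\not\sim$ atom is avoided. For $N_F > M$, each dependent set $D$ of $M$ sits inside some line $\ell$ of $M$, which projects in $[d]/F$ either into a single equivalence class (whence every pair in $\ell$ is parallel in $N_F$, so $D$ is dependent) or onto an edge of $(\mathcal{L}_M)_F$ of size $\geq 3$ (so $\ell$ and hence $D$ is contained in a line of $N_F$).

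The crux is to prove $N_F \leq N$ for every $N \in \textup{Reali}_M(F)$, which simultaneously yields existence of a minimum and uniqueness. The size-$\leq 2$ dependencies of $N_F$ are forced on $N$ by the $\sim$ atoms together with transitivity of parallelism in any matroid. For larger dependencies it suffices to show that every line of $N_F$ is contained in a single line of $N$. By Lemma~\ref{unique 35}, each edge $e$ of $(\mathcal{L}_M)_F$ arises via a chain of amalgamations of projected lines from some subcollection $\mathcal{L}_e \subseteq \mathcal{L}_M$, each amalgamation triggered by two previously formed edges that share two vertices in $[d]/F$. I would argue by induction along this chain that whenever $\ell_1, \ell_2 \in \mathcal{L}_M$ belong to a common amalgamation, $\ell_1 \cup \ell_2$ lies inside a single line of $N$: the two shared vertices of $[d]/F$ come from pairs $(a_1, a_2), (b_1, b_2)$ in $[d]$ with $a_1 \parallel a_2$ and $b_1 \parallel b_2$ in $N$ (with equalities allowed), and in the simplification of $N$ both $\bar{\ell_1}$ and $\bar{\ell_2}$ are rank-$2$ flats containing the two distinct points $\bar{a}, \bar{b}$, so each coincides with the closure of $\{\bar{a}, \bar{b}\}$, forcing $\ell_1 \cup \ell_2$ into a common line of $N$.

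The main obstacle is this inductive merging argument, since one must track the recursive construction in Lemma~\ref{unique 35} carefully and handle the borderline cases where $a_1 = a_2$ or $b_1 = b_2$ separately (noting that both equalities simultaneously would violate that $\ell_1 \neq \ell_2$ are lines of the point-line configuration $M$). Property $X$ is precisely the hypothesis ensuring that the amalgamation chain terminates with $(\mathcal{L}_M)_F$ itself still having pairwise intersections of size at most one, so that the lift $N_F$ is well-defined as a point-line configuration and no further merging is required; without this condition the candidate minimum would collapse, explaining why $\Pi_X$ is the natural domain of the statement.
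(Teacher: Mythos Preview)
Your proposal is correct and follows essentially the same route as the paper: both construct the candidate matroid by pulling back the point-line configuration with line set $(\mathcal{L})_F$ from $[d]/F$ to $[d]$ (the paper writes out its size-$2$ and size-$3$ circuits explicitly, which is exactly your $N_F$), then verify membership in $\textup{Reali}_M(F)$ and argue minimality. The only difference is in presentation of the last step: the paper compresses minimality into a one-line appeal to the minimality of $(\mathcal{L})_F$ established in Lemma~\ref{unique 35}, whereas you unfold that appeal into an explicit induction along the amalgamation chain, which is precisely the content lurking behind the paper's sentence and in fact shows the stronger statement that $N_F$ is the minimum (not merely a minimal element).
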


\begin{proof}
By Lemma~\ref{unique 35}, $(\mathcal{L})_{F}$ is the unique minimal hypergraph satisfying
the following conditions:
\begin{itemize}
\item For any $l\in \mathcal{L}$ with $\size{\{\overline{i}_{F}: i\in l\}}\geq 3$, there exists an edge $e \in (\mathcal{L})_{F}$ satisfying  $\{\overline{i}_{F}: i\in l\}\subset e$. 
\item There is no atom $(c_{1}\not \sim c_{2})$ in $F_{\not \sim}$ and no distinct edges $e_{1},e_{2}\in (\mathcal{L})_{F}$ such that \[\{\overline{c_{1}}_{F},\overline{c_{2}}_{F}\}\subset e_{1}\cap e_{2}.\]
\end{itemize}

We define the matroid $N\in \mathcal{A}$ as follows:
\begin{equation*}
\mathcal{C}_{2}(N)=\{\{i,j\}:\text{$\overline{i}_{F}=\overline{j}_{F}$}\}, \quad  \mathcal{C}_{3}(N)=\{\{i,j,k\}: \text{there exists $e\in (\mathcal{L})_{F}$ such that $\{\overline{i}_{F},\overline{j}_{F},\overline{k}_{F}\}\subset e$}\}
\end{equation*}
Since $F\in \Pi_{X}$, the collection $(\mathcal{L})_{F}$ corresponds to the set of lines of a point-line configuration on the ground set $[d]/F$. It is thus straightforward to verify that $N$ is well defined and belongs to $\text{Reali}_{M}(F)$. Furthermore, by the minimality condition on $(\mathcal{L})_{F}$, it follows that $N$ is minimal within $\text{Reali}_{M}(F)$.
\end{proof}

For each $F\in \Pi_{X}$, let $M_{F}$ denote the unique matroid in Lemma~\ref{unico}. 

\begin{example}
Consider the configuration $M_{\text{Fano}}$ from Example~\ref{exi}. For the formula $F$ from Example~\ref{exi 2}, the matroid $N$  from Example~\ref{exi 3} is the unique minimal matroid in $\text{Reali}_{M_{\text{Fano}}}(F)$.
\end{example}

We now present an algorithm to identify $\min\{\mathcal{A}\}$ using a {\em Depth-First Search (DFS)} approach. Recall the definition of a {\em stack}, which is an abstract data type that represents a collection of elements with two operations: {\em push}, which adds an element to the collection and {\em pop} which removes the most recently added element.

\begin{algorithm}\label{algo 2}
Algorithm for computing $\min\{\mathcal{A}\}$ for a point-line configuration $M$ with lines $\mathcal{L}$.

\smallskip
\noindent{\bf Initialization:} 
\noindent Initialize a stack $L$ and push the formulas $(1\not \sim 2)$ and $(1\sim 2)$ in that order. Additionally, create an empty list $Y$.

\smallskip
\noindent{\bf Exploration:} While the stack $L$ is not empty:
\begin{itemize}
\item Visit the top formula of $L$, denote it by $F$.
\item Compute $(\mathcal{L})_{F}$, 
using the stored information from $(\mathcal{L})_{F\rq}$ for any previously visited formula $F\rq$ that is immediately smaller than $F$. Store this information for future reference.
\item Pop $F$ from the stack.
\item If $F\not \in \Pi_{X}$, select two 
elements $x,y\in [d]$ such that there exist 
$e_{1},e_{2}\in (\mathcal{L})_{F}$ satisfying
$\{\overline{x}_{F},\overline{y}_{F}\}\subset e_{1}\cap e_{2}$.
Push onto $L$ the formulas 
$F\wedge (x \not \sim y)$, and $F\wedge (x\sim y)$,
in that order, if they were not already present in $L$ prior this step.
\item If $F\in \Pi_{X}$, add the matroid $M_{F}$ to 
$Y$. 
\end{itemize}

\smallskip
\noindent
{\bf Termination:} The algorithm concludes when $L$ is empty. At this point compare all matroids in the set $Y$ to identify minimal ones and denote the resulting set $Z$. Finally, we have $Z=\min\{\mathcal{A}\}$. 
\end{algorithm}

We now prove the correctness of Algorithm~\ref{algo 2}.

\begin{theorem}
   On input a point-line configuration $M$ with lines $\mathcal{L}$, Algorithm~\ref{algo 2} terminates and outputs $\min\{\mathcal{A}\}$, where $\mathcal{A}=\{N>M: \mathcal{C}_{1}(N)=\emptyset\ \text{and}\ \mathcal{C}_{2}(N)\neq\emptyset\}$.
\end{theorem}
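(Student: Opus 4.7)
The plan is to prove termination and correctness as two separate steps.

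For termination, observe that each branching step on a formula $F \notin \Pi_X$ produces two children $F \wedge (x \sim y)$ and $F \wedge (x \not\sim y)$, each with strictly more atoms than $F$. The branching rule selects $x, y$ with $\{\overline{x}_F, \overline{y}_F\} \subset e_1 \cap e_2$ for distinct edges $e_1, e_2 \in (\mathcal{L})_F$, which forces $\overline{x}_F \neq \overline{y}_F$ and guarantees that both children are consistent formulas in the sense of Definition~\ref{def:formula}. Since the number of possible atoms on $[d]$ is finite, every descending branch of the DFS has bounded length, and combined with the deduplication rule only finitely many formulas are ever pushed, so the algorithm halts.

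For correctness, I would show $Z = \min\{\mathcal{A}\}$ by proving two matching inclusions. For $\min\{\mathcal{A}\} \subseteq Z$: given $N \in \min\{\mathcal{A}\}$, trace a path in the DFS from the root to a leaf $F^* \in \Pi_X$ by choosing, at each branching step with selected pair $x, y$, the child $F \wedge (x \sim y)$ if $\{x, y\} \in \mathcal{C}_2(N)$ and $F \wedge (x \not\sim y)$ otherwise. An induction along this path, using Lemma~\ref{unique 35} and Remark~\ref{recursive} to track how $(\mathcal{L})_F$ evolves, shows that $N \in \text{Reali}_M(F)$ at each node. The path must terminate at some leaf $F^*$ because atoms accumulate monotonically, and then Lemma~\ref{unico} gives $M_{F^*} \leq N$; since $M_{F^*} \in \mathcal{A}$, the minimality of $N$ in $\mathcal{A}$ forces $M_{F^*} = N$, so $N \in Y$ and hence $N \in Z$. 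For the reverse inclusion $Z \subseteq \min\{\mathcal{A}\}$: if $M_F \in Z$ were not minimal in $\mathcal{A}$, then some $N \in \mathcal{A}$ with $N < M_F$ would, by the previous argument, produce a leaf $F'$ with $M_{F'} \leq N < M_F$ in $Y$, contradicting $M_F \in \min\{Y\}$.

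The main obstacle will be the inductive step of the path construction: verifying that whenever $N \in \text{Reali}_M(F)$, the appropriate child among $F \wedge (x \sim y)$ and $F \wedge (x \not\sim y)$ is consistent with $N$'s 2-circuit structure, and that $N$ remains in $\text{Reali}_M$ of that child. This reduces to a careful analysis of how $(\mathcal{L})_{F'}$ transforms under adding one atom, via Algorithm~\ref{delta p} and Definition~\ref{new def}, combined with the consistency of the induced equivalence relation on $[d]$. A secondary delicate point is confirming that only formulas $F$ with at least one $\sim$-atom contribute a genuine matroid $M_F \in \mathcal{A}$ to $Y$, so that any degenerate leaves in $\Pi_{\not\sim}$ do not distort the final minimum computation.
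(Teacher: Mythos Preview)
Your approach is essentially the same as the paper's: both trace, for each $N \in \mathcal{A}$, a path through the DFS tree by selecting at each branch the child compatible with $N$'s $2$-circuits, arrive at a leaf $F^* \in \Pi_X$, and invoke Lemma~\ref{unico} to obtain $M_{F^*} \leq N$, from which correctness follows. Your version is somewhat more thorough---you give an explicit termination argument (which the paper omits) and spell out both inclusions separately rather than packaging them into a single claim---and the secondary point you flag about $\Pi_{\not\sim}$-leaves is a genuine subtlety that the paper glosses over.
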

\begin{proof}
To show that the algorithm achieves the expected outcome, we establish the following claims:

\medskip
{\bf Claim~1.} Suppose that, at some step, we encounter $F\in \Pi\backslash \Pi_{X}$.  Let $N$ be any matroid in $\text{Reali}_{M}(F)$. Then, there exists a formula of the form $F\wedge a$ that will be visited in the future, with $N\in \text{Reali}_{M}(F\wedge a)$.

\medskip
By the description of the algorithm, we know that there exist $x,y\in [d]$ such that the formulas
$F\wedge (x\sim y)$ and $F\wedge (x\not \sim y)$,
will be visited. Since $N\in \text{Reali}_{M}(F)$, we have
\[N\in \text{Reali}_{M}(F\wedge (x\sim y))\cup \text{Reali}_{M}(F\wedge (x\not \sim y)), \]
which proves the claim. 

\medskip
{\bf Claim~2.} Let $N\in \mathcal{A}$. Then, upon completion of the algorithm, there exists an element $N\rq\in Z$ such that $N\geq N\rq$.

\medskip
Initially, we have $N\in \textup{Reali}_{M}(1\sim 2)$ or  $N\in\textup{Reali}_{M}(1 \not \sim 2)$. Consequently, by Claim~$1$, we know that we will eventually visit a formula $F\in \Pi_{X}$ with $N\in \text{Reali}_{M}(F)$. By Lemma~\ref{unico}, this implies $N\geq M_{F}$. Since the matroid $M_{F}$ is greater or equal than some element of $Z$, the claim follows. 

\medskip
This proves the correctness of the algorithm.
\end{proof}

\begin{example}\label{ex:quad}
To illustrate how Algorithm~\ref{algo 2} works, we apply its first steps for the point-line configuration $\text{QS}$ depicted in Figure~\ref{fig:combined} (Right), where $\mathcal{L}$ represents its set of lines. 
\begin{itemize}
\item We initialize a stack $L$ pushing the formulas $(1\sim 2)$ and $(1\not \sim 2)$. Additionally, we create an empty list $Y$.
\item We compute $(\mathcal{L})_{(1\sim 2)}=\{\{1,5,6\},\{1,4,6\},\{3,4,5\}\}$. Since $\{1,6\}\subset \{1,5,6\}\cap \{1,4,6\}$ we push the formulas $(1\sim 2)\wedge (1\not \sim 6)$ and $(1\sim 2)\wedge (1 \sim 6)$.
\item We compute $(\mathcal{L})_{(1\sim 2)\wedge (1\sim 6)}=\{\{3,4,5\}\}.$ Since $(1\sim 2)\wedge (1\sim 6)\in \Pi_{X}$ we add the matroid $\text{QS}_{(1\sim 2)\wedge (1\sim 6)}$ to $Y$. 
We denote this matroid by $N_{1}$ (Figure~\ref{new figure 4} (Left)).
\item We compute $(\mathcal{L})_{(1\sim 2)\wedge (1 \not \sim 6)}=\{\{1,4,5,6\},\{3,4,5\}\}.$ Since both edges intersect at $\{4,5\}$, we push the formulas $(1\sim 2)\wedge (1 \not \sim 6)\wedge (4\not \sim 5) $ and $(1\sim 2)\wedge (1 \not \sim 6)\wedge (4\sim 5)$. 
\item We compute $(\mathcal{L})_{(1\sim 2)\wedge (1 \not \sim 6)\wedge (4\sim 5)}=\{\{1,4,6\}\}.$ Since $(1\sim 2)\wedge (1 \not \sim 6)\wedge (4\sim 5)\in \Pi_{X}$ we add the matroid $\text{QS}_{(1\sim 2)\wedge (1 \not \sim 6)\wedge (4\sim 5)}$ to $Y$. 
We denote this matroid by $N_{2}$ (Figure~\ref{new figure 4} (Center)).
\item We compute $(\mathcal{L})_{(1\sim 2)\wedge (1 \not \sim 6)\wedge (4\not \sim 5)}=\{\{1,3,4,5,6\}\}.$ Since $(1\sim 2)\wedge (1 \not \sim 6)\wedge (4\not \sim 5)\in \Pi_{X}$ we add the matroid $\text{QS}_{(1\sim 2)\wedge (1 \not \sim 6)\wedge (4\not \sim 5)}$ to $Y$. 
We denote this matroid by $N_{3}$ (Figure~\ref{new figure 4} (Right)).
\end{itemize}
Following the algorithm to its completion, we find that $\min \{A\}$ consists of $25$ minimal matroids, each derived from the matroids $N_{i}$ by applying automorphisms of $\text{QS}$. 
\end{example}

\begin{figure}[H]
    \centering
    \includegraphics[width=0.75\textwidth, trim=0 0 0 0, clip]{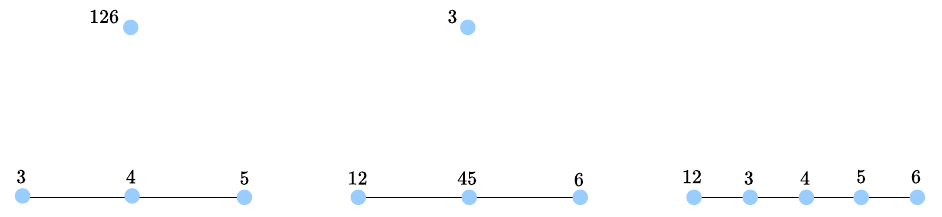}
    \caption{Minimal matroids of the quadrilateral set from Example~\ref{ex:quad}.}
    \label{new figure 4}
\end{figure}

\subsection{Determining $\min\{\mathcal{B}\}$}

To describe $\min\{\mathcal{B}\}$ from Definition~\ref{minimal matroids} and Equation~\eqref{unio}, we introduce the following definition.

\begin{definition}
For each $x\in \textstyle \binom{[d]}{3}\backslash \mathcal{C}_{3}(M)$, we define $M^x$ as the unique point-line configuration on the ground set $[d]$ that is minimal with the property that $M^x\geq \mathcal{L}\cup \{x\}$.
\end{definition}

The existence and uniqueness of $M^{x}$ are guaranteed by Lemma~\ref{unique}. Moreover, as shown in the proof of this lemma, we have 
$\mathcal{L}_{M^{x}}=(\mathcal{L}\cup \{x\})_{F},$
where 
$F=\wedge_{i\neq j\in [d]} (i\not \sim j),$
which can be recursively computed following the steps of Algorithm~\ref{delta p}.

\begin{example}
Consider the point-line configuration $M_{\text{Fano}}$ depicted in Figure~\ref{new figure} (Left). In this case $M_{\text{Fano}}^{x}$, is equal to the uniform matroid $U_{2,7}$ for any $x\in \textstyle \binom{[7]}{3}\setminus \mathcal{C}_{3}(M)$.
\end{example}

\begin{remark}\label{rema}
Let $x\in \textstyle \binom{[d]}{3}$ be such that $\size{x\cap l}\leq 1$ for each $l\in \mathcal{L}$. Then, the set $\mathcal{L}\cup \{x\}$ forms the collection of lines of a point-line configuration, implying that $M^{x}$ is a minimal element of $\mathcal{B}$. 
\end{remark}

We define the following subsets of $\textstyle \binom{[d]}{3}\backslash \mathcal{C}_{3}(M)$:
\begin{equation*}
\mathcal{S}=\{x\in \textstyle \binom{[d]}{3}: \text{$\size{x\cap l}\leq 1$ for each $l\in \mathcal{L}$}\},\quad \text{and} \quad  \mathcal{T}=(\textstyle \binom{[d]}{3}\setminus \mathcal{C}_{3}(M))\setminus \mathcal{S}.
\end{equation*}
Using Remark~\ref{rema}, it follows that
\begin{equation}\label{mini B}
\min\{\mathcal{B}\}=\{M^{x}:x\in \mathcal{S}\} \cup \min\{\min\{M^{x}:x\in \mathcal{T}\}\cup \{M^{x}:x\in \mathcal{S}\}\}.
\end{equation}

The following lemma is essential for outlining the algorithm to identify $\min \{\mathcal{B}\}$.

\begin{lemma}\label{seq}
Let $x,x_{1},\ldots,x_{n}\in \textstyle \binom{[d]}{3}\backslash \mathcal{C}_{3}(M)$. The following statements hold:
\begin{itemize}
\item[{\rm (i)}] 
Let $N\geq M$ be a point-line configuration. Then $N\geq M^{x}$ if and only if $x\in \mathcal{C}_{3}(N)$.
\item[{\rm (ii)}] 
Suppose that $x_{i+1}\in \mathcal{C}_{3}(M^{x_{i}})$ for each $i\in [n]$, identifying $1=n+1$. Additionally, assume there is no $y\not \in \{x_{1},\ldots,x_{n}\}$ such that $y\in \mathcal{C}_{3}(M^{x_{n}})$. Then, $M^{x_{n}}\in \min\{\mathcal{B}\}$.
\end{itemize}
\end{lemma}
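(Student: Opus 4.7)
The plan is to prove (i) first from the characterization of $M^{x}$ as a minimal configuration, and then to use (i) as the main tool in the proof of (ii).

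For (i), I would invoke Lemma~\ref{unique}: $M^{x}$ is the unique minimal point-line configuration on $[d]$ such that $\mathcal{L} \cup \{x\}$ is contained in its set of dependencies. If $N \geq M^{x}$, then $x \in \mathcal{D}(M^{x}) \subseteq \mathcal{D}(N)$, and since $N$ is simple (being a point-line configuration) with $|x|=3$, no proper subset of $x$ is dependent in $N$, hence $x \in \mathcal{C}_{3}(N)$. Conversely, if $x \in \mathcal{C}_{3}(N)$, then combined with $N \geq M$ the hypergraph $\mathcal{L} \cup \{x\}$ lies entirely in $\mathcal{D}(N)$, and the minimality of $M^{x}$ forces $N \geq M^{x}$.

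The key structural observation for (ii) is that applying (i) to $N = M^{x_{i}}$ and $x = x_{i+1}$, using the cyclic hypothesis $x_{i+1} \in \mathcal{C}_{3}(M^{x_{i}})$ (indices modulo $n$), gives the chain
\[M^{x_{1}} \geq M^{x_{2}} \geq \cdots \geq M^{x_{n}} \geq M^{x_{1}},\]
which collapses to $M^{x_{1}} = \cdots = M^{x_{n}}$. Also $M^{x_{n}} \in \mathcal{B}$: as a point-line configuration it has no circuits of size $\leq 2$, and $M^{x_{n}} > M$ because $x_{n}$ is a new $3$-circuit. To prove minimality, I would suppose $N \in \mathcal{B}$ with $M < N \leq M^{x_{n}}$ and derive $N = M^{x_{n}}$. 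Choose a minimal element $C$ of $\mathcal{D}(N) \setminus \mathcal{D}(M)$, nonempty since $N > M$. As $N$ is simple, $|C| \geq 3$. The central point is that $|C| = 3$: the inclusion $\mathcal{D}(M) \subseteq \mathcal{D}(N)$ implies $\rank(N) \leq \rank(M) \leq 3$, and in a rank-$3$ matroid every subset of size $\geq 4$ is already dependent in $M$, so $|C| \leq 3$ (if $\rank(M) < 3$ then $\binom{[d]}{3} \setminus \mathcal{C}_{3}(M) = \emptyset$ and the statement is vacuous). Hence $C \in \mathcal{C}_{3}(N)$, and since $N \leq M^{x_{n}}$ with $M^{x_{n}}$ simple, $C \in \mathcal{C}_{3}(M^{x_{n}}) \setminus \mathcal{C}_{3}(M)$. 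The exhaustiveness hypothesis forces $C = x_{j}$ for some $j$, and part (i) then yields $N \geq M^{x_{j}} = M^{x_{n}}$, whence $N = M^{x_{n}}$.

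The main obstacle is confining $|C|$ to exactly $3$: this is precisely where the rank-three cap of $M$ propagates, via the dependency order, into every matroid lying above $M$. Once $|C|=3$ is secured, the cyclic collapse of the $M^{x_{i}}$'s coming from part (i) reduces the minimality claim to a one-line application of (i).
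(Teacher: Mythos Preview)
Your proof is correct and follows essentially the same approach as the paper's: both use (i) to collapse the cyclic chain to $M^{x_{1}} = \cdots = M^{x_{n}}$, then argue that any point-line configuration $N$ with $M < N \leq M^{x_{n}}$ must contain some $x_{j}$ as a $3$-circuit, whence (i) forces $N \geq M^{x_{j}} = M^{x_{n}}$. Your write-up is more explicit than the paper's about why the new dependency must have size exactly $3$, but the structure is identical.
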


\begin{proof}
{\rm (i)} This follows directly from the definition of $M^{x}$. 

\medskip
{\rm (ii)} By item (i), we have $M^{x_{i+1}}\geq M^{x_{i}}$ for all $i\in [n]$, which implies:
$M^{x_{1}}=\cdots=M^{x_{n}}$.
Furthermore, since no $y\not \in \{x_{1},\ldots,x_{n}\}$ belongs to 
$\mathcal{C}_{3}(M^{x_{n}})$, it follows that: \[\mathcal{C}_{3}(M^{x_{n}})=\mathcal{C}_{3}(M)\cup \{x_{1},\ldots,x_{n}\}.\]
To prove that $M^{x_{n}}$ is minimal, assume there exists a point-line configuration $N$ with $M<N<M^{x_{n}}$. Then, for some $i\in [d]$ we must have that $x_{i}\in \mathcal{C}_{3}(N)$. By item (i), this implies $N\geq M^{x_{i}}=M^{x_{n}}$, contradicting the assumption.
\end{proof}

We now introduce an algorithm for identifying $\min(\mathcal{B})$.

\begin{algorithm}\label{algo B}
Algorithm for computing $\min\{\mathcal{B}\}$ for a point-line configuration $M$ with lines $\mathcal{L}$.

\smallskip
{\bf Initialization:} Create a list $Y$ to track the elements already visited, and add all elements from $\mathcal{S}$. Next, create a list $L$ and insert an arbitrary element $x\in \mathcal{T}$. Finally, construct a list $K$ containing the matroids $\{M^{x}:x\in \mathcal{S}\}$.

\smallskip
{\bf Exploration:} While $Y$ is a proper subset of $\textstyle \binom{[d]}{3} \backslash \mathcal{C}_{3}(M)$:

\smallskip
If $L$ is not empty:
\begin{itemize}
\item Let $x$ be the element at the tail of $L$, belonging to $\mathcal{T}$. Execute Algorithm~\ref{delta p} for the hypergraph $\mathcal{L} \cup {x}$
and the formula $F=\wedge_{i\neq j\in d} (i\not \sim j),$
until one of the following scenarios occurs:
\begin{itemize}
\item {\bf Scenario~1.} An edge containing an element $y\in Y$ is added to the hypergraph. In this case, remove all elements from $L$ and append them to $Y$.
\item {\bf Scenario~2.} An edge containing an element \[y\in (\textstyle \binom{[d]}{3} \backslash \mathcal{C}_{3}(M))\backslash (Y\cup L)\]  
is added to the hypergraph. In this case, append $y$ to the tail of $L$.
\end{itemize}

\smallskip
If the execution of Algorithm~\ref{delta p} concludes and there is no element $y\in \textstyle (\binom{[d]}{3} \backslash \mathcal{C}_{3}(M))\backslash L,$
such that $y\in \mathcal{C}_{3}(M^{x})$, then remove all elements from $L$, appending them to $Y$ and add $M^{x}$ to 
$K$.
\end{itemize}

If $L$ is empty, add an arbitrary element $x\in \mathcal{T}\backslash Y$.

\medskip
{\bf Termination:} The algorithm concludes when $Y=\textstyle \binom{[d]}{3} \backslash \mathcal{C}_{3}(M)$. Upon completion, it holds that 
$K=\min\{\mathcal{B}\}$.

\end{algorithm}

\begin{theorem}
On input a point-line configuration $M$ with lines $\mathcal{L}$, Algorithm~\ref{algo B} terminates and outputs $\min\{\mathcal{B}\}$, where $\mathcal{B}=\{N>M: \mathcal{C}_{1}(N)=\emptyset\ \text{and}\ \mathcal{C}_{2}(N)=\emptyset\}$.
\end{theorem}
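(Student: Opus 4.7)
The plan is to prove termination followed by correctness in both directions, invoking Lemma~\ref{seq} as the main structural tool.

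For termination, the quantity $|Y|+|L|$ strictly increases at each outer iteration: either Scenario~2 (or the initial push when $L$ is empty) appends a previously unseen triple to $L$, or Scenario~1 (or a successful addition to $K$) transfers all of $L$ into $Y$, increasing $|Y|$ by $|L|\geq 1$. Since $L\cup Y\subseteq \binom{[d]}{3}\setminus\mathcal{C}_3(M)$ and no element ever leaves $L\cup Y$, the algorithm halts.

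For the inclusion $K\subseteq\min\{\mathcal{B}\}$, the matroids $\{M^x:x\in\mathcal{S}\}$ added at initialization are minimal by Remark~\ref{rema}. For $M^{x_n}$ added during exploration with $L=[x_1,\ldots,x_n]$ at the moment of addition, the algorithm's invariants force $x_{i+1}\in\mathcal{C}_3(M^{x_i})$ for each $i<n$, since $x_{i+1}$ was appended to $L$ exactly when Algorithm~\ref{delta p}'s processing of $x_i$ produced an edge containing $x_{i+1}$; iterating Lemma~\ref{seq}(i) yields $M^{x_1}\geq\cdots\geq M^{x_n}$, and in particular $\mathcal{C}_3(M^{x_{i+1}})\subseteq\mathcal{C}_3(M^{x_i})$. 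The terminal check ensures $\mathcal{C}_3(M^{x_n})\setminus\mathcal{C}_3(M)\subseteq L$. Writing $T=\mathcal{C}_3(M^{x_n})\setminus\mathcal{C}_3(M)=\{x_{i_1},\ldots,x_{i_m}\}$ in $L$-order with $i_m=n$, the cyclic hypothesis $x_{i_{j+1}}\in\mathcal{C}_3(M^{x_{i_j}})$ follows from $\mathcal{C}_3(M^{x_{i_{j+1}}})\subseteq\mathcal{C}_3(M^{x_{i_j}})$, and the closure $x_{i_1}\in\mathcal{C}_3(M^{x_n})$ holds since $x_{i_1}\in T$; moreover, no triple outside $T$ belongs to $\mathcal{C}_3(M^{x_n})\setminus\mathcal{C}_3(M)$ by construction. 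Lemma~\ref{seq}(ii) then delivers $M^{x_n}\in\min\{\mathcal{B}\}$.

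For the reverse inclusion $\min\{\mathcal{B}\}\subseteq K$: if $M^y\in\min\{\mathcal{B}\}$ with $y\in\mathcal{S}$, inclusion is immediate from initialization. Otherwise $y\in\mathcal{T}$, and since the outer loop runs until $Y=\binom{[d]}{3}\setminus\mathcal{C}_3(M)$, the triple $y$ must eventually be pushed onto $L$ and reached as the tail. The minimality of $M^y$ forbids Scenario~1 during this processing, since any triple of $Y$ arising as a new circuit of $M^y$ would yield, via Lemma~\ref{seq}(i), a strictly smaller element of $\mathcal{B}$ below $M^y$; consequently the terminal check succeeds and $M^y$ enters $K$. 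The principal technical obstacle is rigorously maintaining the forward chain $x_{i+1}\in\mathcal{C}_3(M^{x_i})$ across the interruptions caused by Scenario~2, which requires carefully tracking which intermediate edge of Algorithm~\ref{delta p} witnesses each newly appended triple in $L$, as well as justifying that no genuine minimal matroid can slip past the exhaustive depth-first exploration.
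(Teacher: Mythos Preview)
Your termination argument and your forward inclusion $K\subseteq\min\{\mathcal{B}\}$ are correct; the subsequence argument extracting $T=\mathcal{C}_3(M^{x_n})\setminus\mathcal{C}_3(M)$ and applying Lemma~\ref{seq}(ii) is a clean way to handle Case~3.

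The reverse inclusion, however, has a genuine gap. Your claim that minimality of $M^y$ ``forbids Scenario~1'' is false: if a triple $z\in Y$ appears as a circuit of $M^y$, Lemma~\ref{seq}(i) gives only $M^z\leq M^y$, not strict inequality, and indeed $M^z=M^y$ is entirely possible. In that case Scenario~1 fires, the whole list $L$ (including $y$) is flushed to $Y$, and nothing is added to $K$ at this step; your conclusion that ``the terminal check succeeds and $M^y$ enters $K$'' does not follow. You also do not address what happens if Scenario~2 fires at $y$: a new element $y'$ is pushed, processing moves to $y'$, and $y$ is never again the tail (elements leave $L$ only when the entire list is flushed). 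Whether $M^y$ ever enters $K$ then depends on how the chain eventually terminates, possibly many steps later and possibly via Scenario~1 with some earlier $z\in Y$.

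To close this gap you need exactly the device the paper uses: maintain the invariant $K=\min\{M^z:z\in Y\}$ after every step. This invariant absorbs both problematic cases uniformly: in Scenario~1 the chain $M^{x_1}\geq\cdots\geq M^{x_n}\geq M^z$ shows that appending $L$ to $Y$ preserves the invariant, and in Case~3 your subsequence argument shows the newly added $M^{x_n}$ is minimal among $\{M^z:z\in Y\cup L\}$. At termination $Y=\binom{[d]}{3}\setminus\mathcal{C}_3(M)$ and the invariant together with Equation~\eqref{mini B} deliver $K=\min\{\mathcal{B}\}$ directly. Your direct two-inclusion strategy implicitly needs this invariant to trace backwards through earlier processing, so you may as well make it the spine of the argument.
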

\begin{proof}
\medskip
We will show that the algorithm produces the expected outcome by proving the following claim: 

\medskip
{\bf Claim:} After each step, the set $K$ satisfies $K=\min \{M^{z}:z\in Y\}$.

\medskip
We prove the claim by induction. Suppose that, while visiting $x\in \mathcal{T}$, the current state of $K$ is $K=\min \{M^{z}:z\in Y\}$. At this point, let $\{x_{1},\ldots,x_{n}\}$ denote the elements of $L$ with $x_{n}=x$. We aim to show that the equality remains true after executing the algorithm for $x$. We consider three cases:

\medskip
{\bf Case 1.} Suppose that during the execution of Algorithm~\ref{delta p} for the hypergraph $\mathcal{L}_{M}\cup \{x\}$ and the formula $\textstyle{F=\bigwedge_{i\neq j\in d} (i\not \sim j)}$,
we first encounter Scenario~1. In this case, $y\in \mathcal{C}_{3}(M^{x})$. Moreover, we have that $x_{i}\in \mathcal{C}_{3}(M^{x_{i+1}})$ for $i\leq n-1$, which gives
$M^{x_{1}}\geq \cdots \geq M^{x}\geq M^{y}$.
Thus, when the elements of $L$ are removed and appended to $Y$, the equality $K=\min \{M^{z}:z\in Y\}$ still holds.

\medskip
{\bf Case~2.} Suppose we first encounter Scenario~2 during the execution of Algorithm~\ref{delta p}. In this situation, the lists $Y$ and $K$ remain unchanged, so the equality $K=\min \{M^{z}:z\in Y\}$ persists.

\medskip
{\bf Case~3.} Suppose the execution of Algorithm~\ref{delta p} concludes without finding any element \[y\in \textstyle (\binom{[d]}{3} \backslash \mathcal{C}_{3}(M))\backslash L,\]
such that $y\in \mathcal{C}_{3}(M^{x})$. Let $j\leq n-1$ be the smallest integer such that $x_{j}\in \mathcal{C}_{3}(M^{x})$. Moreover, since $x_{i}\in \mathcal{C}_{3}(M^{x+1})$ for $i\leq n-1$, we obtain
$M^{x_{1}}\geq \cdots \geq M^{x}\geq M^{x_{j}}$,
which further implies that
\[M^{x_{j}}=\cdots=M^{x}.\]
Since no element $y\not \in \{x_{j},\ldots,x\}$ satisfies $y\in \mathcal{C}_{3}(M^{x})$, by Lemma~\ref{seq}, we have $M^{x}\in \min\{\mathcal{B}\}$. Therefore, when this step is finalized, the elements $\{x_{1},\ldots,x_{n}\}$ are added to $Y$, and the minimal matroid $M^{x}$ is added to $K$, preserving the equality $K=\min \{M^{z}:z\in Y\}$.

\medskip
Upon completion of the algorithm, we have $Y=\textstyle \binom{[d]}{3}\backslash \mathcal{C}_{3}(M)$, and thus $K=\min \{\mathcal{B}\}$, as desired.
\end{proof}
\subsection{Determining $\min \{\mathcal{C}\}$}

To describe $\min\{\mathcal{C}\}$ from Definition~\ref{minimal matroids} and Equation~\eqref{unio}, we introduce the following definition.

\begin{definition}
For each $i\in [d]$, we define $M(i)$ as the matroid obtained by designating $i$ as a loop. The circuits of this matroid are given by
$\mathcal{C}(M(i))=\mathcal{C}(M\backslash i)\cup \{\{i\}\}$.
\end{definition}

It is straightforward to verify that $\min \{\mathcal{C}\}=\{M(i):i\in [d]\}$. We now establish the following fundamental lemma. Recall that $\mathcal{L}$ denotes the set of lines of $M$.

\begin{lemma}\label{cond min}
Let $i\in [d]$. Then $M(i)\in \min(M)$ if and only if the following conditions hold:
\begin{itemize}
\item
For each point $j\neq i$ there exist $l_{1},l_{2}\in \mathcal{L}$ with $i\in l_{1},j\in l_{2}$ and $(l_{1}\cap l_{2})\backslash \{i,j\}\neq \emptyset$. 
\item
For each pair of points $j,k\neq i$ there exist two points in $\{i,j,k\}$ that lie on a common line.
\item
For any line $l\in \mathcal{L}$, there exists a point on $l$ that belongs to a common line with $i$.
\end{itemize}
\end{lemma}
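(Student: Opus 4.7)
The plan is to prove both directions by analyzing the structure of matroids $N$ strictly between $M$ and $M(i)$ in the dependency order. Since $\mathcal{D}(M(i))=\mathcal{D}(M)\cup\{S:i\in S\}$, every new dependency of such an $N$ must contain $i$, and in particular $N$ has a new circuit $C\ni i$. Because $M$ has rank at most three, $|C|\in\{1,2,3,4\}$. The extremes dispose of themselves: $|C|=1$ means $\{i\}$ is a loop in $N$, forcing $N\geq M(i)$; and a hypothetical new 4-circuit $\{i,a,b,c\}$ of $N$ would require all of its 3-subsets to be independent in $N$, and since $N$'s new dependencies all contain $i$, this forces $\{a,b,c\}$ independent in $M$, whence $\{i,a,b,c\}$ would already be a 4-circuit of $M$. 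Hence the non-trivial cases are $|C|=2$, where $N\geq M_F$ with $F=(i\sim j)$, and $|C|=3$, where $N\geq M^{\{i,j,k\}}$.

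For the forward direction, I would argue the contrapositive by producing an intermediate matroid whenever a condition fails. Failure of (1) for some $j$ places $F=(i\sim j)$ in $\Pi_X$, and a direct check on the construction of $M_F$ in Lemma~\ref{unico} shows that every new dependency of $M_F$ lies inside $\{S:i\in S\}$, giving $M<M_F<M(i)$. Failure of (2) for a pair $\{j,k\}$ means no pair in $\{i,j,k\}$ is collinear, so $\{i,j,k\}$ can be adjoined as a new line without triggering any merging, and $M^{\{i,j,k\}}$ sits strictly between $M$ and $M(i)$. Failure of (3) for a line $l$ means no line through $i$ meets $l$, so replacing $l$ by $l\cup\{i\}$ yields a valid point-line configuration whose new 3-dependencies are exactly the sets $\{i,a,b\}$ with $a,b\in l$, again providing an intermediate matroid.

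For the reverse direction, I would assume (1)--(3) and suppose for contradiction that $M<N<M(i)$, exploiting the new circuit $C$. When $|C|=2$, $C=\{i,j\}$: condition (1) supplies $l_1\ni i$, $l_2\ni j$ with some $p\in(l_1\cap l_2)\setminus\{i,j\}$, so after identification the edges coming from $l_1$ and $l_2$ share $\{\bar i,p\}$ in $(\mathcal{L})_F$ and are forced to merge; picking $a\in l_1\setminus\{i,p\}$ and $b\in l_2\setminus\{j,p\}$, the set $\{a,b,p\}$ is a new dependency of $M_F$ that avoids $i$ and is not in $\mathcal{D}(M)$, so $M_F\not\leq M(i)$ and hence $N\not\leq M(i)$. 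When $|C|=3$, $C=\{i,j,k\}$: condition (2) supplies a line $l$ containing two of $\{i,j,k\}$; if this pair contains $i$, the induced merge produces $l\cup\{k'\}$ for the remaining element, and two non-$i$ points of $l$ together with $k'$ form a new $i$-free 3-dependency; if the pair is $\{j,k\}$ with $i\notin l$, then condition (3) produces $l'\ni i$ meeting $l$ at some $p$, forcing a further merge to $l\cup l'$, after which two points of $l\setminus\{p\}$ together with a point of $l'\setminus\{i,p\}$ form the desired $i$-free 3-dependency. In both sub-cases $N\not\leq M(i)$, a contradiction.

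The main obstacle is the sub-case analysis in the reverse direction for $|C|=3$: one must track precisely which pair of $\{i,j,k\}$ lies on a common line, verify that the merging triggered by conditions (2) and (3) produces a concrete $i$-free 3-element dependency, and confirm that this dependency is genuinely new rather than already in $\mathcal{D}(M)$. This last step crucially relies on the point-line axiom that any two distinct points lie on at most one common line, which forces any candidate collinearity of the constructed triple to collapse onto the already-merged line, ruling out its presence in $M$.
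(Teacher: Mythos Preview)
Your overall strategy matches the paper's: the forward direction exhibits an intermediate matroid when a condition fails, and the reverse direction derives a contradiction from the structure of a new circuit of $N$. However, there is a genuine gap in your forward direction for the failure of condition~(1).

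You claim that when (1) fails for $j$, the matroid $M_F$ with $F=(i\sim j)$ satisfies $M<M_F<M(i)$. While it is true that $F\in\Pi_X$, the inequality $M_F\leq M(i)$ is false in general. By the construction in Lemma~\ref{unico}, the $3$-circuits of $M_F$ are all triples $\{a,b,c\}$ whose image $\{\bar a,\bar b,\bar c\}$ lies in an edge of $(\mathcal{L})_F$. In particular, if $\ell$ is any line of $M$ through $i$ with $j\notin\ell$ and $\{i,b,c\}\subset\ell$, then $\{\bar j,\bar b,\bar c\}=\{\bar i,\bar b,\bar c\}$ lies in the image of $\ell$, so $\{j,b,c\}\in\mathcal{C}_3(M_F)$. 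This is a new dependency not containing $i$, hence not in $\mathcal{D}(M(i))$. Concretely: take $M$ with lines $\{i,1,2\}$ and $\{j,3,4\}$ and a free point; condition~(1) fails for $j$, yet $\{j,1,2\}$ is dependent in $M_F$ but independent in $M(i)$. The paper avoids this by constructing the intermediate matroid directly, with $\mathcal{C}_2(N)=\{\{i,j\}\}$ and $\mathcal{C}_3(N)=\{c\in\mathcal{C}_3(M):\{i,j\}\not\subset c\}$; one then checks $N\setminus i=M\setminus i$, which gives $N<M(i)$.

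A related looseness appears in your reverse direction for $|C|=2$: precisely because condition~(1) holds, $F=(i\sim j)\notin\Pi_X$, so $M_F$ is undefined and you cannot invoke $N\geq M_F$. Your argument is easily repaired by working directly in $N$: since $\{i,j\}\in\mathcal{C}_2(N)$ while $\{j,p\}$ stays independent (a $2$-circuit avoiding $i$ would contradict $N\leq M(i)$), submodularity gives $\rank_N(l_1\cup l_2)\leq 2$, and your triple $\{a,b,p\}$ then furnishes the contradiction. This is exactly the paper's Case~2 rank argument.
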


\begin{proof}
First, assume there exists a point $j\neq i$ such that there do not exist distinct lines $l_{1},l_{2}\in \mathcal{L}$ with $i\in l_{1},j\in l_{2}$ and $l_{1}\cap l_{2}\backslash \{i,j\}\neq \emptyset$. We can then define a matroid $N$ of rank three with the following set of circuits of size at most three:
\begin{equation*}
\mathcal{C}_{1}(N)=\emptyset, \quad \mathcal{C}_{2}(N)=\{i,j\}, \quad \text{and} \quad   \mathcal{C}_{3}(N)=\{c\in \mathcal{C}_{3}(M): \{i,j\}\not \subset c\}
\end{equation*}
Given the condition on $j$, this matroid is well defined. Moreover, it is straightforward to verify that $M<N$ and $N\backslash i=M\backslash i$, implying that $N<M(i)$. Therefore, in this case, $M(i)$ would not be minimal. 

Next, suppose there are distinct points $j,k\neq i$ such that no two points in $\{i,j,k\}$ lie on a common line. Here, we define the point-line configuration $N$ with lines given by 
$\mathcal{L}_{N}=\mathcal{L}\cup \{\{i,j,k\}\}$.
Given the condition on $j$ and $k$, this point-line configuration is well defined. It is easy to verify that $M<N$ and $N\backslash i=M\backslash i$, implying that $N<M(i)$. Therefore, in this case, $M(i)$ would not be minimal.

Finally, assume there exists a line $l\in \mathcal{L}$ such that $i$ does not lie on a common line with any point in $l$. In this case, we define the point-line configuration $N$ with the following set of lines:
\[\mathcal{L}_{N}=(\mathcal{L}\cup \{l\cup \{i\}\})\backslash \{l\}\]
Given the condition on $l$, this point-line configuration is well defined. It is easy to verify that $M<N$ and $N\backslash i=M\backslash i$, implying that $N<M(i)$. Therefore, in this case, $M(i)$ would not be minimal.

Now assume that $i\in [d]$ satisfies the three conditions. To prove that $M(i)$ is minimal suppose for contradiction that there exists a matroid $M<N<M(i)$. We will separate this into cases: 

\medskip
{\bf Case~1.} $N$ has a loop $\{j\}$ for some $j\in [d]$. 

\medskip
{\bf Case~1.1.} If $j=i$, then $N\geq M(i)$, which is a contradiction.

\medskip
{\bf Case~1.2.} If $j\neq i$, then $N\not<M(i)$, which is a contradiction.

\medskip
{\bf Case~2.} $N$ has no loops but contains a double point $\{k,j\}$. Since $\{k,j\}\in \mathcal{D}(M(i))$, we must have $i\in \{j,k\}$. Without loss of generality, assume $k=i$. By condition (i), there exist distinct lines $l_{1},l_{2}\in \mathcal{L}$ with $i\in l_{1},j\in l_{2}$ and $l_{1}\cap l_{2}=\{r\}$.

\medskip
{\bf Case~2.1.} If $\{i,r\}\not \in \mathcal{C}(N)$, then $l_{1}\cup l_{2}\subset \closure{\{i,r\}}$ in $N$, implying that $\rank_{N}(l_{1}\cup l_{2})\leq 2$. This contradicts $\rank_{M(i)}(l_{1}\cup l_{2})= 3$.

\medskip
{\bf Case~2.2.} If $\{i,r\}\in \mathcal{C}(N)$, then $\{j,r\}\in \mathcal{C}(N)$, contradicting $\{j,r\}\not \in \mathcal{C}(M(i))$.

\medskip
{\bf Case~3.} Suppose $N$ has no loops and no double points. Since $M<N$, there exists a set $\{j,k,s\}\in \mathcal{C}(N)$ that is independent in $M$. Since $N<M(i)$, it follows that $\{j,k,s\}$ is dependent in $M(i)$, implying $i\in \{j,k,s\}$. Without loss of generality, assume $i=s$. 

\medskip
{\bf Case~3.1.} Suppose that $j$ and $k$ do not belong to a common line. By condition (ii), the point $i$ lies on a common line with either $j$ or $k$. Suppose, without lost of generality, that this point is $j$, and let $l$ denote this line. In $N$, we have $\rank_{N}(\{i,j,k\})=2$ and $\rank(l)=2$, and since $\{i,j\}\in \{i,j,k\}\cap l$, it follows that $\rank_{N}(l\cup \{j,k\})\leq 2$, which contradicts $\rank_{M(i)}(l\cup \{j,k\})=3$.

\medskip
{\bf Case~3.2} Suppose instead that $j$ and $k$ belong to a common line $l\rq \in \mathcal{L}$. 
By condition (iii), there exists a point $r\in l\rq$ and a line $l\in \mathcal{L}$ such that $\{r,i\}\subset  l$. In $N$, we have $\rank_{N}(\{i\}\cup l\rq)=2$ and $\rank(l)=2$, and since $\{i,r\}\in (\{i\}\cup l\rq) \cap l$, it follows that $\rank_{N}(l\cup \{j,k\})\leq 2$, again contradicting $\rank_{M(i)}(l\cup \{j,k\})=3$.

\medskip
This completes the proof.
\end{proof}

\begin{example}
Consider the point-line configurations $M_{\text{Fano}}$ and $M_{\text{Pappus}}$ depicted in Figure~\ref{new figure} (Left) and Figure~\ref{new figure} (Center), respectively.
\begin{itemize}
\item Each point of $M_{\text{Fano}}$ meets the three conditions of Lemma~\ref{cond min}, leading to $M_{\text{Fano}}[0]=[7]$. 
\item For $M_{\text{Pappus}}$, the triples $\{1,4,9\},\{3,6,7\}$ and $\{2,5,8\}$ satisfy that no pair of points within any triple lies on a common line of $M_{\text{Pappus}}$. Hence, we conclude that $M_{\text{Pappus}}[0]=\emptyset$,  since no point fulfills the second condition of Lemma~\ref{cond min}.
\end{itemize}
\end{example}

\begin{definition}\label{M[0]}
    Let $M[0]$ be the set of points satisfying conditions in Lemma~\ref{cond min}. By this lemma, 
\begin{equation}\label{condi}\min(M)\cap \mathcal{C}=\{M(i):i \in M[0]\}.\end{equation}
\end{definition}

We now introduce an algorithm for identifying $M[0]$ from Definition~\ref{M[0]}.

\begin{algorithm}\label{algo c}
{\rm Algorithm for computing $M[0]$ for a point-line configuration $M$, with lines $\mathcal{L}$. 
\begin{itemize}
    \item For 
    every pair of lines $l_{1},l_{2}\in \mathcal{L}$ that intersect, draw an edge between each point in $l_{1}\backslash (l_{1}\cap l_{2})$ and each point in $l_{2}\backslash (l_{1}\cap l_{2})$. Let $G_{1}$ be the resulting graph on $[d]$. 
 \item Define $C_{1}$ as the set of all points in $G_{1}$ that have degree $d-1$.

\item For 
every point $i\in [d]$ and every point  $j\in [d]\backslash \cup_{l\in \mathcal{L}_{i}}l$, draw an edge between $i$ and $j$. Denote by $G_{2}$ the graph on the vertex set $[d]$ obtained in this way, where two points are connected if and only if they do not belong to a common line. 

\item Let $C_{2}$ be the set of points in $G_{2}$ that do not form part of any $3$-clique.

\item For each
 point $j\in C_{1}\cap C_{2}$ verify whether every line $l\rq \not \in \mathcal{L}_{j}$ has non trivial intersection with the set $\cup_{l\in \mathcal{L}_{j}}l$. Define $C_{3}$ as the points satisfying this condition.
\end{itemize}

\noindent{\bf Termination:} We have $C_{3}=M[0]$.
}\end{algorithm}

\begin{theorem}
  On input a point-line configuration $M$ with lines $\mathcal{L}$, 
   Algorithm~\ref{algo c} terminates and 
   outputs $M[0]$ from Definition~\ref{M[0]}.
\end{theorem}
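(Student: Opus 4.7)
The plan is to verify termination and then show $C_{3}=M[0]$ by matching the three stages of the algorithm to the three conditions of Lemma~\ref{cond min}. Termination is immediate since $[d]$ and $\mathcal{L}$ are finite: the construction of $G_{1}$ ranges over pairs of intersecting lines, $C_{1}$ is obtained by a degree test on each vertex, $G_{2}$ and $C_{2}$ involve analogous finite scans, and the final filter checks for each candidate point at most $|\mathcal{L}|$ line intersections, so the procedure halts after a bounded number of set-theoretic operations.

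For correctness, I would show that $C_{1}$, $C_{2}$, and the final filter correspond respectively to conditions (i), (ii), and (iii) of Lemma~\ref{cond min}. First, $i \in C_{1}$ iff every $j \neq i$ is adjacent to $i$ in $G_{1}$. By the definition of $G_{1}$, the edge $\{i,j\}$ is present iff there exist lines $l_{1},l_{2} \in \mathcal{L}$ with $l_{1} \cap l_{2} \neq \emptyset$, $i \in l_{1} \setminus (l_{1} \cap l_{2})$, and $j \in l_{2} \setminus (l_{1} \cap l_{2})$. Unwinding the set differences, this is equivalent to the existence of lines $l_{1} \ni i$, $l_{2} \ni j$ with $(l_{1} \cap l_{2}) \setminus \{i,j\} \neq \emptyset$, which is condition (i).

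For the second step, the edges of $G_{2}$ are the pairs of points that do not lie on any common line, so a $3$-clique of $G_{2}$ containing $i$ is a triple $\{i,j,k\}$ with $j,k \neq i$ no two of which lie on a common line; this is the negation of condition (ii) for the pair $(j,k)$. Hence $i \in C_{2}$ iff $i$ satisfies condition (ii). Finally, for $i \in C_{1} \cap C_{2}$, the algorithm places $i$ in $C_{3}$ iff every line $l' \notin \mathcal{L}_{i}$ meets $\bigcup_{l \in \mathcal{L}_{i}} l$ nontrivially; equivalently, every line of $M$ contains a point that shares a line with $i$ (the lines in $\mathcal{L}_{i}$ trivially satisfy this), which is condition (iii). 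Combining the three equivalences yields $C_{3}=M[0]$.

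The main technical obstacle will be a careful verification of the first equivalence: I would unpack both directions in the $G_{1}$ step to confirm that the algorithmic edge condition matches condition (i) exactly, paying attention to the symmetric treatment of $i$ and $j$ and to the fact that degenerate pairs $l_{1}=l_{2}$ contribute no edges since $l_{1} \setminus (l_{1} \cap l_{2}) = \emptyset$ in that case.
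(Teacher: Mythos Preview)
Your proposal is correct and follows essentially the same approach as the paper: both arguments verify that $C_{1}$, $C_{2}$, and the final filter correspond respectively to conditions (i), (ii), and (iii) of Lemma~\ref{cond min}, with the paper organizing this as three separate claims and your version stating the equivalences directly. Your explicit treatment of termination and of the degenerate case $l_{1}=l_{2}$ is a small addition, but the logical content is the same.
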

\begin{proof}
To show that the algorithm achieves the expected outcome, we establish the following claims:

\medskip
{\bf Claim~1.} Any point $i\not \in C_{1}$ does not satisfy condition (i).

\medskip
Since $i\not \in C_{1}$, there exists a point $j\neq i$ such that $(i,j)$ is not an edge of $G_{1}$, meaning that there are no distinct lines $l_{1},l_{2}\in \mathcal{L}$ with $i\in l_{1},j\in l_{2}$ and $(l_{1}\cap l_{2})\backslash \{i,j\}\neq \emptyset$.

\medskip
{\bf Claim~2.} Any point $i\not \in C_{2}$ does not satisfy condition (ii).

\medskip
Since $i\not \in C_{2}$, there exist distinct points $j,k\neq i$ such that no two points in $\{i,j,k\}$ lie on a common line. This implies that $i$ does not satisfy the second condition.

\medskip
From these claims, we conclude that $M[0]\subset C_{1}\cap C_{2}$. The correctness follows from the next claim.

\medskip
{\bf Claim~3.} A point $i\in C_{1}\cap C_{2}$ belongs to $M[0]$ if and only if $i\in C_{3}$.

\medskip
Since $i\in C_{1}$, it clearly satisfies condition (i). Additionally, since $i\in C_{2}$, it fulfills condition (ii). 
Therefore, it belongs to $M[0]$ if and only if it satisfies condition (iii). This is precisely the criterion for a point to belong to $C_{3}$. Thus, the correctness of the algorithm is established.
\end{proof}

\subsection{Algorithm for identifying $\min(M)$}

We are now ready to present our main algorithm for determining $\min(M)$.

\begin{algorithm}\label{algo m}
For a point-line configuration $M$, the algorithm proceeds with the following steps:

\begin{itemize}
    
\item Identify $\min \{\mathcal{A}\}$ using Algorithm~\ref{algo 2}.

\item  Identify $\min\{\mathcal{B}\}$ using Algorithm~\ref{algo B}.

\item  Identify $M[0]$ using Algorithm~\ref{algo c}.

\item  For each $N\in \min\{\mathcal{A}\}$, check if there is no $N\rq \in \min \{\mathcal{B}\}$ such that $N\geq N\rq$. Let $L$ be the set of matroids in $\min\{\mathcal{A}\}$ that satisfy this condition.
\end{itemize}
{\bf Termination:} We have $\min(M)=L\cup \min \{\mathcal{B}\} \cup \{M(i): i\in M[0]\}$.
\end{algorithm}

\begin{theorem}
On input a point-line configuration $M$, Algorithm~\ref{algo m} terminates and outputs $\min(M)$, the set of minimal matroids of $M$. 
\end{theorem}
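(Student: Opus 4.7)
The plan is to deduce correctness from Equation~\eqref{unio}, namely $\min(M) = \min\{\min\{\mathcal{A}\} \cup \min\{\mathcal{B}\} \cup \min\{\mathcal{C}\}\}$, combined with the already-established correctness of the three subroutines and a short structural analysis of which elements of one class can lie strictly below elements of another. Termination is immediate: each of Algorithms~\ref{algo 2}, \ref{algo B}, and \ref{algo c} terminates, and the final filtering step is a comparison between two finite lists.

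For correctness, my first step is to establish three comparability facts from the defining circuit-size-one and size-two conditions of $\mathcal{A},\mathcal{B},\mathcal{C}$:
\begin{itemize}
\item No $N \in \mathcal{C}$ lies strictly below any $N' \in \mathcal{A} \cup \mathcal{B}$: a loop $\{i\}$ of $N$ would be forced into $\mathcal{C}_1(N')$, contradicting $\mathcal{C}_1(N') = \emptyset$.
\item No $N \in \mathcal{A}$ lies strictly below any $N' \in \mathcal{B}$: a 2-circuit of $N$ would have to contain a circuit of $N'$, impossible since $\mathcal{B}$-matroids have no circuits of size $\leq 2$.
\item Every $N \in \mathcal{C}$ with loop $\{i\}$ satisfies $N \geq M(i)$, since the circuits of $M(i)$ are exactly $\mathcal{C}(M \setminus i) \cup \{\{i\}\}$, all of which lie in $\mathcal{D}(N)$. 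Consequently every element of $\min(M) \cap \mathcal{C}$ is of the form $M(i)$ for some $i \in [d]$.
\end{itemize}

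The second step is to decompose $\min(M)$ across $\mathcal{A}, \mathcal{B}, \mathcal{C}$ and identify each piece using the observations above:
\begin{itemize}
\item $\min(M) \cap \mathcal{B} = \min\{\mathcal{B}\}$, since no element of $\min\{\mathcal{A}\} \cup \min\{\mathcal{C}\}$ is strictly below a $\min\{\mathcal{B}\}$-element by the first two observations.
\item $\min(M) \cap \mathcal{C} = \{M(i) : i \in M[0]\}$ by the third observation together with Lemma~\ref{cond min}, which is exactly the output of Algorithm~\ref{algo c}.
\item $\min(M) \cap \mathcal{A}$ equals the subset $L$ produced by the algorithm, since the first observation rules out elements of $\min\{\mathcal{C}\}$ being strictly below $\mathcal{A}$-matroids, so filtering $\min\{\mathcal{A}\}$ only against $\min\{\mathcal{B}\}$ suffices.
\end{itemize}

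Assembling the three pieces yields $\min(M) = L \cup \min\{\mathcal{B}\} \cup \{M(i) : i \in M[0]\}$, which is precisely the algorithm's output. The main subtlety is the structural comparability analysis, and in particular the observation that elements of $\min\{\mathcal{C}\}$ never threaten the minimality of $\mathcal{A}$- or $\mathcal{B}$-matroids; this is what justifies that only one cross-filtering (namely $\min\{\mathcal{A}\}$ against $\min\{\mathcal{B}\}$) is needed, rather than all three pairwise comparisons. Once these asymmetric comparability facts are in place, the rest is a clean three-way assembly.
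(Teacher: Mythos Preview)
Your proof is correct and follows essentially the same approach as the paper: both start from Equation~\eqref{unio}, invoke Lemma~\ref{cond min} to handle $\min(M)\cap\mathcal{C}$, and then use the absence of loops in $\mathcal{A}\cup\mathcal{B}$ and of double points in $\mathcal{B}$ to argue that only the $\min\{\mathcal{A}\}$-against-$\min\{\mathcal{B}\}$ filtering is needed. Your presentation of the comparability facts is slightly more explicit, but the argument is the same.
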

\begin{proof}
By Equation~\eqref{unio}, we know that
$\min(M)=\min\{\min\{\mathcal{A}\}\cup \min\{\mathcal{B}\}\cup \min\{\mathcal{C}\}\}$.
By Lemma~\ref{cond min}, we establish that $\mathcal{C}\cap \min(M)=\{M(i):i\in M[0]\}$. Moreover, since no matroid in $\mathcal{B}$ has loops or double points, none of them can be greater than or equal to a matroid in $\mathcal{A}$ or $\mathcal{C}$, implying that $\min\{\mathcal{B}\}\subset \min(M)$. Similarly, no matroid in $\mathcal{A}$ contains a loop, ensuring that none of them is greater than or equal to a matroid in $\mathcal{C}$. Consequently, we conclude that
$\min(M)=L\cup \min \{\mathcal{B}\} \cup \{M(i): i\in M[0]\}$,
which completes the proof.
\end{proof}

\section{Decomposition strategy}\label{strat}

We now propose a strategy to determine the irreducible decomposition of circuit varieties fore point-line configurations. The following result plays a key role.

\begin{proposition}\label{deco circ}
Let $M$ be a matroid. Then
$V_{\mathcal{C}(M)}=\bigcup_{N\in \min(M)}V_{\mathcal{C}(N)}\ \cup \ V_{M}$.
\end{proposition}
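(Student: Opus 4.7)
The plan is to prove both inclusions separately. The non-trivial direction rests on the observation that any tuple of vectors naturally determines a matroid, and the definition of $\min(M)$ then assigns each point of $V_{\mathcal{C}(M)}$ to one of the pieces on the right-hand side.

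For the inclusion $\supseteq$, I would first note that for $N \in \min(M)$ we have $N > M$ in the dependency order, so $\mathcal{D}(M) \subseteq \mathcal{D}(N)$. Hence any $\gamma$ that includes the dependencies of $N$ automatically includes those of $M$, yielding $V_{\mathcal{C}(N)} \subseteq V_{\mathcal{C}(M)}$. The inclusion $V_M \subseteq V_{\mathcal{C}(M)}$ follows because $\Gamma_M \subseteq V_{\mathcal{C}(M)}$ directly from the definitions, and $V_{\mathcal{C}(M)}$ is Zariski closed, as it is cut out by the minor equations associated to the dependencies of $M$.

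For the reverse inclusion, given $\gamma \in V_{\mathcal{C}(M)}$, I would associate to $\gamma$ the matroid $M_{\gamma}$ on $[d]$ whose dependent sets are exactly the linearly dependent subsets of $\{\gamma_{1},\ldots,\gamma_{d}\}$. Since $\gamma$ includes the dependencies of $M$, every element of $\mathcal{D}(M)$ lies in $\mathcal{D}(M_{\gamma})$, so $M_{\gamma} \geq M$. A dichotomy now completes the argument: if $M_{\gamma} = M$, then $\gamma$ is a realization of $M$ and hence $\gamma \in \Gamma_M \subseteq V_M$; otherwise $M_{\gamma} \in \mathcal{M}_{>M}$, so by Definition~\ref{minimal matroids} there exists $N \in \min(M)$ with $N \leq M_{\gamma}$. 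This gives $\mathcal{D}(N) \subseteq \mathcal{D}(M_{\gamma})$, which means $\gamma$ includes the dependencies of $N$, and so $\gamma \in V_{\mathcal{C}(N)}$.

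I do not anticipate a major technical obstacle: the argument is a clean combinatorial dichotomy on $M_{\gamma}$ that uses only the definition of $\min(M)$ and the fact that a tuple of vectors determines a matroid. The one point to watch is a consistent interpretation of the ambient affine space, since the matroids $N \in \min(M)$ may have rank smaller than $\operatorname{rank}(M)$; viewing every set in the equation as a subvariety of $\mathbb{C}^{\operatorname{rank}(M) \cdot d}$ under the natural embedding resolves this uniformly.
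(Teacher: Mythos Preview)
Your proposal is correct and follows essentially the same approach as the paper: both prove the nontrivial inclusion by assigning to $\gamma\in V_{\mathcal{C}(M)}$ the matroid it realizes and then splitting into the cases $M_\gamma=M$ and $M_\gamma>M$, invoking the definition of $\min(M)$ in the latter. Your write-up is in fact slightly more detailed than the paper's (you spell out why $V_M\subseteq V_{\mathcal{C}(M)}$ via Zariski closure and flag the ambient-space convention), but the underlying argument is identical.
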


\begin{proof}
The inclusion $\supset$ is clear. To establish the other inclusion, let $\gamma$ be a collection of vectors in $V_{C(M)}$. The collection $\gamma$ defines a matroid $N(\gamma)\geq M$, for which $\gamma \in \Gamma_{N(\gamma)}$. We consider two cases.

\medskip
{\bf Case~1.} If $N(\gamma)=M$, then $\gamma \in V_{M}$.

\medskip
{\bf Case~2.} If $N(\gamma)>M$, then there exists $N\in \min(M)$ such that $N(\gamma)>N$, implying $\gamma \in V_{\mathcal{C}(N)}$.

\medskip
Thus, the other inclusion holds, completing the proof.
\end{proof}

\medskip
\noindent{}
{\bf Decomposition Strategy.} For a point-line configuration $M$ the strategy proceeds as follows: 

\begin{itemize}
\item We begin by identifying the minimal matroids $\min(M)$ using Algorithm~\ref{algo m}.

\item Next, the circuit variety is decomposed using Proposition~\ref{deco circ}.

\item Apply the above two steps 
to each circuit variety that appears in the equation in Proposition~\ref{deco circ}. Continue this process iteratively for any new circuit varieties encountered, until each circuit variety falls into one of the following cases:

\medskip
{\bf Case~1.} If a circuit variety $V_{\mathcal{C}(N\rq)}$ corresponds to a nilpotent point-line configuration with no points of degree greater than two, it is replaced by $V_{N\rq}$. (See Theorem~\ref{nil coincide}(i)).

\medskip
{\bf Case~2.} If a circuit variety $V_{\mathcal{C}(N\rq)}$ corresponds to a configuration whose points have all degree at most two, and all its proper submatroids are nilpotent, it is replaced by  $V_{N\rq}\cup V_{U_{2,d}}$. (See Theorem~\ref{nil coincide}(ii)).

\medskip 
Note that in the previous cases we view $N$ as a point-line configuration by removing its loops and identifying its double points. After this step, we obtain a decomposition of $V_{\mathcal{C}(M)}$ as a union of matroid varieties.

\item For each matroid variety obtained at the end of step (3), establish its irreducibility by identifying the solvable matroids involved. (See Theorem~\ref{nil coincide}(iii)).

\item Finally, eliminate any redundant component from the decomposition, leaving the irreducible components of $V_{\mathcal{C}(M)}$.

\end{itemize}

\begin{remark}
For a given point-line configuration $M$, one of the main algebraic questions is to determine the defining equations of its associated matroid varieties, i.e., to find a complete set of generators for the associated ideal $I_M$ up to radical. The case where the configuration contains no points of degree greater than two was studied in \cite{Fatemeh4}. However, without this assumption, the problem becomes considerably more difficult. We emphasize that obtaining the irreducible decomposition of $V_{\mathcal{C}(M)}$ could be a crucial step in addressing this challenge. For example, this strategy has proven effective for the Pappus configuration, which will
appear in future work.
\end{remark}

\section{Examples}\label{examples}

In this section, we illustrate our algorithms on 
classical configurations of the form $v_{3}$. We note that the irreducible decompositions of these examples cannot be computed using any existing computer algebra systems. 
For clarity, Theorem~\ref{nil coincide} will be applied throughout this section without explicit mention. The lemmas in this section will be proven later in Section~\ref{appen}.

We have used the implementation of our algorithm to verify the minimal matroids. Algorithm~\ref{algo m}, implemented by Rémi Prébet, is available at the following link: \url{bit.ly/49GYXxC}.
\subsection{Fano plane}

Consider the {\bf Fano plane} $M_{\text{Fano}}$ in Figure~\ref{new figure} (Left). Using Algorithm~\ref{algo m}, we establish that the set 
$\min(M_{\text{Fano}})$ consists of the following matroids; see Figure~\ref{figure min}, from left to right:

\begin{itemize}
\item[{\rm (i)}] The uniform matroid $U_{2,7}$.
\item[{\rm (ii)}] The matroids $M_{\text{Fano}}(i)$ for $i\in [7]$.
\item[{\rm (iii)}] A line of $M_{\text{Fano}}$, with the remaining four points coinciding outside this line. 
\item[{\rm (iv)}] A matroid with one line containing three double points and a free point outside it 
\end{itemize}

\begin{figure}
    \centering
    \includegraphics[width=0.8\textwidth, trim=0 0 0 0, clip]{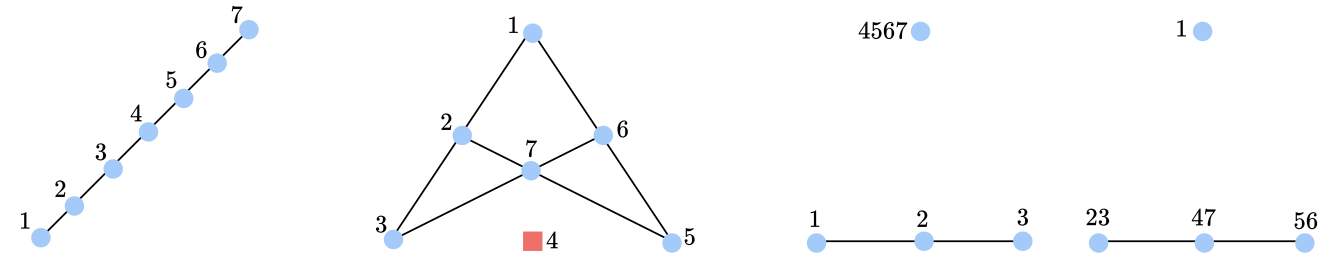}
    \caption{Minimal matroids of Fano configuration.}
    \label{figure min}
\end{figure}

There are seven matroids of the third type, each associated with a line of $M_{\text{Fano}}$, and seven matroids of the fourth type, each determined by the choice of a free point. We label these matroids as
$A_{j},B_{k}$, 
for $j,k\in [7]$. Given that $M_{\text{Fano}}$ is not realizable, this leads to the following decomposition:

\[V_{\mathcal{C}(M_{\text{Fano}})}=V_{\mathcal{C}(U_{2,7})}\bigcup_{i=1}^{7}V_{\mathcal{C}(M_{\text{Fano}}(i))}\bigcup_{j=1}^{7}V_{\mathcal{C}(A_{j})}\bigcup_{k=1}^{7}V_{\mathcal{C}(B_{k})}.\]

Since the matroids $U_{2,7},A_{j}$ and $B_{k}$ are all nilpotent, their
matroid and circuit varieties coincide. Furthermore, by Example~\ref{ej quad}, we have $V_{\mathcal{C}(M_{\text{Fano}}(i))}\subset V_{M_{\text{Fano}}(i)}\cup V_{U_{2,7}}$ for $i\in [7]$. Thus, we obtain:
\[V_{\mathcal{C}(M_{\text{Fano}})}=V_{U_{2,7}}\bigcup_{i=1}^{7}V_{M_{\text{Fano}}(i)}\bigcup_{j=1}^{7}V_{A_{j}}\bigcup_{k=1}^{7}V_{B_{k}}.\]
All matroids in this decomposition are solvable. Thus, 
all these varieties are irreducible. Additionally, the decomposition is non-redundant,
 as no two matroids are comparable with the dependency order. Therefore, the above decomposition is the irreducible decomposition of $V_{\mathcal{C}(M_{\text{Fano}})}$.

\begin{figure}[H]
    \centering
    \includegraphics[width=0.8\textwidth, trim=0 0 0 0, clip]{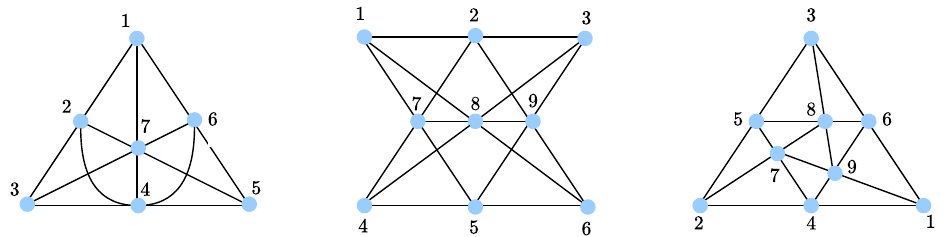}
    \caption{(Left) Fano plane; (Center) Pappus configuration; (Right) Second configuration $9_{3}$}
    \label{new figure}
\end{figure}

\subsection{MacLane configuration}
Consider the {\bf MacLane configuration} $M$ depicted in Figure~\ref{new figure 2} (Left), which is the unique $8_{3}$ configuration. 
Using Algorithm~\ref{algo m}, we find that $\min(M)$ consists of the following matroids; see Figure~\ref{min mac}, from left to right:
\begin{itemize}
\item[{\rm (i)}] The uniform matroid $U_{2,8}$. 
\item[{\rm (ii)}] The matroids $M(i)$ for $i\in [8]$. 
\item[{\rm (iii)}] A line of $M$, with the remaining five points coinciding at a single point outside it. 
\item[{\rm (iv)}] A line containing four points, where three are double points, along with a free point $x$ outside the line. The double points correspond to pairs that lie on a common line with $x$.
\end{itemize}

 \begin{figure}[H]
    \centering
    \includegraphics[width=0.8\textwidth, trim=0 0 0 0, clip]{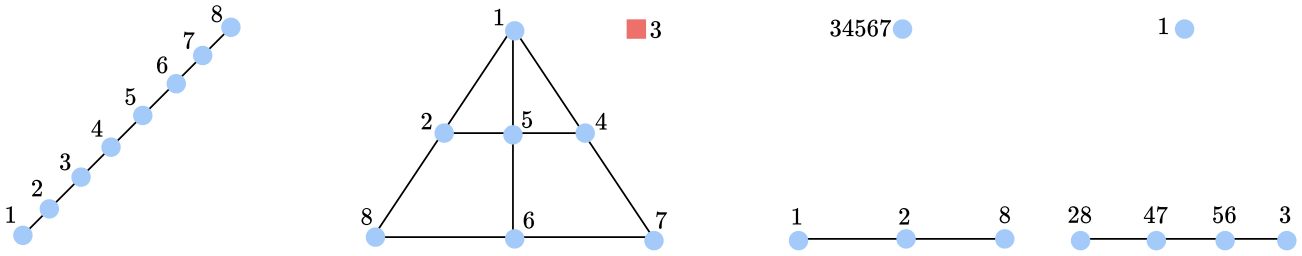}
    \caption{Minimal matroids of MacLane configuration $M$}
    \label{min mac}
\end{figure}

There are eight matroids of the third type, each associated with a line of $M$, and eight matroids of the fourth type, each determined by the choice of a free point. We label these matroids as
$A_{j},B_{k}$, 
for $j,k\in [8]$. This leads to the following decomposition:

\begin{equation}\label{deco mac}
V_{\mathcal{C}(M)}=V_{M}\cup V_{\mathcal{C}(U_{2,8})}\bigcup_{i=1}^{8}V_{\mathcal{C}(M(i))}\bigcup_{j=1}^{8}V_{\mathcal{C}(A_{j})}\bigcup_{k=1}^{8}V_{\mathcal{C}(B_{k})}.
\end{equation}

The following lemma provides the irreducible decomposition of each $V_{\mathcal{C}(M(i))}$.

\begin{lemma}\label{deco N}
Let~$N$~be~the~configuration~with~lines~$\{6,3,4\},\{6,2,7\},\{6,1,5\},\{2,3,5\},\{1,4,7\}$.~Then: 
\begin{equation*}
V_{\mathcal{C}(N)}=V_{N}\cup V_{U_{2,7}}\cup V_{N(6)}.
\end{equation*}
\end{lemma}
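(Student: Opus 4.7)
The plan is to apply the decomposition strategy from Section~\ref{strat} to $N$: first compute $\min(N)$ via Algorithm~\ref{algo m}, then invoke Proposition~\ref{deco circ} to obtain an initial decomposition of $V_{\mathcal{C}(N)}$, and finally show that each $V_{\mathcal{C}(N')}$ with $N'\in\min(N)$ is absorbed into one of $V_{N}$, $V_{U_{2,7}}$, $V_{N(6)}$. The key combinatorial features of $N$ are that point $6$ is the unique point of degree three while every other point lies on exactly two of the five lines, and that the six pairs of points not lying on any common line of $N$ are $\{1,2\},\{1,3\},\{2,4\},\{3,7\},\{4,5\},\{5,7\}$.

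Using these observations, a direct verification of Lemma~\ref{cond min} through Algorithm~\ref{algo c} yields $N[0]=\{6\}$, and therefore $\min(N)\cap\mathcal{C}=\{N(6)\}$. None of the six non-collinear pairs can be completed to a triple with all three pairs non-collinear, so $\mathcal{S}=\emptyset$ in Algorithm~\ref{algo B}. For every remaining triple $x\in\mathcal{T}$ and every identification of two points, tracing Algorithm~\ref{delta p} one step at a time shows that the cascade of forced line-merges, triggered by the high density of collinear pairs in $N$, terminates only when a single edge spans the entire ground set; consequently $V_{\mathcal{C}(N')}\subseteq V_{U_{2,7}}$ for every $N'\in\min\{\mathcal{A}\}\cup\min\{\mathcal{B}\}$. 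The orbit structure under $\operatorname{Aut}(N)$ (which has order $12$) keeps the number of cases small.

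For $V_{\mathcal{C}(N(6))}$, observe that $N\setminus 6$ consists of the two disjoint $3$-point lines $\{2,3,5\}$ and $\{1,4,7\}$ on $\{1,2,3,4,5,7\}$, which is nilpotent with every point of degree one; Theorem~\ref{nil coincide}(i) gives $V_{\mathcal{C}(N\setminus 6)}=V_{N\setminus 6}$, and adjoining the condition $v_{6}=0$ yields $V_{\mathcal{C}(N(6))}=V_{N(6)}$. Combined with the previous step, Proposition~\ref{deco circ} collapses to
\begin{equation*}
V_{\mathcal{C}(N)}=V_{N}\cup V_{U_{2,7}}\cup V_{N(6)}.
\end{equation*}
For irreducibility, $N$ is solvable since $N^{1}=\{6\}$ and $N^{2}=\emptyset$, so $V_{N}$ is irreducible by Theorem~\ref{nil coincide}(iii); irreducibility of $V_{U_{2,7}}$ and $V_{N(6)}$ follows from direct parameterizations. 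Non-redundancy follows by showing pairwise that generic elements of one component fail the defining conditions of the others, distinguishing the three varieties through $v_{6}$, the rank, and the configuration of the three planes $\langle v_{3},v_{4}\rangle$, $\langle v_{2},v_{7}\rangle$, $\langle v_{1},v_{5}\rangle$.

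The main obstacle I expect is the enumeration confirming that every minimal matroid in $\mathcal{A}\cup\mathcal{B}$ forces a total collapse to $U_{2,7}$ under Algorithm~\ref{delta p}. An alternative route that avoids this enumeration is a direct case analysis on $\gamma=(v_{1},\ldots,v_{7})\in V_{\mathcal{C}(N)}$: if $\rank(\gamma)\le 2$ then $\gamma\in V_{U_{2,7}}$; if $\rank(\gamma)=3$ and $v_{6}=0$ then $\gamma\in V_{N(6)}$; and if $\rank(\gamma)=3$ and $v_{6}\neq 0$, the three circuits through $6$ force the pairs $\{v_{3},v_{4}\}$, $\{v_{2},v_{7}\}$, $\{v_{1},v_{5}\}$ to lie in planes through $\langle v_{6}\rangle$, and together with the two remaining circuits this is the incidence structure of $N$; a deformation argument that splits any coincident planes then places $\gamma$ in $V_{N}$ via the irreducibility of $V_{N}$.
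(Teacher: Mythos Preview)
Your central claim about $\min\{\mathcal{A}\}$ is false, and this breaks the argument. You assert that every identification of two points triggers a cascade in Algorithm~\ref{delta p} that terminates only with a single edge on the whole ground set. Take the formula $F=(1\sim 5)$. After identifying $1$ and $5$, the line $\{6,1,5\}$ drops to size two and disappears, leaving
\[
(\mathcal{L})_{(1\sim 5)}=\bigl\{\{6,3,4\},\{6,2,7\},\{[1],2,3\},\{[1],4,7\}\bigr\},
\]
and one checks immediately that every pair of these edges meets in at most one point. Hence $(1\sim 5)\in\Pi_{X}$, and the corresponding minimal matroid $M_{(1\sim 5)}$ is a rank-three configuration isomorphic to the quadrilateral set $\mathrm{QS}$ (with the parallel pair $\{1,5\}$). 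Its circuit variety contains rank-three realizations and is therefore not contained in $V_{U_{2,7}}$. The paper's own computation of $\min(N)$ in fact produces four families of such rank-three minimal matroids in $\mathcal{A}$ (types (iii)--(vi) in the proof), obtained by identifying $\{1,4,6\}$, by identifying two disjoint collinear pairs, by identifying two ``crossing'' pairs such as $1\sim 7$ and $2\sim 5$, and by a single identification such as $1\sim 5$.

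The paper resolves these extra components not by pushing them into $V_{U_{2,7}}$ but by showing that their matroid varieties, together with $V_{N(j)}$ for every $j\neq 6$, are contained in $V_{N}$; this is where the actual work lies, and your proposal skips it entirely. Relatedly, your claim that $N[0]=\{6\}$ disagrees with the paper, which lists $N(i)$ for every $i\in[7]$ among the minimal matroids; even if one accepts your reading of Lemma~\ref{cond min}, the varieties $V_{N(j)}$ for $j\neq 6$ still need to be absorbed into $V_{N}$, not into $V_{U_{2,7}}$ (for instance $V_{N(1)}$ contains configurations with $\{2,3,5\}$ and $\{4,7\}$ spanning a plane, so it has rank three). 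Your alternative deformation route in the last paragraph is pointed in the right direction---it is essentially what the paper does for each of the surviving minimal matroids---but as stated it is only a sketch; the substantive step is exactly the verification that each of these rank-three degenerations lies in $\overline{\Gamma_{N}}$.
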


Using Lemma~\ref{deco N}, we have $\textstyle{\bigcup_{i=1}^{8}V_{\mathcal{C}(M(i))}=\bigcup_{i=1}^{8}V_{M(i)}\bigcup_{i=1}^{4}V_{D_{i}}\cup\bigcup_{i=1}^{8}V_{C_{i}}}$, where 
\begin{itemize}
\item The matroid $C_{i}$ is obtained by making one point of $M$ a loop, with the remaining points forming the uniform matroid $U_{2,7}$.
\item The matroid $D_{i}$ is obtained
from 
$M$ by making loops of two points that do not belong to a common line. The pairs of non-connected points are $\{1,3\},\{2,4\},\{5,7\},\{6,8\}$. 
\end{itemize}

The matroid $V_{M}$ has two irreducible components, denoted $V_{M}^{+}$ and $V_{M}^{-}$, see \textup{\cite[Table~4.1]{corey2023singular}}. Following the reasoning in \textup{\cite[Example~2.1]{corey2023singular}}, we can explicitly describe these two components. The matroids $U_{2,8},A_{j}$ and $B_{k}$ are all nilpotent, hence their circuit and matroid varieties coincide.
Additionally, note that $V_{C_{i}}\subset V_{U_{2,9}}$ for each $i\in [8]$. Furthermore, we have the following lemma.

\begin{lemma}\label{mov inc}
For every $j\in [8]$, we have $V_{A_{j}}\subset V_{M}$.
\end{lemma}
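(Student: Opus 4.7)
The strategy is to realize every realization of $A_j$ as a limit of realizations of $M$ via an explicit one-parameter family. Fix the line $\ell_j = \{a,b,c\}$ of $M$ corresponding to $A_j$. Since $V_M$ is Zariski closed, it suffices to prove $\Gamma_{A_j} \subseteq V_M$. Both $V_M$ and $\Gamma_{A_j}$ are invariant under the action of $GL_3 \times (\mathbb{C}^*)^8$ (change of basis of $\mathbb{C}^3$ combined with individual rescaling of the eight vectors), and this group acts transitively on $\Gamma_{A_j}$: $GL_3$ matches both the $2$-plane spanned by $\gamma_a,\gamma_b,\gamma_c$ and the $1$-dimensional subspace containing the coincident $\gamma_d,\ldots,\gamma_h$, the $3$-transitivity of $PGL_2$ on a projective line aligns the three collinear points projectively, and the $(\mathbb{C}^*)^8$-factor then normalizes the resulting vectors. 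Hence it is enough to exhibit one realization of $A_j$ inside $V_M$.

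Next, I would pick a realization $\nu$ of $M$ with $\ell_j$ placed at $\{z=0\} \subset \mathbb{C}^3$, and, after individual rescaling, $\nu_d,\ldots,\nu_h$ normalized to have third coordinate $1$. Setting $T_t = \mathrm{diag}(t,t,1)$ for $t \in \mathbb{C}^*$, I define
\[
\mu(t)_i = \begin{cases} \nu_i, & i \in \{a,b,c\}, \\ T_t\,\nu_i, & i \in \{d,e,f,g,h\}, \end{cases}
\]
which coincides with $T_t \nu$ up to rescaling the first three vectors by $1/t$, and therefore lies in $\Gamma_M \subseteq V_M$ for every $t \neq 0$. As $t \to 0$, the vectors $\mu(t)_a,\mu(t)_b,\mu(t)_c$ remain at three distinct collinear positions on $\{z=0\}$, while $\mu(t)_d,\ldots,\mu(t)_h$ all tend to the common point $(0,0,1)\notin \ell_j$. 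The limit $\mu(0)$ is thus a realization of $A_j$, and by closedness of $V_M$ it lies in $V_M$; combined with the transitivity argument, this yields $\Gamma_{A_j}\subseteq V_M$ and hence $V_{A_j}\subseteq V_M$.

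The main point to watch is the direction of contraction: the naive scaling $\mathrm{diag}(1,1,t)$ would collapse the off-$\ell_j$ points onto $\ell_j$ instead of to a common point off $\ell_j$, so one must instead contract along the directions \emph{parallel} to $\ell_j$ while correcting the on-line vectors by individual rescaling, as above. Verifying the transitivity of $GL_3 \times (\mathbb{C}^*)^8$ on $\Gamma_{A_j}$ is straightforward but should be recorded carefully to ensure that the projective positions on $\ell_j$, the direction of the common off-line point, and the individual scalings are all accounted for.
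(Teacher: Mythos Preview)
Your argument is correct and is essentially the same as the paper's: the paper derives Lemma~\ref{mov inc} from the general Lemma~\ref{uti}, whose proof normalizes a realization of $A_j$ (using that $A_j$ has a unique projective realization, which is your transitivity statement) and then exhibits the same one-parameter scaling family $\tau_\lambda$ in $\Gamma_M$ limiting to it. Your $T_t=\mathrm{diag}(t,t,1)$ acting on the off-line vectors is exactly the paper's replacement $(x_i,y_i,1)\mapsto(\lambda x_i,\lambda y_i,1)$.
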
 

Substituting everything into  Equation~\eqref{deco mac}, we obtain:

\begin{equation}\label{maclane}
V_{\mathcal{C}(M)}=V_{M}^{+}\cup V_{M}^{-}\cup V_{U_{2,8}}\bigcup_{i=1}^{8}V_{M(i) }\bigcup_{i=1}^{4}V_{D_{i}}\bigcup_{k=1}^{8}V_{B_{k}}.
\end{equation}

All matroids in this decomposition are solvable, and their varieties are therefore irreducible. Moreover, the decomposition is easily verified as non-redundant irreducible decomposition of $V_{\mathcal{C}(M)}$.

\begin{figure}[H]
    \centering
    \includegraphics[width=0.8\textwidth, trim=0 0 0 0, clip]{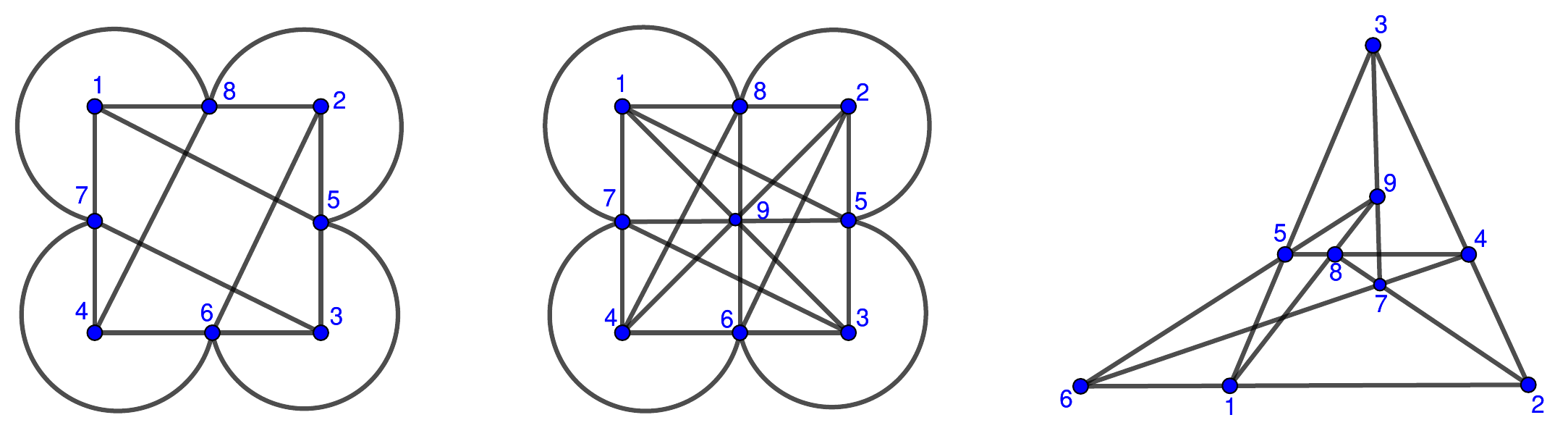}
    \caption{(Left) MacLane configuration; (Center) Affine plane; (Right) Third configuration $9_{3}$}
    \label{new figure 2}
\end{figure}

\subsection{Affine plane of order $3$}
Consider the {\bf Affine plane} $M_{\text{affine}}$ of order three, shown in Figure~\ref{new figure 2} (Center). This configuration represents the affine plane of order $3$ and forms an $S(2,3,9)$ system. Using Algorithm~\ref{algo m}, we determine the minimal matroids of $M_{\text{affine}}$, which are depicted in Figure~\ref{new figure 21}, from left to right:   
\begin{itemize}
\item[{\rm (i)}] The uniform matroid $U_{2,9}$.
\item[{\rm (ii)}] 
The matroids $M_{\text{affine}}(i)$ for $i\in [9]$.
\item[{\rm (iii)}] A line from $M_{\text{affine}}$ with the remaining six points coinciding in a point outside the line. 
\item[{\rm (iv)}] A line containing four points and an additional free point $x$ outside of it. Each of the four points is a double point, with pairs of points that lie on a common line with $x$ coinciding.
\end{itemize}

\begin{figure}[H]
    \centering
    \includegraphics[width=0.8\textwidth, trim=0 0 0 0, clip]{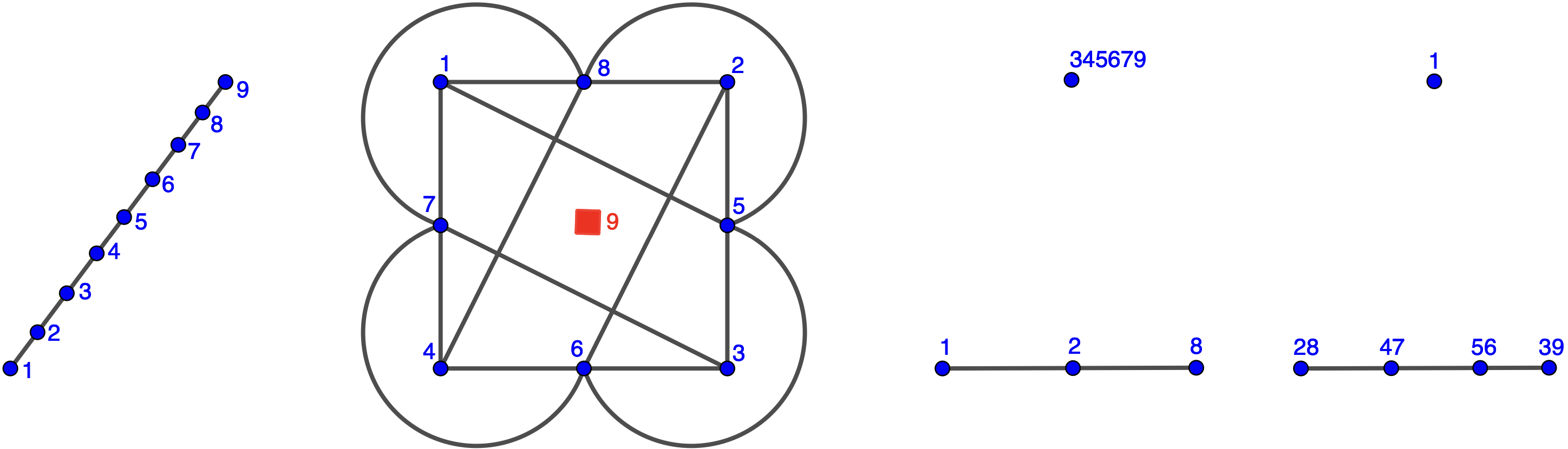}
    \caption{Minimal matroids of $M_{\text{affine}}$}
    \label{new figure 21}
\end{figure}

There are $12$ matroids of the third type, corresponding to the choice of the line, and nine matroids of the fourth type, each determined by the choice of the free point.
We label these matroids as $A_{j},B_{k}$ for $j\in [12]$ and $k\in [9]$. This leads us to the following decomposition:
\begin{equation}\label{deco mac 2}
V_{\mathcal{C}(M_{\text{affine}})}=V_{M_{\text{affine}}}\cup V_{\mathcal{C}(U_{2,9})}\bigcup_{i=1}^{9}V_{\mathcal{C}(M_{\text{affine}}(i))}\bigcup_{j=1}^{12}V_{\mathcal{C}(A_{j})}\bigcup_{k=1}^{9}V_{\mathcal{C}(B_{k})}.
\end{equation}

We observe the following facts, which lead to additional matroids appearing in the decomposition.
\begin{itemize}
\item Since each $M_{\text{affine}}(i)$ consists of a loop alongside the MacLane configuration, the variety $V_{M_{\text{affine}}(i)}$
 likewise has two irreducible components, denoted $V_{i}^{+}$ and $V_{i}^{-}$.
\item For any distinct $i,j\in [9]$, let $D_{i,j}$ denote the matroid obtained from $M_{\text{affine}}$ by making the points $i$ and $j$ loops.

\item Let $E_{i}$, with $i\in [12]$, denote the matroids  obtained from $M_{\text{affine}}$ by making the three points of one of its lines loops.
\item The variety $V_{M_{\text{affine}}}$ has two irreducible components, $V_{\text{affine}}^{+}$ and $V_{\text{affine}}^{-}$, as shown in \textup{\cite[Table~4.1]{corey2023singular}}. 
\end{itemize}

Using the isomorphism between $V_{\mathcal{C}(M_{\text{affine}}(i))}$ and $V_{\mathcal{C}(M_{\text{MacLane}})}$, applying Equation~\eqref{maclane} we obtain:

\begin{equation}\label{maclane 2}
\bigcup_{i=1}^{9}V_{\mathcal{C}(M_{\text{affine}}(i))}\subset V_{U_{2,9}}\bigcup_{i=1}^{9}V_{i}^{+}\bigcup_{i=1}^{9}V_{i}^{-} \bigcup_{i\neq j}V_{D_{i,j}}\bigcup_{j=1}^{12}V_{E_{j}}\bigcup_{k=1}^{9}V_{\mathcal{C}(B_{k})}
\end{equation}

\begin{lemma}\label{mov inc 2}
We have $V_{A_{j}}\subset V_{M_{\text{affine}}}$ for all $j\in [12]$.
\end{lemma}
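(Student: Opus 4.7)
The plan is to realize every generic configuration in $\Gamma_{A_j}$ as the $t \to 0$ limit of an explicit one-parameter family of configurations lying in $\Gamma_{M_{\text{affine}}}$, thereby placing $\Gamma_{A_j}$ inside $V_{M_{\text{affine}}}$. Since $V_{M_{\text{affine}}}$ is closed and invariant under the action of $GL_3(\mathbb{C}) \times (\mathbb{C}^*)^9$ (global coordinate changes and independent per-vector scalings), and this action has a dense open orbit on the irreducible variety $\Gamma_{A_j}$, it suffices to produce a single generic realization of $A_j$ inside $V_{M_{\text{affine}}}$.

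After relabeling, let $\{1,2,3\}$ be the three points on the line of $M_{\text{affine}}$ corresponding to $j$. I would choose coordinates on $\mathbb{C}^3$ so that the $2$-plane $\ell_0 = \{z = 0\}$ contains $v_1^*, v_2^*, v_3^*$, fix any realization $\gamma^* = (v_1^*, \ldots, v_9^*) \in \Gamma_{M_{\text{affine}}}$ with $v_1^*, v_2^*, v_3^* \in \ell_0$ and $(v_i^*)_3 \neq 0$ for $i \geq 4$, and consider the family
\[
\gamma(t) := \bigl(v_1^*,\, v_2^*,\, v_3^*,\, g_t v_4^*,\, \ldots,\, g_t v_9^*\bigr), \qquad g_t := \operatorname{diag}(t, t, 1),
\]
for $t \in \mathbb{C}$. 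The crux is to verify that $\gamma(t) \in \Gamma_{M_{\text{affine}}}$ for all $t \neq 0$. The $3$-element circuits of $M_{\text{affine}}$ split into three types: the distinguished circuit $\{1,2,3\}$ is unchanged; a circuit $\{i,j,k\} \subset \{4,\ldots,9\}$ remains dependent because $g_t \in GL_3$ preserves linear dependence; and for a mixed circuit $\{k,i,j\}$ with $k \in \{1,2,3\}$ and $i,j \in \{4,\ldots,9\}$, expanding the determinant along the first row and using $(v_k^*)_3 = 0$ yields
\[
\det\bigl(v_k^*,\, g_t v_i^*,\, g_t v_j^*\bigr) \;=\; t \cdot \det\bigl(v_k^*,\, v_i^*,\, v_j^*\bigr) \;=\; 0.
\]
The same identity, now with a nonzero right-hand side, shows that independent triples stay independent for $t \neq 0$; any $4$-element dependency is automatic in rank $3$, so $\gamma(t)$ is genuinely a realization of $M_{\text{affine}}$.

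Letting $t \to 0$, the family degenerates to $\gamma(0) = (v_1^*, v_2^*, v_3^*, (0,0,(v_4^*)_3), \ldots, (0,0,(v_9^*)_3))$, whose last six entries are scalar multiples of $(0,0,1)$. Hence $\gamma(0)$ realizes $A_j$ and, being a limit of points of $\Gamma_{M_{\text{affine}}}$, lies in $V_{M_{\text{affine}}}$. Acting on $\gamma(0)$ by $GL_3 \times (\mathbb{C}^*)^9$ sweeps out a dense open subset of $\Gamma_{A_j}$ within $V_{M_{\text{affine}}}$, and closedness gives $V_{A_j} \subset V_{M_{\text{affine}}}$. The construction is symmetric in the choice of line, covering all $j \in [12]$. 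The main obstacle is the mixed-circuit verification, which however reduces transparently to the single determinant identity above once the coordinate system placing $\ell_0$ in $\{z=0\}$ is fixed.
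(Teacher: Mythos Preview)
Your proposal is correct and follows essentially the same approach as the paper: both degenerate a realization of $M_{\text{affine}}$ by applying $\operatorname{diag}(t,t,1)$ to the six points off the chosen line and let $t\to 0$. The paper packages this as a general lemma (Lemma~\ref{uti}) valid for any realizable point-line configuration with a three-point line, whereas you argue directly for $M_{\text{affine}}$ and supply the explicit determinant verification (which the paper simply asserts); minor points are that your ``expanding along the first row'' should read ``first column'', and the case of two points on the line with one off it is not literally covered by your displayed identity, though it is equally immediate.
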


We know that the circuit variety and the matroid variety coincide for the matroids $U_{2,9},A_{j},B_{k}$ and that $V_{A_{j}}\subset V_{M_{\text{affine}}}$. From this, and incorporating Equation~\eqref{maclane 2} within Equation~\eqref{deco mac 2}, we obtain:
\[V_{\mathcal{C}(M_{\text{affine}})}=V_{\text{affine}}^{+}\cup V_{\text{affine}}^{-}\cup V_{U_{2,9}}\bigcup_{i=1}^{9}V_{i}^{+}\bigcup _{i=1}^{9}V_{i}^{-}\bigcup_{i=1}^{12}V_{E_{i}}\bigcup_{i\neq j}V_{D_{i,j}}\bigcup_{k=1}^{9}V_{B_{k}}.\]

All the matroids in this decomposition are solvable, and therefore, their associated varieties are irreducible by Theorem~\ref{nil coincide}. It is also straightforward to verify that this decomposition is non-redundant. Consequently, this is the irreducible decomposition of $V_{\mathcal{C}(M_{\text{affine})}}$.

\subsection{Pappus configuration} Consider the {\bf Pappus configuration} $M_{\text{Pappus}}$ depicted in Figure~\ref{new figure} (Center). This is one of the three $9_{3}$ configurations. Following Algorithm~\ref{algo m}, we find that $\min(M_{\text{Pappus}})$ consists of the following matroids; see Figure~\ref{min pap}, from left to right: 
\begin{itemize}
\item[{\rm (i)}] The three matroids obtained by adding one of the circuits $\{1,4,9\},\{2,5,8\}$, or $\{3,6,7\}$. The one by adding the circuit $\{1,4,9\}$ is depicted in Figure~\ref{min pap} (Left).
\item[{\rm (ii)}] The matroids formed by identifying three points that are pairwise connected by lines of $M_{\text{Pappus}}$ but do not lie on a single line.  
\item[{\rm (iii)}] The matroid derived from identifying $1=3$, $4=6$, and $7=9$, along with all the matroids obtained by applying automorphisms of $M_{\text{Pappus}}$. 
\end{itemize}

\begin{figure}[H]
    \centering
    \includegraphics[width=0.8\textwidth, trim=0 0 0 0, clip]{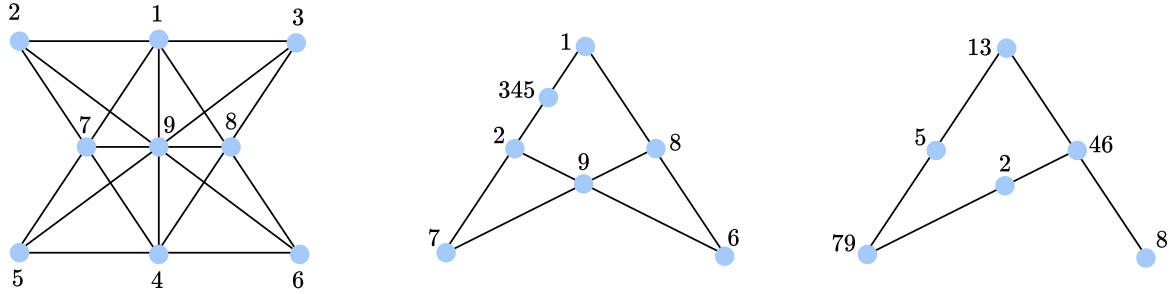}
    \caption{Minimal matroids of Pappus configuration (up to isomorphism)}
    \label{min pap}
\end{figure}

We denote the matroids of the first, second, and third types by $A_{i},B_{j}$ and $C_{k}$, respectively, for $i\in [3],j\in [18],$ and $k\in [9]$. 
From this, we derive the following decomposition:

\begin{equation}\label{equ papus}
V_{\mathcal{C}(M_{\text{Pappus}})}=V_{M_{\text{Pappus}}}\bigcup_{i=1}^{3}V_{\mathcal{C}(A_{i})}\bigcup_{j=1}^{18}V_{\mathcal{C}(B_{j})}\bigcup_{k=1}^{9}V_{\mathcal{C}(C_{k})}.
\end{equation}

Let $D_{i}$, with $i\in [9]$, denote the matroids that consist of a line with five points and a free point $x$ outside of it, where exactly three of the five points are double points, in which pairs of points that belong to a common line with $x$ coincide. We will use the following lemma; see Section~\ref{appen} for its proof. 

\begin{lemma}\label{dl}
The following statements hold:
\begin{itemize}
\item[{\rm (i)}] We have
\begin{equation}\label{ecu dl}
\bigcup_{j=1}^{3}V_{\mathcal{C}(A_{j})}\subset V_{M_{\text{Pappus}}}\bigcup_{i=1}^{9}V_{M_{\text{Pappus}}(i)}\bigcup_{j=1}^{18}V_{\mathcal{C}(B_{j})}\bigcup_{k=1}^{9}V_{\mathcal{C}(C_{k})}\bigcup_{i=1}^{9}V_{D_{i}} \cup V_{U_{2,9}}.
\end{equation}
\item[{\rm (ii)}] For $j\in [18]$, $V_{\mathcal{C}(B_{j})}\subset V_{M_{\text{Pappus}}}\cup V_{U_{2,9}}$.
\item[{\rm (iii)}] For $k\in [9]$, $V_{\mathcal{C}(C_{k})}=V_{C_{k}}\subset V_{M_{\text{Pappus}}}$.
\end{itemize}
\end{lemma}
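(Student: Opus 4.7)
The overall strategy is to repeatedly invoke Proposition~\ref{deco circ} and the decomposition strategy of Section~\ref{strat}, reducing each circuit variety on the left-hand side to the union of matroid varieties on the right-hand side.

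\textbf{Part (i).} For each $j \in [3]$, the matroid $A_{j}$ is strictly greater than $M_{\text{Pappus}}$ (obtained by declaring one of the transversal triples $\{1,4,9\}$, $\{2,5,8\}$, or $\{3,6,7\}$ to be a circuit), so $V_{A_{j}} \subset V_{M_{\text{Pappus}}}$ follows immediately from the definitions. Applying Proposition~\ref{deco circ}, we obtain
\[
V_{\mathcal{C}(A_{j})} \;=\; V_{A_{j}} \;\cup\; \bigcup_{N \in \min(A_{j})} V_{\mathcal{C}(N)}.
\]
The plan is then to run Algorithm~\ref{algo m} on each $A_{j}$, exploiting the symmetry (the three $A_{j}$ are related by the automorphism group of Pappus cycling the transversal triples), and to verify case-by-case that every $N \in \min(A_{j})$ falls into one of the following categories: a matroid with a loop whose non-loop part is $\leq M_{\text{Pappus}}$ (contributing to some $V_{M_{\text{Pappus}}(i)}$); an identification of three non-collinear Pappus points (a $B_{j}$); a triple identification as in $C_{k}$; a line-of-five configuration as in $D_{i}$; or a matroid whose circuit variety lies in $V_{U_{2,9}}$. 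This case analysis is the main bookkeeping step.

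\textbf{Part (ii).} For each $B_{j}$, apply Proposition~\ref{deco circ} once more:
\[
V_{\mathcal{C}(B_{j})} \;=\; V_{B_{j}} \;\cup\; \bigcup_{N \in \min(B_{j})} V_{\mathcal{C}(N)}.
\]
The plan is first to show $V_{B_{j}} \subset V_{M_{\text{Pappus}}}$ by exhibiting an explicit deformation: a generic Pappus configuration can be degenerated so that the three relevant pairwise-collinear points collide, and taking Zariski closure yields the required inclusion. Then, using Algorithm~\ref{algo m}, one checks that every minimal matroid above $B_{j}$ has either a loop or enough extra dependencies to force the entire configuration into $V_{U_{2,9}}$, since adding a single further identification to three non-collinear points collapses the remaining Pappus structure.

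\textbf{Part (iii).} First, verify directly from the definition of $C_{k}$ (three pairwise identifications of Pappus points that are not collinear to one another) that $C_{k}$ is a nilpotent point-line configuration with no point of degree greater than two. By Theorem~\ref{nil coincide}(i), this yields $V_{\mathcal{C}(C_{k})} = V_{C_{k}}$. For the inclusion $V_{C_{k}} \subset V_{M_{\text{Pappus}}}$, the plan is to construct an explicit one-parameter family of realizations of $M_{\text{Pappus}}$ whose limit (as the parameter tends to zero) forces the required pairs to coincide, thereby realizing an arbitrary generic element of $\Gamma_{C_{k}}$ as a limit of Pappus realizations.

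\textbf{Main obstacle.} The principal difficulty lies in Part~(i): the set $\min(A_{j})$ contains a large number of matroids, and for each one we must either identify it among $M_{\text{Pappus}}(i), B_{j}, C_{k}, D_{i}$, or show that its circuit variety is already contained in $V_{U_{2,9}}$. This case analysis is tedious though mechanical, and the symmetry of the Pappus configuration under its automorphism group is essential to keeping it tractable. A secondary obstacle is the explicit deformation needed in Parts~(ii) and (iii), which requires a careful parametrization of Pappus realizations so that the desired degenerations lie in the closure.
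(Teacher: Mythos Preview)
Your plan for parts~(ii) and~(iii) is essentially the paper's, though in~(ii) the paper short-circuits your analysis of $\min(B_{j})$ by invoking Theorem~\ref{nil coincide}(ii) directly: each $B_{j}$, after identifying the triple point, is a configuration with all points of degree at most two and every proper submatroid nilpotent, so $V_{\mathcal{C}(B_{j})}=V_{B_{j}}\cup V_{U_{2,9}}$ at once. The deformation argument you sketch for $V_{B_{j}}\subset V_{M_{\text{Pappus}}}$ and $V_{C_{k}}\subset V_{M_{\text{Pappus}}}$ is exactly what the paper does, using an explicit two-parameter family of Pappus realizations (Theorem~\ref{poli}).

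There is, however, a genuine gap in part~(i). Your claim that $V_{A_{j}}\subset V_{M_{\text{Pappus}}}$ ``follows immediately from the definitions'' is false: the dependency order $A_{j}>M_{\text{Pappus}}$ gives only $\Gamma_{A_{j}}\subset V_{\mathcal{C}(M_{\text{Pappus}})}$, not $\Gamma_{A_{j}}\subset V_{M_{\text{Pappus}}}$. Containment of matroid varieties under degeneration is precisely the nontrivial content of Question~\ref{question} in Section~\ref{appen}, and it fails in general. The paper proves this inclusion by writing down the two-parameter moduli space of Pappus (matrix~\eqref{matrs}) and observing that the extra collinearities $\{1,4,9\},\{2,5,8\},\{3,6,7\}$ correspond to vanishing of three explicit polynomials in $x,y$; a generic perturbation of $(x,y)$ moves any realization of $A_{j}$ into $\Gamma_{M_{\text{Pappus}}}$.

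This same gap propagates into your case analysis. Among the matroids $N\geq A_{j}$ one must account for are the simple rank-three configurations obtained from $M_{\text{Pappus}}$ by adding two or all three of the transversal triples as lines. These have no loops, no double points, and are not any $B_{j}$, $C_{k}$, or $D_{i}$; they do not fit any of your five categories. The paper identifies exactly these as the residual cases and handles them with the same perturbation argument, showing each such $V_{N}\subset V_{M_{\text{Pappus}}}$. Without that deformation step your recursion through $\min(A_{j})$ cannot terminate inside the right-hand side of~\eqref{ecu dl}.
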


We know that $V_{M_{\text{Pappus}}}$ is irreducible, as indicated in \textup{\cite[Table~4.1]{corey2023singular}}. Additionally, since the matroids $D_{k}$ are nilpotent, 
their circuit and matroid varieties coincide. 
This, along with Lemma~\ref{dl}, shows that:
\begin{equation}\label{equ papus 2}
V_{\mathcal{C}(M_{\text{Pappus}})}=V_{M_{\text{Pappus}}}\cup V_{U_{2,9}}\bigcup_{i=1}^{9}V_{\mathcal{C}(M_{\text{Pappus}}(i))}\bigcup_{k=1}^{9}V_{D_{k}}.
\end{equation}

To determine the irreducible decomposition of $V_{\mathcal{C}(M_{\text{Pappus}})}$, we must first compute the irreducible decomposition of $V_{\mathcal{C}(M_{\text{Pappus}}(i))}$. 
Since the selections of $i\in [9]$ are all analogous, we assume that our matroid is $M_{\text{Pappus}}(9)$. Following Algorithm~\ref{algo m}, we find that the minimal matroids of $M_{\text{Pappus}}(9)$ that are not redundant in Equation~\eqref{equ papus 2} are:

\begin{itemize}
\item The matroids derived from $M_{\text{Pappus}}(9)$ by making one of the points in $[8]$ a loop.
\item The matroid obtained from $M_{\text{Pappus}}(9)$ by adding one of the circuits $\{2,5,8\}$ or $\{3,6,7\}$.
\end{itemize}

We define the following matroids; see Figure~\ref{figura} from left to right:

\begin{itemize}
\item We denote by $H_{i}$ for $i\in [18]$ the matroids that are obtained from $M_{\text{Pappus}}$ by making one of its points a loop and adding one of the circuits $\{1,4,9\},\{2,5,8\}$ or $\{3,6,7\}$.
\item We denote by $F_{i}$ for $i\in [9]$ the matroids obtained from $M_{\text{Pappus}}$ by turning two points, which are not on the same line, into loops.
\item We denote by $G_{i}$ for $i\in [27]$ the matroids obtained from $M_{\text{Pappus}}$  by turning two points, which are  on the same line, into loops. 
\item We denote $I_{1},I_{2},I_{3}$ as the matroids derived from $M_{\text{Pappus}}$ by turning all three points of one of the triples $\{1,4,9\},\{3,6,7\}$ or $\{2,5,8\}$ into loops.
\end{itemize}

\begin{figure}[H]
    \centering
    \includegraphics[width=0.8\textwidth, trim=0 0 0 0, clip]{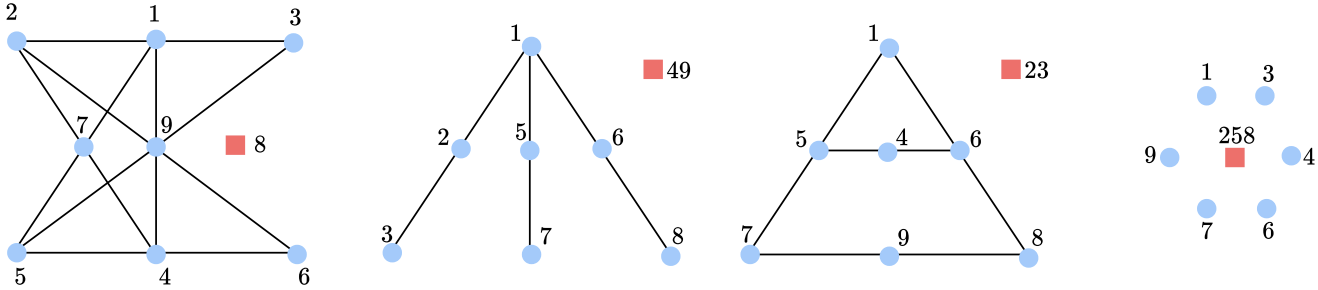}
    \caption{Matroids $H_{i},F_{i},G_{i}$ and $I_{i}$.}
    \label{figura}
\end{figure}

Consequently, from the discussion above on the matroids $M_{\text{Pappus}}(i)$, Equation~\eqref{equ papus 2}
becomes:
\begin{equation}\label{equ papus 3}
V_{\mathcal{C}(M_{\text{Pappus}})}=V_{M_{\text{Pappus}}}\cup V_{U_{2,9}}\bigcup_{i=1}^{9}V_{M_{\text{Pappus}}(i)}\bigcup_{j=1}^{9}V_{\mathcal{C}(F_{j})}\bigcup_{k=1}^{27}V_{\mathcal{C}(G_{k})}\bigcup_{l=1}^{18}V_{\mathcal{C}(H_{l})}\bigcup_{r=1}^{9}V_{D_{r}}.
\end{equation}

We will also apply the following lemma, whose proof will be given in Section~\ref{appen}. 

\begin{lemma}\label{hl1}
The following statements hold: 
\begin{itemize}
\item[{\rm (i)}] We have
\begin{equation}\label{n ecu}
\bigcup_{l=1}^{18}V_{\mathcal{C}(H_{l})}\subset V_{M_{\text{Pappus}}}\cup V_{U_{2,9}}\bigcup_{i=1}^{9}V_{M_{\text{Pappus}}(i)}\bigcup_{j=1}^{9}V_{\mathcal{C}(F_{j})}\bigcup_{k=1}^{27}V_{\mathcal{C}(G_{k})}\bigcup_{r=1}^{9}V_{D_{r}}.
\end{equation}
\item[{\rm (ii)}] For $k\in [27]$, $V_{\mathcal{C}(G_{k})}\subset V_{M_{\text{Pappus}}}$.
\item[{\rm (iii)}] For $i\in [9]$,  $V_{M_{\text{Pappus}}(i)}\subset V_{M_{\text{Pappus}}}$.
\item[{\rm (iv)}] $V_{\mathcal{C}(F_{j})}=V_{F_{j}}\cup V_{F_{j}\rq}$, where $F_{j}\rq$ is the matroid obtained from $F_{j}$ by making its unique point of degree three a loop.
\end{itemize}
\end{lemma}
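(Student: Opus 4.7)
The plan is to prove the four parts in order of increasing difficulty, starting with (iii), then (ii), then (iv), and finally the main statement (i). Parts (ii) and (iii) should follow from degeneration arguments: given a realization of the more degenerate matroid, construct a one-parameter family of Pappus realizations that degenerates to it, thereby placing the realization in the Zariski closure $V_{M_{\text{Pappus}}}$.

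Concretely, for part (iii), given $\gamma\in \Gamma_{M_{\text{Pappus}}(i)}$ with $\gamma_i=0$ and the other eight vectors realizing $M_{\text{Pappus}}\setminus i$, extend the eight-point configuration to a full Pappus realization $\widetilde\gamma$ (which is possible since Pappus is realizable and the eight-point restriction is a non-degenerate subconfiguration that extends). Then the one-parameter family obtained by replacing $\widetilde\gamma_i$ with $t\widetilde\gamma_i$ lies in $\Gamma_{M_{\text{Pappus}}}$ for all $t\neq 0$, and sends $\gamma$ to its limit as $t\to 0$; thus $V_{M_{\text{Pappus}}(i)}\subset V_{M_{\text{Pappus}}}$. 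For part (ii), the two loops of $G_k$ sit on a common Pappus line $\ell$ that also contains a third, non-loop point; scaling both loop vectors simultaneously by $t$ preserves all the Pappus dependencies (including the rank-$2$ condition on $\ell$, which is still witnessed by two non-zero vectors in the limit), so the same limiting procedure realizes every element of $\Gamma_{G_k}$ as a limit of Pappus realizations.

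For part (iv), I would read off the explicit structure of $F_j$: the two loops sit at non-collinear points of Pappus, so deleting them leaves a rank-$3$ point-line configuration on seven points with a unique point of degree three (call it $x$). Any $\gamma\in V_{\mathcal{C}(F_j)}$ has its two loop coordinates zero and the remaining seven vectors including the dependencies of this seven-point restriction. A direct case split on whether the rank-two flats through $x$ are distinct or collapse shows exactly two generic behaviours: either $\gamma$ realizes $F_j$ itself (component $V_{F_j}$), or the degree-three point must coincide with some other point, forcing $x$ to become a loop as well, yielding $V_{F_j'}$. The reverse inclusions are automatic from $F_j,F_j'\geq F_j$, giving the claimed decomposition.

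The main obstacle is part (i). Here each $H_l$ simultaneously enforces the full Pappus dependencies (with one loop) \emph{and} one of the extra triples $\{1,4,9\},\{2,5,8\},\{3,6,7\}$ as a circuit, so by Pappus's theorem the realization is overdetermined. The plan is, for each $H_l$, to fix a realization $\gamma\in V_{\mathcal{C}(H_l)}$ and distinguish cases according to the matroid $N(\gamma)\geq H_l$ induced by $\gamma$: if the eight non-loop vectors already realize the Pappus configuration then $\gamma\in V_{M_{\text{Pappus}}(i)}$, and otherwise the extra collinearity forces either additional coincidences (landing in some $V_{F_j}$, $V_{G_k}$, or $V_{D_r}$) or a total collapse to rank two (landing in $V_{U_{2,9}}$). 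The delicate part is to carry out this case analysis uniformly across all eighteen $H_l$ and to verify, via the scaling arguments of parts (ii)--(iii) together with the Pappus rigidity, that every such degeneration can be realized as a limit of configurations from the listed varieties; I would organize it by exploiting the automorphism group of $M_{\text{Pappus}}$ to reduce the eighteen cases to a small number of representatives.
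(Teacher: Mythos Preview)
Your high-level strategy for (ii) and (iii)---degenerate a Pappus realization by scaling one or two vectors to zero---is correct, but you have hidden the entire difficulty in the phrase ``extend the eight-point configuration to a full Pappus realization $\widetilde\gamma$.'' This extension is precisely what must be proved: given an arbitrary realization of $M_{\text{Pappus}}\setminus i$ (respectively $M_{\text{Pappus}}\setminus\{i,j\}$), you need to produce a ninth (respectively eighth and ninth) vector so that \emph{all} Pappus incidences hold and no spurious ones appear. For one missing point this can be extracted from the classical Pappus theorem, but you do not say so; for two missing collinear points the required concurrence is not an obvious consequence of any standard incidence theorem. The paper avoids this entirely by working inside the explicit two-parameter description of $\mathcal{M}(M_{\text{Pappus}})$ obtained from Theorem~\ref{poli}: every $\gamma$ in $\Gamma_{M_{\text{Pappus}}(i)}$ or $\Gamma_{G_k}$ is first normalized to a specific matrix shape, and then the missing columns are written down explicitly as polynomials in the parameters $x,y$, after which a small perturbation of $(x,y)$ yields an honest Pappus realization. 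Your scaling family is the correct limiting mechanism, but without the explicit parametrization you have no construction of $\widetilde\gamma$.

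For (i) the gap is more serious. Your plan is to case-split on $N(\gamma)$ for $\gamma\in V_{\mathcal{C}(H_l)}$, but you give no mechanism for enumerating which $N\geq H_l$ can actually occur and which of those are \emph{not} already absorbed by the right-hand side. The paper's proof does exactly this: it runs Algorithm~\ref{algo m} to list the matroids $N\geq H_l$ not dominated by any matroid on the right, finds (up to automorphism) that they are $M_{\text{Pappus}}(4)$ with some subset of $\{2,5,8\},\{3,6,7\}$ added as extra lines, and then uses the parametrization again to show that for each such $N$ one can perturb $(x,y)$ to kill the extra minors and land in $\Gamma_{M_{\text{Pappus}}(4)}$, giving $V_N\subset V_{M_{\text{Pappus}}(4)}$. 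Without the algorithm to cut the case analysis down, and without the parametrization to handle the surviving cases, your outline for (i) is not yet a proof. For (iv) your direct case analysis is a reasonable alternative to the paper's approach, which simply observes that $F_j$ modulo its loops is the three-concurrent-lines configuration and cites \cite[Example~3.6]{clarke2021matroid}.
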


From Equation~\eqref{equ papus 3} and applying Lemma~\ref{hl1}, we obtain:
\begin{equation}\label{equ papus 4}
V_{\mathcal{C}(M_{\text{Pappus}})}=V_{M_{\text{Pappus}}}\cup V_{U_{2,9}}\bigcup_{i=1}^{9}V_{F_{i}}\bigcup_{j=1}^{3}V_{I_{j}}\bigcup_{k=1}^{9}V_{D_{k}}.
\end{equation}

All matroids in \eqref{equ papus 4}, except for $M_{\text{Pappus}}$, are solvable, which implies  that their corresponding varieties are irreducible.
Furthermore, it is straightforward to verify that this decomposition is non-redundant. Therefore, we conclude that this is the irreducible decomposition of $V_{\mathcal{C}(M_{\text{Pappus}})}$. 

\subsection{Second configuration $9_{3}$} 
Consider the {\bf second $9_{3}$ configuration} $\mathcal{K}_{9}$ depicted in Figure~\ref{new figure} (Right). Using Algorithm~\ref{algo m}, we determine the minimal matroids of $\mathcal{K}_{9}$ as follows; see Figure~\ref{min K}, from left to right:

\begin{itemize}
\item[{\rm (i)}] The uniform matroid $U_{2,9}$. 
\item[{\rm (ii)}] The matroids $\mathcal{K}_{9}(i)$ for each $i\in [9]$.
\item[{\rm (iii)}] A matroid consisting of a line with five points and a free point $x$ outside of the line. Three of the five points are double points, where pairs that share a line with $x$ coincide. 
\item[{\rm (iv)}] The matroid obtained by identifying the points $1,3,4,5,9$, along with all the matroids derived from it by applying an automorphism of $\mathcal{K}_{9}$. 
\item[{\rm (v)}] The matroid obtained by identifying $3$ with $6$ and also  identifying $2,4,5,8$, along with all the matroids derived from it by applying an automorphism of $\mathcal{K}_{9}$. 
\item[{\rm (vi)}] The matroid obtained by identifying $1,4,9$ and $6,8$, as well as the matroids obtained from it by applying an automorphism of $\mathcal{K}_{9}$. 
\item[{\rm (vii)}] The matroid where $1=2=3$ and $\{1,7,8,9\}$ are dependent, and the corresponding matroids under automorphisms of $\mathcal{K}_{9}$. 
\end{itemize}

\begin{figure}[H]
    \centering
    \includegraphics[width=1\textwidth, trim=0 0 0 0, clip]{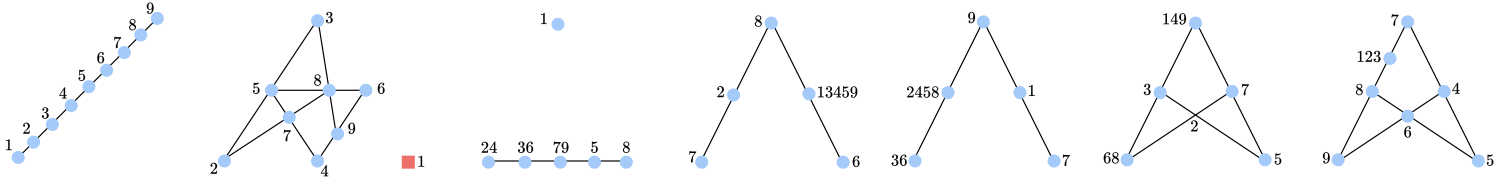}
    \caption{Minimal matroids of the second $9_{3}$ configuration $\mathcal{K}_{9}$ (up to isomorphism)}
    \label{min K}
\end{figure}

Let $A_{i},B_{i},C_{i},D_{i}$ for $i\in [9]$ represent the matroids of types (iii),(iv),(v), and (vi), respectively. Similarly, define $E_{i}$ for $i\in [3]$ as the matroids of type (vii). From this, we obtain the decomposition:

\begin{equation}\label{equ k9}
V_{\mathcal{C}(\mathcal{K}_{9})}=V_{\mathcal{K}_{9}}\cup V_{U_{2,9}}\bigcup_{i=1}^{9}V_{\mathcal{K}_{9}(i)}\bigcup_{i=1}^{9}V_{\mathcal{C}(A_{i})}\bigcup_{i=1}^{9}V_{\mathcal{C}(B_{i})}\bigcup_{i=1}^{9}V_{\mathcal{C}(C_{i})}\bigcup_{i=1}^{9}V_{\mathcal{C}(D_{i})}
\bigcup_{i=1}^{3}V_{\mathcal{C}(E_{i})}.
\end{equation}

We will also use the following lemma: 

\begin{lemma}\label{l1}
The following statements hold:
\begin{itemize}
\item[{\rm (i)}] For $i\in [9]$, $V_{\mathcal{C}(B_{i})}=V_{B_{i}}\subset V_{\mathcal{K}_{9}}$.
\item[{\rm (ii)}] For $i\in [9]$, $V_{\mathcal{C}(C_{i})} =V_{C_{i}}\subset V_{\mathcal{K}_{9}}$. 
\item[{\rm (iii)}] For $i\in [9]$, $V_{\mathcal{C}(D_{i})}\subset V_{\mathcal{K}_{9}}\cup V_{U_{2,9}}$.
\item[{\rm (iv)}] For $i\in [3]$, $V_{\mathcal{C}(E_{i})}\subset V_{\mathcal{K}_{9}}\cup V_{U_{2,9}}$.
\end{itemize}
\end{lemma}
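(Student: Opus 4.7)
The plan is to prove the four parts uniformly, following the pattern established in Lemmas~\ref{deco N}, \ref{dl}, and \ref{hl1}: first replace each circuit variety by an appropriate union of matroid varieties using Theorem~\ref{nil coincide}, and then establish the inclusion into $V_{\mathcal{K}_{9}}$ through an explicit deformation of a realization.

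For parts (i) and (ii), I would first check that $B_{i}$ and $C_{i}$, regarded as point-line configurations after deleting loops and identifying double points, are nilpotent and have no point of degree greater than two. This is a direct inspection of Figure~\ref{min K}: collapsing five points of $\mathcal{K}_{9}$ onto one (for $B_{i}$), or identifying a quadruple together with an additional pair (for $C_{i}$), leaves only a handful of surviving lines through the uncollapsed points, and the descending chain $N = N_{0} \supseteq N_{1} \supseteq \cdots$ of restrictions to points of degree at least two terminates at the empty matroid after one or two steps. Theorem~\ref{nil coincide}(i) then yields $V_{\mathcal{C}(B_{i})} = V_{B_{i}}$ and $V_{\mathcal{C}(C_{i})} = V_{C_{i}}$. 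For parts (iii) and (iv), I would verify instead that every proper submatroid of $D_{i}$ and of $E_{i}$ is nilpotent with no point of degree greater than two; Theorem~\ref{nil coincide}(ii) then yields $V_{\mathcal{C}(D_{i})} \subset V_{D_{i}} \cup V_{U_{2,9}}$ and $V_{\mathcal{C}(E_{i})} \subset V_{E_{i}} \cup V_{U_{2,9}}$.

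It remains in all four cases to show $V_{N} \subset V_{\mathcal{K}_{9}}$ for $N \in \{B_{i}, C_{i}, D_{i}, E_{i}\}$. By the $\text{Aut}(\mathcal{K}_{9})$-symmetry of each family, it is enough to fix one canonical representative in each case. I would then start from an explicit realization $\gamma \in \Gamma_{\mathcal{K}_{9}}$ and construct a one-parameter family $\gamma(t) \in \Gamma_{\mathcal{K}_{9}}$ whose $t = 0$ limit is a prescribed realization of $N$, with the identifications of $N$ appearing in the limit while all remaining incidences of $\mathcal{K}_{9}$ are preserved for $t \neq 0$. This yields $\Gamma_{N} \subset \overline{\Gamma_{\mathcal{K}_{9}}} = V_{\mathcal{K}_{9}}$, and combined with the first step completes each of (i)--(iv).

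The main obstacle will be the deformation step for parts (iii) and (iv). The matroids $D_{i}$ and $E_{i}$ impose several simultaneous identifications (for instance, merging a triple with an additional pair in $D_{i}$, or merging a triple together with a four-point dependency in $E_{i}$), and the rigid incidence structure of $\mathcal{K}_{9}$ does not admit arbitrary perturbations. The construction therefore has to be carried out on a carefully chosen affine chart of $\Gamma_{\mathcal{K}_{9}}$ adapted to the orbit of $N$ under $\text{Aut}(\mathcal{K}_{9})$; working with an explicit projective realization of the second $9_{3}$ configuration and moving the prescribed points along lines through a common auxiliary point should keep the bookkeeping tractable and ensure that no additional dependencies are forced in the limit.
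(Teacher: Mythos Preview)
Your plan is correct and follows essentially the same route as the paper's proof: reduce each circuit variety to a matroid variety via Theorem~\ref{nil coincide}, use the $\text{Aut}(\mathcal{K}_{9})$-symmetry to fix a single representative, and then exhibit each realization of $N$ as a limit of realizations of $\mathcal{K}_{9}$. The one concrete tool the paper invokes that you only gesture at is Theorem~\ref{poli}: rather than an ad hoc deformation, the paper first writes down the two-parameter moduli description of $\mathcal{M}(\mathcal{K}_{9})$ (a $3\times 9$ matrix in parameters $x,y$ subject to one determinantal relation), normalizes an arbitrary $\gamma\in\Gamma_{N}$ into a fixed projective form, and then produces the limiting family by a specific substitution such as $y=\epsilon^{-1}$ or $x=\epsilon^{-1},\,y=\epsilon t$ with $\epsilon\to 0$. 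This makes the ``main obstacle'' you flag for (iii) and (iv) routine rather than delicate, so when you carry out the details you should reach for that parametrization rather than moving points by hand.
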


Note that the matroids $A_{j}$ are nilpotent for $j\in [9]$, so the matroid and circuit varieties coincide. By applying Lemma~\ref{l1}, we obtain the following:

\begin{equation}\label{equ k9 2}
V_{\mathcal{C}(\mathcal{K}_{9})}=V_{\mathcal{K}_{9}}\cup V_{U_{2,9}}\bigcup_{i=1}^{9}V_{\mathcal{C}(\mathcal{K}_{9}(i))}\bigcup_{j=1}^{9}V_{A_{j}}.
\end{equation}

To determine the irreducible decomposition of $V_{\mathcal{C}(\mathcal{K}_{9})}$ we need to first find the irreducible decomposition of $V_{\mathcal{C}(\mathcal{K}_{9}(i))}$. Since the cases for $i\in [9]$ are analogous, we assume the matroid is $\mathcal{K}_{9}(9)$, and establish the following lemma:

\begin{lemma}\label{l5}
Let $N_{1}$ and $N_{2}$ be the matroids obtained from $\mathcal{K}_{9}(9)$ by setting the points $2$ and $5$ as loops, respectively.
Then, we have that \[V_{\mathcal{C}(\mathcal{K}_{9}(9))}=V_{\mathcal{K}_{9}(9)}\cup V_{N_{1}}\cup V_{N_{2}}\cup V_{U_{2,9}}.\]
\end{lemma}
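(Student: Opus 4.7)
The plan is to apply the decomposition strategy of Section~\ref{strat} to the matroid $\mathcal{K}_{9}(9)$. The inclusion $\supseteq$ is immediate: the matroid $\mathcal{K}_9(9)$ trivially sits below itself in the dependency order, while $N_1$ and $N_2$ are obtained from $\mathcal{K}_9(9)$ by declaring additional points to be loops, so each of $N_1, N_2$ lies above $\mathcal{K}_9(9)$; finally, any realization of $U_{2,9}$ places all nine vectors on a common line, so every triple is linearly dependent and in particular all circuits of $\mathcal{K}_9(9)$ are satisfied. Hence $V_{\mathcal{K}_9(9)} \cup V_{N_1} \cup V_{N_2} \cup V_{U_{2,9}} \subseteq V_{\mathcal{C}(\mathcal{K}_9(9))}$.

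For the inclusion $\subseteq$, the first step is to run Algorithm~\ref{algo m} on $\mathcal{K}_9(9)$ to enumerate $\min(\mathcal{K}_9(9))$. By Proposition~\ref{deco circ},
\[
V_{\mathcal{C}(\mathcal{K}_9(9))} = V_{\mathcal{K}_9(9)} \cup \bigcup_{N \in \min(\mathcal{K}_9(9))} V_{\mathcal{C}(N)},
\]
and it suffices to show that each $V_{\mathcal{C}(N)}$ on the right-hand side is contained in $V_{\mathcal{K}_9(9)} \cup V_{N_1} \cup V_{N_2} \cup V_{U_{2,9}}$. I expect $\min(\mathcal{K}_9(9))$ to split into three families: (a) matroids obtained from $\mathcal{K}_9(9)$ by turning an additional point $i\in[8]$ into a loop; (b) matroids obtained by identifying two or more points of $\mathcal{K}_9$ (other than $9$); (c) matroids obtained by adjoining a new three-point circuit. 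Family (a) consists, up to isomorphism, of either $N_1$ or $N_2$ (after applying an automorphism of $\mathcal{K}_9$ fixing the point $9$), since $\mathcal{K}_9$ has exactly two orbits of points under $\operatorname{Aut}(\mathcal{K}_9)_9$, the stabilizer of point $9$.

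For families (b) and (c), the strategy is to iterate the decomposition: each such $N$ is itself a point-line configuration (after removing the loop at $9$ and identifying double points), to which I apply Theorem~\ref{nil coincide}\,(i)--(ii) whenever $N$ is nilpotent with no point of degree greater than two. This replaces $V_{\mathcal{C}(N)}$ by $V_N \cup V_{U_{2,9}}$. It then remains to exhibit realization-space inclusions $\Gamma_N \subseteq V_{\mathcal{K}_9(9)}$ by constructing, for every realization of $N$, an extension to a realization of $\mathcal{K}_9(9)$; concretely, for an identification $i = j$ in $N$ one places the remaining points of $\mathcal{K}_9$ generically subject to the incidence constraints, which is possible because collapsing a pair of points in $\mathcal{K}_9$ strictly increases the dimension of the realization space.

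The main obstacle will be the combinatorial bookkeeping for families (b) and (c): one must classify the merged configurations under $\operatorname{Aut}(\mathcal{K}_9)$ and for each representative construct the extension explicitly, or else verify that the configuration forces all points onto a common line (landing in $V_{U_{2,9}}$). Finally, any redundancies in the resulting union are removed using the techniques from Section~\ref{appen}, yielding the four-term decomposition in the statement.
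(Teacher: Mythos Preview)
Your overall outline matches the paper's approach (apply Proposition~\ref{deco circ}, enumerate $\min(\mathcal{K}_9(9))$ via Algorithm~\ref{algo m}, then absorb the redundant pieces using the perturbation techniques of Section~\ref{appen}). However, your treatment of family~(a) contains a genuine gap.

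First, the orbit claim is incorrect as stated: in $\mathcal{K}_9$, the points $2$ and $5$ are precisely the two points \emph{not} lying on a common line with $9$. If the stabilizer of $9$ has two orbits on $[8]$, they are ``colinear with~$9$'' versus ``non-colinear with~$9$'', and then $2$ and $5$ lie in the \emph{same} orbit---so they cannot serve as representatives of two distinct orbits. More importantly, even a correct orbit statement would not do what you need. An automorphism $\sigma$ of $\mathcal{K}_9$ fixing $9$ sends $V_{\mathcal{K}_9(9)(i)}$ to $V_{\mathcal{K}_9(9)(\sigma(i))}$ by permuting the columns of the ambient matrix space; this is an equality of varieties up to relabeling, not a containment in $V_{N_1}\cup V_{N_2}$. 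The six matroids $\mathcal{K}_9(9)(i)$ with $i$ colinear to $9$ are genuinely different subvarieties of $\CC^{3\times 9}$, and for each one you must prove $V_{\mathcal{K}_9(9)(i)}\subset V_{\mathcal{K}_9(9)}$ individually (the analogue of Lemma~\ref{hl1}(ii) for the Pappus configuration). The paper does this via the explicit parametrization of $\mathcal{M}(\mathcal{K}_9)$ given in Equation~\eqref{matris}: one writes down a normal form for $\gamma\in\Gamma_{\mathcal{K}_9(9)(i)}$ and exhibits a one-parameter family in $\Gamma_{\mathcal{K}_9(9)}$ limiting to it.

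A smaller issue: for families~(b) and~(c) you speak of ``constructing, for every realization of $N$, an extension to a realization of $\mathcal{K}_9(9)$''. A realization of $N>\mathcal{K}_9(9)$ already assigns a vector to every point of $[9]$; it has \emph{extra} dependencies, not missing points, so nothing is being extended. The correct argument is that $\gamma\in\Gamma_N$ lies in the Zariski closure $V_{\mathcal{K}_9(9)}$, i.e.\ can be infinitesimally perturbed into $\Gamma_{\mathcal{K}_9(9)}$. The dimension heuristic you invoke does not establish this; again, the paper's method is to use the parametrization from Theorem~\ref{poli} and exhibit the perturbation explicitly, exactly as in the proof of Lemma~\ref{l1}.
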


We denote by $G_{i}$, with $i\in [9]$, the matroids that are obtained from $\mathcal{K}_{9}$ by setting two points that are not on the same line as loops. Using Lemma~\ref{l5}, we obtain that:

\begin{equation}\label{equ k9 3}
V_{\mathcal{C}(\mathcal{K}_{9})}=V_{\mathcal{K}_{9}}\cup V_{U_{2,9}}\bigcup_{i=1}^{9}V_{\mathcal{K}_{9}(i)}\bigcup_{j=1}^{9}V_{A_{j}}\bigcup_{l=1}^{9}V_{G_{l}}.
\end{equation}

We know that $V_{\mathcal{K}_{9}}$ is irreducible, as indicated in \textup{\cite[Table~4.1]{corey2023singular}}. Therefore, since all matroids in this decomposition, except for $\mathcal{K}_{9}$, are solvable, this implies  that the corresponding varieties are irreducible.
Furthermore, it is straightforward to verify that this decomposition is non-redundant. Therefore, we conclude that this is the irreducible decomposition of $V_{\mathcal{C}(\mathcal{K}_{9})}$.

\section{Connections to $\mathcal{X}$-matroids 
}\label{quest}

We conclude the paper by exploring the connection between our work and $\mathcal{X}$-matroids, as introduced in \cite{jackson2024maximal}. These matroids are defined on $[d]$ such that every subset in a specified family $\mathcal{X}$ forms a circuit. Additionally, we propose a related conjecture.  
The study of $\mathcal{X}$-matroids intersects with several areas, including combinatorial rigidity and low-rank matrix completion. Notably, it is closely linked to the theory of maximal abstract rigidity matroids \cite{Somematroids,graver91rigidity,combrig} and maximal $H$-matroids \cite{maximum}.  

\medskip

The connection between our work and $\mathcal{X}$-matroids is highlighted in the following remark.

\begin{remark}
Let $\mathcal{X} \subset \textstyle \binom{[d]}{3}$. By applying a slightly modified version of Algorithm~\ref{algo m} to the initial hypergraph $\mathcal{X}$, with the condition that no two points within the same element of $\mathcal{X}$ are identified during the process, we obtain the set of all minimal $\mathcal{X}$-matroids of rank at most three.
\end{remark}

This leads to the following natural question.

\begin{question}
Let $\mathcal{X}\subset \textstyle \binom{[d]}{3}$. Find an algorithm to identify the set of minimal $\mathcal{X}$-matroids.
\end{question}

In \cite{jackson2024maximal}, the authors identified a sufficient condition under which the set of minimal $\mathcal{X}$-matroids contains a unique minimal element, and they conjectured that this condition is also necessary.
To state this precisely, we need the following definition from \cite{jackson2024maximal}.

\begin{definition}\normalfont
A {\em proper $\mathcal{X}$-sequence} is a sequence $\mathcal{S}=(X_{1},\ldots,X_{k})$ of sets in $\mathcal{X}$ such that $X_{i}\not \subset \cup_{j=1}^{i-1}X_{j}$ for each $i=2,\ldots,k$. For $F\subset [d]$, we define $\text{val}(F,\mathcal{S})=\lvert F\cup_{i=1}^{k} X_{i} \rvert -k$. The function $\text{val}_{\mathcal{X}}:2^{[d]}\rightarrow \mathbb{Z}$ is then given by
\[\text{val}_{\mathcal{X}}(F)=\min \{\text{val}(F,\mathcal{S}): \text{$\mathcal{S}$ is a proper $\mathcal{X}$-sequence}\}.\]
\end{definition}

In \cite{jackson2024maximal}, Jackson and Tanigawa proved the following lemma and conjectured its reverse.
\begin{lemma}\textup{\cite[Lemma~1.2]{jackson2024maximal}}\label{unique 23} 
If $\textup{val}_{\mathcal{X}}$ is submodular and the set of $\mathcal{X}$-matroids is nonempty, then $\textup{val}_{\mathcal{X}}$ serves as the rank function of the unique minimal $\mathcal{X}$-matroid.
\end{lemma}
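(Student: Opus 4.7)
My plan is to verify that $\textup{val}_{\mathcal{X}}$ satisfies the axioms of a matroid rank function, to check that each $X\in\mathcal{X}$ is then a circuit of the resulting matroid, and to show that this matroid is the minimum in the dependency order among all $\mathcal{X}$-matroids. The central technical input is the pointwise bound $r_{N}(F)\leq \textup{val}_{\mathcal{X}}(F)$ for every $\mathcal{X}$-matroid $N$ and every $F\subseteq [d]$, which will do double duty: it proves minimality and also supplies the nonnegativity $\textup{val}_{\mathcal{X}}(F)\geq 0$ via the hypothesis that some $\mathcal{X}$-matroid exists.

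To establish this inequality, I would work with the nullity $n_{N}(A)=|A|-r_{N}(A)$, which is monotone and supermodular (the latter because $|\cdot|$ is modular and $r_{N}$ is submodular). Fix a proper $\mathcal{X}$-sequence $\mathcal{S}=(X_{1},\ldots,X_{k})$ and set $G_{i}=X_{1}\cup\cdots\cup X_{i}$, with $G_{0}=\emptyset$. Since each $X_{i}$ is a circuit of $N$ and, by the proper $\mathcal{X}$-sequence condition, $X_{i}\not\subseteq G_{i-1}$, the intersection $X_{i}\cap G_{i-1}$ is a proper subset of the circuit $X_{i}$, hence independent, so $n_{N}(X_{i}\cap G_{i-1})=0$ while $n_{N}(X_{i})=1$. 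Supermodularity then yields $n_{N}(G_{i})\geq n_{N}(G_{i-1})+1$, and induction gives $n_{N}(G_{k})\geq k$. Monotonicity of nullity implies $n_{N}(F\cup G_{k})\geq k$, i.e.\ $r_{N}(F\cup G_{k})\leq |F\cup G_{k}|-k=\textup{val}(F,\mathcal{S})$. Combining with $r_{N}(F)\leq r_{N}(F\cup G_{k})$ and taking the infimum over $\mathcal{S}$ proves $r_{N}(F)\leq \textup{val}_{\mathcal{X}}(F)$.

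With this in hand the remaining pieces assemble quickly. Submodularity of $\textup{val}_{\mathcal{X}}$ is assumed; monotonicity and $\textup{val}_{\mathcal{X}}(F)\leq |F|$ follow from the definition (the empty sequence gives $\textup{val}(F,\emptyset)=|F|$, and $|F\cup G_{k}|-k$ is monotone in $F$); nonnegativity follows from the displayed inequality applied to any fixed $\mathcal{X}$-matroid, showing that $\textup{val}_{\mathcal{X}}$ is a matroid rank function. To see each $X\in\mathcal{X}$ is a circuit in the associated matroid, the sequence $(X)$ shows $\textup{val}_{\mathcal{X}}(X)\leq |X|-1$, while for any proper $Y\subsetneq X$ the chain $|Y|=r_{N}(Y)\leq\textup{val}_{\mathcal{X}}(Y)\leq |Y|$ forces equality, so every proper subset of $X$ is independent. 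Finally, $r_{N}\leq \textup{val}_{\mathcal{X}}$ for all $\mathcal{X}$-matroids $N$ is precisely the statement that the matroid determined by $\textup{val}_{\mathcal{X}}$ is the minimum in the dependency order, and is in particular the unique minimal $\mathcal{X}$-matroid. The subtlety I expect is bookkeeping: nonnegativity of $\textup{val}_{\mathcal{X}}$, required to conclude it is a rank function, must be obtained from the nullity estimate \emph{before} invoking any matroid structure on $\textup{val}_{\mathcal{X}}$ itself, so the order of the steps matters; the supermodularity-of-nullity computation must be justified purely on the side of $N$.
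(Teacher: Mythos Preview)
The paper does not prove this lemma: it is quoted verbatim as \cite[Lemma~1.2]{jackson2024maximal} and used as a black box, so there is no in-paper proof to compare against. Your argument is a correct, self-contained proof of the cited result. The central inequality $r_{N}(F)\le \textup{val}_{\mathcal{X}}(F)$ that you establish via supermodularity of nullity is exactly \cite[Lemma~1.1]{jackson2024maximal}, which the present paper also invokes later (in the proof of Lemma~\ref{bound}); your derivation of it is clean and your subsequent assembly---checking that nonnegativity, monotonicity, the bound $\textup{val}_{\mathcal{X}}(F)\le |F|$, and the assumed submodularity yield a matroid rank function, that each $X\in\mathcal{X}$ becomes a circuit, and that the pointwise rank inequality gives minimality in the dependency order---is sound. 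One small point you left implicit but which does follow: the unit-increase condition $\textup{val}_{\mathcal{X}}(A\cup\{x\})\le \textup{val}_{\mathcal{X}}(A)+1$ is a consequence of submodularity together with $\textup{val}_{\mathcal{X}}(\{x\})\le 1$ and $\textup{val}_{\mathcal{X}}(\emptyset)=0$, so your list of verified axioms is indeed sufficient.
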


In the following example we show that the converse of this lemma does not hold. 

\begin{example}\label{contra}
Let $\mathcal{X}=\{\{1,4,5\},\{2,4,5\},\{1,6,7\},\{2,6,7\},\{1,2,3\}\}$ on the ground set $[7]$. In this case, there is a unique minimal $\mathcal{X}$-matroid, which is the uniform matroid $U_{2,7}$. However, we have $\textup{val}_{\mathcal{X}}([7])=3$, which implies that $\textup{val}_{\mathcal{X}}$ is not the rank of the unique minimal $\mathcal{X}$-matroid.
\end{example}

We now explain the conceptual idea behind this example. Their conjecture 
builds on \textup{\cite[Lemma~1.1]{jackson2024maximal}}, which provides an upper bound for the rank of any subset of the ground set in an $\mathcal{X}$-matroid by utilizing the submodularity of the rank function. In this lemma, submodularity is applied to recursively bound the rank of subsets that are the union of elements of $\mathcal{X}$. However, this application is restricted to situations where one of the two sets considered for the submodularity is an element of $\mathcal{X}$. It is possible, though, that this property could be extended to yield a tighter upper bound for the rank. This motivates the following definition.

\begin{definition}\label{defi}
We define a sequence of functions $\textup{val}^{n}_{\mathcal{X}}: 2^{[d]} \to \mathbb{Z}$ recursively, starting with $\textup{val}^{1}_{\mathcal{X}} = \textup{val}_{\mathcal{X}}$. For $n \geq 1$, each function $\textup{val}^{n+1}_{\mathcal{X}}$ is defined as follows:

\medskip
\begin{itemize}
\item[{\rm (i)}] If there exist subsets $A, B \subset [d]$ with $A \cap B \subset x$ for some $x \in \mathcal{X}$ such that
\begin{equation}\label{condition}
\textup{val}^{n}_{\mathcal{X}}(A\cup B)>\textup{val}^{n}_{\mathcal{X}}(A)+\textup{val}^{n}_{\mathcal{X}}(B)-\min \{\size{A\cap B},\size{x}-1\}.
\end{equation}
then 
define $\textup{val}^{n+1}_{\mathcal{X}}$ for any $F \subset [d]$ as:
\[
\textup{val}^{n+1}_{\mathcal{X}}(F) =
\begin{cases}
 \textup{val}^{n}_{\mathcal{X}}(F), & \text{if } F \neq A \cup B, \\
 \textup{val}^{n}_{\mathcal{X}}(A) + \textup{val}^{n}_{\mathcal{X}}(B) - \min \{\size{A \cap B}, \size{x} - 1\}, & \text{if } F = A \cup B.
\end{cases}
\]

\item[{\rm (ii)}] If condition (i) does not hold, but there exist subsets $A \subsetneq B \subset [d]$ such that
\begin{equation}\label{condition 2}
\textup{val}^{n}_{\mathcal{X}}(A)>\textup{val}^{n}_{\mathcal{X}}(B).
\end{equation}
then 
define $\textup{val}^{n+1}_{\mathcal{X}}$ for any $F \subset [d]$ as:
\[
\textup{val}^{n+1}_{\mathcal{X}}(F) =
\begin{cases}
 \textup{val}^{n}_{\mathcal{X}}(F), & \text{if } F \neq A, \\
 \textup{val}^{n}_{\mathcal{X}}(B), & \text{if } F = A.
\end{cases}
\]

\medskip
\item[{\rm (iii)}] If neither condition (i) nor (ii) holds, but there exist subsets $B \subsetneq A \subset [d]$ such that
\begin{equation}\label{condition 3}\textup{val}^{n}_{\mathcal{X}}(A)>\textup{val}^{n}_{\mathcal{X}}(B)+\size{A\setminus B}.
\end{equation}
then 
define $\textup{val}^{n+1}_{\mathcal{X}}$ for any $F \subset [d]$ as:
\[
\textup{val}^{n+1}_{\mathcal{X}}(F) =
\begin{cases}
 \textup{val}^{n}_{\mathcal{X}}(F), & \text{if } F \neq A, \\
 \textup{val}^{n}_{\mathcal{X}}(B) + \size{A \setminus B}, & \text{if } F = A.
\end{cases}
\]

\medskip
\item[{\rm (iv)}] If none of the above conditions hold, set $ \textup{val}^{n+1}_{\mathcal{X}} = \textup{val}^{n}_{\mathcal{X}}$.
\end{itemize}
There exists some $n$ such that $ \textup{val}^{n+1}_{\mathcal{X}} = \textup{val}^{n}_{\mathcal{X}}$. We then define $v_{\mathcal{X}}=\textup{val}^{n}_{\mathcal{X}}$.
\end{definition}

We now prove that the function $v_{\mathcal{X}}$ from Definition~\ref{defi} is well defined.

\begin{lemma}
The function $v_{\mathcal{X}}$ is independent of the choice of subsets at each step.
\end{lemma}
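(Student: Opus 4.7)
The plan is to characterize $v_{\mathcal{X}}$ intrinsically, as the pointwise maximum of a family of admissible functions, so that its value does not depend on which reduction is chosen at each step. Define $\mathcal{F}$ to be the set of functions $g:2^{[d]}\to\mathbb{Z}$ satisfying: (a) $g\leq \textup{val}_{\mathcal{X}}$ pointwise; (b) $g(A\cup B)\leq g(A)+g(B)-\min\{|A\cap B|,|x|-1\}$ whenever $A\cap B\subset x\in\mathcal{X}$; (c) $g(A)\leq g(B)$ for all $A\subsetneq B\subset[d]$; and (d) $g(A)\leq g(B)+|A\setminus B|$ for all $B\subsetneq A\subset[d]$. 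These four conditions are precisely the negations of the triggering conditions in (i), (ii), (iii) of Definition~\ref{defi}.

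First I would show that, for any execution of the recursion, the output $v_{\mathcal{X}}$ lies in $\mathcal{F}$. Condition (a) holds because each reduction only decreases values, while (b), (c), (d) hold at termination precisely because none of the conditions (i), (ii), (iii) triggers a further step.

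The heart of the argument is a monotone invariant: for every $g\in\mathcal{F}$ and every intermediate function $\textup{val}^n_{\mathcal{X}}$ along a run, one has $g\leq\textup{val}^n_{\mathcal{X}}$ pointwise. This is proved by induction on $n$. The base case $n=1$ is condition (a). For the inductive step, the value at exactly one subset $F$ is modified at step $n+1$; in the case of a type-(i) reduction with $F=A\cup B$, combining the inductive hypothesis on $A$ and $B$ with condition (b) for $g$ gives
\[
\textup{val}^{n+1}_{\mathcal{X}}(A\cup B)=\textup{val}^n_{\mathcal{X}}(A)+\textup{val}^n_{\mathcal{X}}(B)-\min\{|A\cap B|,|x|-1\}\geq g(A)+g(B)-\min\{|A\cap B|,|x|-1\}\geq g(A\cup B).
\]
The cases of a type-(ii) or type-(iii) reduction are entirely analogous, invoking conditions (c) and (d) respectively.

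With these two observations, well-definedness follows immediately: given any two runs of the process producing outputs $v_1$ and $v_2$, both lie in $\mathcal{F}$ by the first step, so applying the invariant with $g=v_1$ to a run producing $v_2$ yields $v_1\leq v_2$, and the symmetric argument gives $v_2\leq v_1$, whence $v_1=v_2$. This also identifies $v_{\mathcal{X}}$ with $\max_{g\in\mathcal{F}}g$. The main obstacle I anticipate is the bookkeeping needed to confirm that (b), (c), (d) are exactly the negations of the three triggering conditions, and that the single-subset update at each step really preserves every other value unchanged; once this matching is pinned down, the induction proceeds formally. Termination of the process is built into Definition~\ref{defi} and therefore need not be re-verified here.
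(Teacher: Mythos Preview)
Your proposal is correct and follows essentially the same strategy as the paper's proof: show by induction on $n$ that any terminal output $g$ of the recursion satisfies $g\leq\textup{val}^n_{\mathcal{X}}$ pointwise, using that $g$ fails all three triggering conditions, and conclude by symmetry. The only difference is packaging: you isolate the class $\mathcal{F}$ of ``stable'' functions and prove the invariant for all $g\in\mathcal{F}$ at once, whereas the paper fixes a second run with output $v_{\mathcal{X}}'$ and runs the same induction specifically for $g=v_{\mathcal{X}}'$. Your extra observation that $v_{\mathcal{X}}=\max_{g\in\mathcal{F}}g$ is a pleasant byproduct but not needed for the lemma.
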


\begin{proof}
Let $v_{\mathcal{X}}$ be the function constructed in Definition~\ref{defi}, derived from the sequence $\textup{val}_{\mathcal{X}}^{n}$.
Suppose an alternative choice of subsets leads to another function $v_{\mathcal{X}}\rq$. We aim to show $v_{\mathcal{X}}\rq=v_{\mathcal{X}}$. To establish this equality, it suffices to prove $v_{\mathcal{X}}\rq\leq v_{\mathcal{X}}$, since the result follows by symmetry. 

We will show by induction that $v_{\mathcal{X}}\rq \leq \textup{val}_{\mathcal{X}}^{n}$. For the base case $n=1$, we have $\textup{val}_{\mathcal{X}}^{1} = \textup{val}_{\mathcal{X}}$, so the inequality holds trivially. For the inductive step, assume $v_{\mathcal{X}}\rq \leq \textup{val}_{\mathcal{X}}^{n}$ and analyze each case separately:

\medskip
{\bf Case~1.} Suppose there exist subsets $A,B\subset [d]$ with $A\cap B\subset x$ for some $x\in \mathcal{X}$ as in Equation~\eqref{condition}. By induction, we have:
\begin{equation*}
\begin{aligned}
\textup{val}_{\mathcal{X}}^{n+1}(A\cup B)&=\textup{val}_{\mathcal{X}}^{n}(A)+\textup{val}_{\mathcal{X}}^{n}(B)-\min \{\size{A\cap B},\size{x}-1\}\\
&\geq v_{\mathcal{X}}\rq(A)+v_{\mathcal{X}}\rq(B)-\min\{\size{A\cap B},\size{x}-1\}\\
&=v_{\mathcal{X}}\rq(A)+v_{\mathcal{X}}\rq(B)-v_{\mathcal{X}}\rq(A\cap B)\\
&\geq v_{\mathcal{X}}\rq(A\cup B),
\end{aligned}
\end{equation*}
where the last equality holds since $x\in \mathcal{X}$ and condition (i) does not hold for $v_{\mathcal{X}}\rq$. Thus, $v_{\mathcal{X}}\rq\leq \textup{val}_{\mathcal{X}}^{n+1}$.

\medskip
{\bf Case~2.} Suppose there exist subsets $A\subsetneq B\subset [d]$ as in Equation~\eqref{condition 2} and the first case does not apply. By induction, we have
\[\textup{val}_{\mathcal{X}}^{n+1}(A)=\textup{val}_{\mathcal{X}}^{n}(B)\geq v_{\mathcal{X}}\rq(B)\geq v_{\mathcal{X}}\rq(A),\]
where the last equality holds since condition (ii) does not hold for $v_{\mathcal{X}}\rq$. Thus, $\textup{val}_{\mathcal{X}}^{n+1}\geq v_{\mathcal{X}}\rq$.

\medskip
{\bf Case~3.} Suppose there exist subsets $B\subsetneq A\subset [d]$ as in Equation~\eqref{condition 3}, with neither Case~$1$ nor Case~$2$ occurring. By induction, we have
\[\textup{val}_{\mathcal{X}}^{n+1}(A)=\textup{val}_{\mathcal{X}}^{n}(B)+\size{A\setminus B}\geq v_{\mathcal{X}}\rq(B)+\size{A\setminus B}\geq v_{\mathcal{X}}\rq(A),\]
where the last equality holds since condition (iii) does not hold for $v_{\mathcal{X}}\rq$. Thus, $\textup{val}_{\mathcal{X}}^{n+1}\geq v_{\mathcal{X}}\rq$.

\medskip
{\bf Case~4.} If none of the previous cases apply, then $\textup{val}_{\mathcal{X}}^{n+1}=\textup{val}_{\mathcal{X}}^{n}$, and the result follows.
\end{proof}

The intuition behind this definition is that each $\textup{val}_{\mathcal{X}}^{n}$ serves as an upper bound for the rank of any $\mathcal{X}$-matroid. Each successive function refines this bound, becoming progressively sharper by leveraging the submodularity and monotonicity of the matroid rank function.

\begin{remark}
Observe that the second and the third conditions from the previous definition guarantee that, for any subset $A\subset [d]$ and any element $y\in [d]$,
\[\label{ineq}v_{\mathcal{X}}(A)\leq v_{\mathcal{X}}(A\cup \{y\})\leq v_{\mathcal{X}}(A)+1.\]
Therefore, if $v_{\mathcal{X}}$ is non-negative and submodular, it defines the rank function of a matroid.
\end{remark}

We now show that $v_{\mathcal{X}}$ provides an upper bound for the rank function of any $\mathcal{X}$-matroid.

\begin{lemma}\label{bound}
For any $\mathcal{X}$-matroid $M$ on $[d]$ and any $F\subset [d]$, we have $\rank_{M}(F)\leq v_{\mathcal{X}}(F)$. 
\end{lemma}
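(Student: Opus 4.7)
The plan is to prove, by induction on $n$, that $\rank_{M}(F) \leq \textup{val}_{\mathcal{X}}^{n}(F)$ for every $F \subset [d]$. Since the sequence $\textup{val}_{\mathcal{X}}^{n}$ stabilizes at $v_{\mathcal{X}}$ by construction, this will immediately yield the desired inequality.

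For the base case $n=1$, I would invoke \cite[Lemma~1.1]{jackson2024maximal}, which establishes exactly $\rank_{M}(F) \leq \textup{val}_{\mathcal{X}}(F)$. The underlying idea is that, for any proper $\mathcal{X}$-sequence $\mathcal{S}=(X_{1},\ldots,X_{k})$, each $X_{i}$ is a circuit of $M$ and is not contained in the union of the previous $X_{j}$, so each adds at least one unit of rank deficit to $F \cup \bigcup X_{i}$; combined with the monotonicity of $\rank_{M}$, this yields $\rank_{M}(F) \leq |F \cup \bigcup X_{i}| - k = \textup{val}(F,\mathcal{S})$, and then minimizing over $\mathcal{S}$ gives the claim.

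For the inductive step, assuming $\rank_{M} \leq \textup{val}_{\mathcal{X}}^{n}$ pointwise, I would check each of the four cases in Definition~\ref{defi} separately. Cases (ii) and (iii) follow directly from monotonicity of $\rank_{M}$, namely $\rank_{M}(A) \leq \rank_{M}(B)$ when $A \subsetneq B$, and $\rank_{M}(A) \leq \rank_{M}(B) + |A \setminus B|$ when $B \subsetneq A$, combined with the inductive hypothesis. Case (iv) is trivial since $\textup{val}_{\mathcal{X}}^{n+1} = \textup{val}_{\mathcal{X}}^{n}$.

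The heart of the argument, and where the construction of $v_{\mathcal{X}}$ genuinely strengthens $\textup{val}_{\mathcal{X}}$, is Case (i). Here one has $A \cap B \subset x$ for some $x \in \mathcal{X}$. Starting from submodularity of $\rank_{M}$, I would write
\[
\rank_{M}(A \cup B) \leq \rank_{M}(A) + \rank_{M}(B) - \rank_{M}(A \cap B) \leq \textup{val}_{\mathcal{X}}^{n}(A) + \textup{val}_{\mathcal{X}}^{n}(B) - \rank_{M}(A \cap B),
\]
and then bound $\rank_{M}(A \cap B)$ from below by $\min\{|A \cap B|, |x|-1\}$. This lower bound follows from the fact that $x$ is a circuit of $M$: if $A \cap B \subsetneq x$ then $A \cap B$ is independent in $M$ so $\rank_{M}(A \cap B) = |A \cap B|$, while if $A \cap B = x$ then $\rank_{M}(A \cap B) = |x|-1$. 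Substituting yields exactly $\rank_{M}(A \cup B) \leq \textup{val}_{\mathcal{X}}^{n+1}(A \cup B)$, closing the induction. The main subtlety is this lower bound on $\rank_{M}(A \cap B)$, since it is precisely the circuit hypothesis $x \in \mathcal{X}$ that makes the refined submodularity-type step in Definition~\ref{defi}(i) valid for every $\mathcal{X}$-matroid.
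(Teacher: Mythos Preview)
Your proposal is correct and follows essentially the same approach as the paper: induction on $n$ with base case given by \cite[Lemma~1.1]{jackson2024maximal}, and the inductive step handled case by case, using submodularity together with the circuit property of $x$ for case~(i), monotonicity for cases~(ii)--(iii), and triviality for case~(iv). In fact, in case~(i) your lower bound $\rank_{M}(A\cap B)\geq \min\{|A\cap B|,|x|-1\}$ is an equality, exactly as the paper uses it.
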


\begin{proof}
Let $v_{\mathcal{X}}$ be constructed from the sequence $(\textup{val}_{\mathcal{X}}^{n})_{n\in \mathbb{N}}$. We proceed by induction in $n$. For the base case, when $n=1$, the result holds because $\textup{val}_{\mathcal{X}}$ serves as an upper bound for $\rank_{M}$ by \textup{\cite[Lemma~1.1]{jackson2024maximal}}. For the inductive step, assume that 
$\text{val}_{\mathcal{X}}^{n}$ is an upper bound for $\rank_{M}$. We analyze each case separately:

\medskip
{\bf Case~1.} Suppose there exist subsets $A,B\subset [d]$ with $A\cap B\subset x$ for some $x\in \mathcal{X}$ as in Equation~\eqref{condition}. By induction, we have:
\begin{equation*}
\begin{aligned}
\textup{val}_{\mathcal{X}}^{n+1}(A\cup B)&=\textup{val}_{\mathcal{X}}^{n}(A)+\textup{val}_{\mathcal{X}}^{n}(B)-\min \{\size{A\cap B},\size{x}-1\}\\
&\geq \rank_{M}(A)+\rank_{M}(B)-\min\{\size{A\cap B},\size{x}-1\}\\
&=\rank_{M}(A)+\rank_{M}(B)-\rank(A\cap B)\\
&\geq \rank_{M}(A\cup B),
\end{aligned}
\end{equation*}
where the last equality holds since $x\in \mathcal{C}(M)$. Thus, $\textup{val}_{\mathcal{X}}^{n+1}$ is an upper bound for $\rank_{M}$.

\medskip
{\bf Case~2.} Suppose there exist subsets $A\subsetneq B\subset [d]$ as in Equation~\eqref{condition 2} and the first case does not apply. By induction, we have
\[\textup{val}_{\mathcal{X}}^{n+1}(A)=\textup{val}_{\mathcal{X}}^{n}(B)\geq \rank_{M}(B)\geq \rank_{M}(A),\]
implying that $\textup{val}_{\mathcal{X}}^{n+1}$ is an upper bound for $\rank_{M}$.

\medskip
{\bf Case~3.} Suppose there exist subsets $B\subsetneq A\subset [d]$ as in Equation~\eqref{condition 3}, with neither Case~$1$ nor Case~$2$ occurring. By induction, we have
\[\textup{val}_{\mathcal{X}}^{n+1}(A)=\textup{val}_{\mathcal{X}}^{n}(B)+\size{A\setminus B}\geq \rank_{M}(B)+\size{A\setminus B}\geq \rank_{M}(A),\]
proving that $\textup{val}_{\mathcal{X}}^{n+1}$ is an upper bond for $\rank_{M}$.

\medskip
{\bf Case~4.} If none of the previous cases apply, then $\textup{val}_{\mathcal{X}}^{n+1}=\textup{val}_{\mathcal{X}}^{n}$, and the result follows. 
\end{proof}

Applying Lemma~\ref{bound} and following the argument in Lemma~\ref{unique 23}, we derive the following result.

\begin{lemma}\label{lema con}
If $v_{\mathcal{X}}$ is submodular and the set of $\mathcal{X}$-matroids is nonempty, then $v_{\mathcal{X}}$ serves as the rank function of the unique minimal $\mathcal{X}$-matroid.
\end{lemma}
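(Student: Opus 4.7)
The plan is to construct a matroid $M^{*}$ on $[d]$ whose rank function is exactly $v_{\mathcal{X}}$, and then use Lemma~\ref{bound} to show that $M^{*}$ is the unique minimal element in the family of $\mathcal{X}$-matroids under the dependency order.

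First, I would verify that $v_{\mathcal{X}}$ satisfies the three axioms of a matroid rank function. By hypothesis, $v_{\mathcal{X}}$ is submodular. The remark immediately following Definition~\ref{defi} establishes the monotonicity and unit-increment bound $v_{\mathcal{X}}(A)\leq v_{\mathcal{X}}(A\cup\{y\})\leq v_{\mathcal{X}}(A)+1$, which together give $0\leq v_{\mathcal{X}}(F)\leq |F|$ for every $F\subset [d]$, once we know $v_{\mathcal{X}}(\emptyset)=0$. The latter follows from the facts that $\textup{val}_{\mathcal{X}}(\emptyset)=0$ (using the empty proper $\mathcal{X}$-sequence), the updates in Definition~\ref{defi} never increase values, and Lemma~\ref{bound} combined with the existence of some $\mathcal{X}$-matroid $M_{0}$ gives $v_{\mathcal{X}}(\emptyset)\geq \rank_{M_{0}}(\emptyset)=0$. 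These properties together identify $v_{\mathcal{X}}$ as the rank function of a well-defined matroid $M^{*}$ on $[d]$.

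Next, I would show that $M^{*}$ is an $\mathcal{X}$-matroid, i.e.~every $x\in\mathcal{X}$ is a circuit of $M^{*}$. For the upper bound, the singleton proper $\mathcal{X}$-sequence $(x)$ yields $\textup{val}_{\mathcal{X}}(x)\leq |x|-1$, and since the updates are monotone non-increasing, $v_{\mathcal{X}}(x)\leq |x|-1$. For the lower bound on $v_{\mathcal{X}}(x\setminus\{e\})$, fix any $\mathcal{X}$-matroid $M_{0}$, which exists by hypothesis; since $x$ is a circuit in $M_{0}$, the proper subset $x\setminus\{e\}$ is independent, so $\rank_{M_{0}}(x\setminus\{e\})=|x|-1$. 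By Lemma~\ref{bound}, $v_{\mathcal{X}}(x\setminus\{e\})\geq |x|-1$. Combining this with $v_{\mathcal{X}}(x\setminus\{e\})\leq v_{\mathcal{X}}(x)\leq |x|-1$ from monotonicity gives $v_{\mathcal{X}}(x\setminus\{e\})=|x|-1=v_{\mathcal{X}}(x)$. Hence every proper subset of $x$ is independent in $M^{*}$ while $x$ itself is dependent, so $x\in\mathcal{C}(M^{*})$.

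Finally, I would establish minimality and uniqueness. For any $\mathcal{X}$-matroid $N$, Lemma~\ref{bound} gives $\rank_{N}(F)\leq v_{\mathcal{X}}(F)=\rank_{M^{*}}(F)$ for all $F\subset[d]$. Since $\mathcal{I}(M^{*})\subseteq\mathcal{I}(N)$ is equivalent to $\rank_{N}\leq \rank_{M^{*}}$ pointwise, this translates directly to $\mathcal{D}(M^{*})\subseteq\mathcal{D}(N)$, i.e.~$M^{*}\leq N$ in the dependency order. Hence $M^{*}$ is a minimum element in the poset of $\mathcal{X}$-matroids; uniqueness follows because a matroid is determined by its rank function. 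The main obstacle I expect is step two, specifically pinning down the lower bound $v_{\mathcal{X}}(x\setminus\{e\})\geq |x|-1$ cleanly from Lemma~\ref{bound}, since it is here that the nonemptiness hypothesis on the set of $\mathcal{X}$-matroids is used in an essential way.
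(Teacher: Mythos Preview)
Your proof is correct and follows essentially the same approach as the paper's: the paper simply says to apply Lemma~\ref{bound} and repeat the argument of \cite[Lemma~1.2]{jackson2024maximal} (stated here as Lemma~\ref{unique 23}), and your sketch spells out exactly that argument. One small slip to fix: in the minimality step you write that $\mathcal{I}(M^{*})\subseteq\mathcal{I}(N)$ is equivalent to $\rank_{N}\leq \rank_{M^{*}}$, but the correct equivalence is $\mathcal{I}(N)\subseteq\mathcal{I}(M^{*})\Leftrightarrow \rank_{N}\leq\rank_{M^{*}}$; this is precisely what yields $\mathcal{D}(M^{*})\subseteq\mathcal{D}(N)$, so your conclusion $M^{*}\leq N$ stands once the containment is reversed.
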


We conjecture that the converse of Lemma~\ref{lema con} is also true.

\begin{conjecture}
Suppose that the set of $\mathcal{X}$-matroids is nonempty. Then the function $v_{\mathcal{X}}$ is submodular if and only if there exists a unique minimal $\mathcal{X}$-matroid.
\end{conjecture}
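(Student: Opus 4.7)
\medskip
\noindent\textbf{Proof proposal.} The forward direction, that submodularity of $v_{\mathcal{X}}$ implies the existence of a unique minimal $\mathcal{X}$-matroid, is Lemma~\ref{lema con}, so the plan concentrates entirely on the converse. The strategy is to assume the existence of a unique minimal $\mathcal{X}$-matroid $M^{*}$ and prove the stronger identity $v_{\mathcal{X}} = \rank_{M^{*}}$; submodularity of $v_{\mathcal{X}}$ then follows automatically, since matroid rank functions are always submodular.

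The inequality $\rank_{M^{*}} \leq v_{\mathcal{X}}$ is a direct application of Lemma~\ref{bound} to $M = M^{*}$. The next step is to verify that $\rank_{M^{*}}$ is a fixed point of the iterative refinement in Definition~\ref{defi}. For rule (i), whenever $A \cap B \subseteq x$ with $x \in \mathcal{X}$, one checks that $\rank_{M^{*}}(A \cap B) = \min\{|A \cap B|, |x|-1\}$: if $A \cap B \subsetneq x$, then $A \cap B$ is a proper subset of the circuit $x$ and is therefore independent, while if $A \cap B = x$, then $\rank_{M^{*}}(x) = |x|-1$. Combined with the standard submodularity of $\rank_{M^{*}}$, this ensures that rule (i) cannot reduce $\rank_{M^{*}}(A \cup B)$ any further. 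Rules (ii) and (iii) are excluded by monotonicity and by the unit-increment inequality $\rank_{M^{*}}(A) \leq \rank_{M^{*}}(B) + |A \setminus B|$ for $B \subseteq A$. Hence $\rank_{M^{*}}$ is already stable under the iteration.

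The crux of the argument is the reverse inequality $v_{\mathcal{X}} \leq \rank_{M^{*}}$, which the iteration does not deliver automatically: starting from $\textup{val}_{\mathcal{X}}$ the process reaches some fixed point $v_{\mathcal{X}}$, but a priori $v_{\mathcal{X}}$ may lie strictly above $\rank_{M^{*}}$ even though the latter is also a fixed point. The plan is to argue by contradiction: assuming $v_{\mathcal{X}}(F_{0}) > \rank_{M^{*}}(F_{0})$ for some $F_{0} \subseteq [d]$, construct an $\mathcal{X}$-matroid $M'$ that fails to dominate $M^{*}$ in the dependency order, thereby contradicting uniqueness. A natural route is through the contrapositive: from a hypothetical failure of submodularity at some pair $(A, B)$ one would produce two distinct minimal $\mathcal{X}$-matroids $M_{1}$ and $M_{2}$ by realizing the local slack in $v_{\mathcal{X}}$ on the two different sides of the asymmetry between $A$ and $B$, and argue that neither dominates the other in the dependency order.

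The main obstacle is precisely this matroid-construction step. The refinement rules in Definition~\ref{defi} capture only pairwise local consequences of submodularity, monotonicity, and unit-incrementality, whereas promoting a numerical upper bound into the rank function of an actual matroid is a genuinely global problem. Promising tools include matroid amalgamation and polymatroid extension techniques, together with an induction either on $|[d]|$ or on $|\mathcal{X}|$, perhaps combined with a careful case analysis according to whether $A \cap B$ is contained in no element, one element, or several elements of $\mathcal{X}$. Resolving this step is what keeps the statement conjectural rather than established.
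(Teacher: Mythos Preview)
The statement you are addressing is a \emph{conjecture} in the paper, not a theorem: the paper proves only the forward implication (Lemma~\ref{lema con}) and explicitly leaves the converse open. There is therefore no proof in the paper to compare your attempt against.

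Your proposal handles this honestly. You correctly invoke Lemma~\ref{lema con} for the forward direction, and your verification that $\rank_{M^{*}}$ is a fixed point of the refinement rules in Definition~\ref{defi} is sound (in particular, your computation $\rank_{M^{*}}(A\cap B)=\min\{|A\cap B|,|x|-1\}$ when $A\cap B\subseteq x\in\mathcal{X}$ is correct, since proper subsets of a circuit are independent). You also correctly isolate the real obstruction: the iteration may stabilise at some $v_{\mathcal{X}}$ strictly above $\rank_{M^{*}}$, and closing that gap would require producing a second minimal $\mathcal{X}$-matroid from a hypothetical failure of submodularity. Your closing sentence---that this construction step is what keeps the statement conjectural---is exactly the point; the paper does not claim to resolve it either.
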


\section{Identifying redundant matroid varieties in the decomposition}\label{appen}
Here, we present methods for identifying redundant matroid varieties within the decomposition of the circuit variety, 
and provide proofs for the technical lemmas used in Section~\ref{examples}.

\subsection{Techniques for verifying redundancy}

In this subsection, we address the following central question and develop techniques to resolve it.

\begin{question}\label{question}
Given realizable matroids $M$ and $N$ of rank three on the same ground set, is $V_{N}\subset V_{M}$?
\end{question}

A necessary condition for an affirmative answer to Question~\ref{question} is provided by the following lemma.

\begin{lemma}
Let $M$ and $N$ be realizable matroids of rank three on $[d]$. 
If $V_{N}\subset V_{M}$, then $N\geq M$.
\end{lemma}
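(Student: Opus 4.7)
The plan is to exploit the fact that ``being linearly dependent'' is a Zariski-closed condition, so the dependency pattern can only get larger when passing from $\Gamma_M$ to its closure $V_M$. Since $N$ is realizable, I can pick a concrete realization and track what $V_M$ forces on it.

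First I would use realizability of $N$ to select a point $\gamma=(\gamma_1,\ldots,\gamma_d) \in \Gamma_N$. By definition $\Gamma_N \subset V_N$, and the hypothesis $V_N \subset V_M$ gives $\gamma \in V_M$. Next, I would identify a family of polynomials vanishing on $V_M$. Since $V_M$ is the Zariski closure of $\Gamma_M$, any polynomial that vanishes on $\Gamma_M$ vanishes on $V_M$. For each dependent set $S \in \mathcal{D}(M)$ with $|S|\leq 3$, every $|S|\times|S|$ minor of the submatrix indexed by $S$ vanishes identically on $\Gamma_M$ (this is just the statement that the columns indexed by $S$ are linearly dependent in every realization of $M$). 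For $|S|\geq 4$ dependency is automatic since $\operatorname{rank}(M)=3$, so there is nothing to check.

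Applying those vanishing conditions to $\gamma \in V_M$: for each $S\in\mathcal{D}(M)$, the vectors $\{\gamma_i\}_{i\in S}$ must be linearly dependent. But $\gamma$ is a realization of $N$, so linear dependencies of $\gamma$ correspond exactly to dependent sets of $N$. Therefore every $S \in \mathcal{D}(M)$ also lies in $\mathcal{D}(N)$, i.e.\ $\mathcal{D}(M)\subset \mathcal{D}(N)$, which by Definition~\ref{dependency} is precisely $N\geq M$.

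There is no real obstacle here; the statement is essentially a semicontinuity-of-rank argument. The only subtlety worth flagging is to state explicitly that the polynomials cutting out the dependency condition (the relevant minors) are defined on the ambient space $\CC^{nd}$, so their vanishing on the dense subset $\Gamma_M \subset V_M$ propagates to all of $V_M$, and in particular to the chosen $\gamma$.
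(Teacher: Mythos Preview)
Your proof is correct and follows essentially the same approach as the paper: pick a realization $\gamma\in\Gamma_N\subset V_N\subset V_M$ and observe that the minor conditions coming from $\mathcal{D}(M)$ vanish on $V_M$, forcing $\mathcal{D}(M)\subset\mathcal{D}(N)$. The paper phrases this as a proof by contradiction and leaves the closure step implicit, while you argue directly and spell out why the dependency conditions persist to the Zariski closure; the substance is the same.
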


\begin{proof}
Suppose, for contradiction, that $N\not \geq M$. Then there exists a subset $A\subset [d]$ such that $A\in \mathcal{D}(M)\backslash \mathcal{D}(N)$. For any realization $\gamma\in \Gamma_{N}$, the set $\{\gamma_{p}:p\in A\}$ is independent, contradicting the assumption that $\gamma\in V_{M}$.
\end{proof}

To explore Question~\ref{question}, we introduce the following definitions.

\begin{definition}\normalfont
The  projective realization space $\mathcal{R}(M)$ of a rank-three matroid $M$ consists of all the collections of points $\gamma=\{\gamma_{1},\ldots,\gamma_{d}\}\subset \mathbb{P}^{2}$ that satisfy the following condition:
\begin{equation*}
\{\gamma_{i_{1}},\ldots,\gamma_{i_{k}}\}\ \text{are linearly dependent} \Longleftrightarrow \{i_{1},\ldots,i_{k}\} \ \text{is dependent in $M$}.
\end{equation*}
The {\em moduli space} $\mathcal{M}(M)$ of the matroid $M$ is then defined as the quotient of this realization space by the action of the projective general linear group $\text{PGL}_{3}(\CC)=\text{GL}_{3}(\CC)/\CC^{\ast}$.
\end{definition}

Suppose $M$ contains a circuit of size $4$, which we can assume, up to relabeling, to be $\{1,2,3,4\}$. Each isomorphism class in $\mathcal{M}(M)$ has a unique representative $\gamma\in \mathcal{R}(M)$ where 
\begin{equation}\label{ecua matr}\{\gamma_{1},\gamma_{2},\gamma_{3},\gamma_{4}\}=
\begin{pmatrix}
1 & 0 &0 & 1\\
0 & 1 & 0 & 1\\
0 & 0 & 1 & 1
\end{pmatrix}.
\end{equation}
Thus, 
$\mathcal{M}(M)$ can be described as the set of all collections of points $\gamma=\{\gamma_{1},\ldots,\gamma_{d}\}\subset \mathbb{P}^{2}$ such that:
\begin{itemize}
\item Equality~\eqref{ecua matr} holds for $\{\gamma_{1},\ldots,\gamma_{4}\}$ and 
$\gamma \in \mathcal{R}(M)$.
\end{itemize}

\begin{definition}
Let $M$ be a point-line configuration on $[d]$ with lines $\mathcal{L}$, and consider $l\in \mathcal{L}$ and $p\in l$. An {\em elementary perturbation} of $M$ at $\{l,p\}$ is the point-line configuration $\widetilde{M}$ on $[d]$ with the set of lines:
\[\mathcal{L}_{\widetilde{M}}=\begin{cases}
(\mathcal{L}\setminus \{l\})\cup \{l\setminus \{p\}\} \ & \text{if $\size{l}>3$,}\\
(\mathcal{L}\setminus \{l\}) \ & \text{if $\size{l}=3$.}
\end{cases}\]
An $m$-{\em perturbation} of $M$ is a sequence of $m$ elementary perturbations, 
$M_{0},M_{1},\ldots,M_{m}=M$,
where $M_{i-1}$ is an elementary perturbation of $M_{i}$ for each $i\in [m]$ and $M_{0}$ is solvable.
\end{definition}

The following result from \cite{guerville2023connectivity} offers a tool for addressing Question~\ref{question}. 

\begin{theorem}\textup{\cite[Theorem~4.5]{guerville2023connectivity}}\label{poli}
Let $M$ be a point-line configuration that admits an $m$-perturbation. 
Then, there exists 
$d\in \mathbb{N}$, an open subset $U\subset \CC^{d}$ and $m$ polynomials $P_{1},\ldots,P_{m}$ such that
\[\mathcal{M}(M)\cong U\cap V(P_{1},\ldots,P_{m}).\]
\end{theorem}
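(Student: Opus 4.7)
The plan is to proceed by induction on $m$, treating an $m$-perturbation as a filtration
\[
M_{0} \leq M_{1} \leq \cdots \leq M_{m} = M,
\]
where each step $M_{i-1}\to M_{i}$ reverses an elementary perturbation, hence adds exactly one collinearity relation to the configuration. The base case will exploit solvability of $M_{0}$ to realize $\mathcal{M}(M_{0})$ as an open subset $U \subset \CC^{d}$, and the inductive step will cut out $\mathcal{M}(M_{i})$ inside $\mathcal{M}(M_{i-1})$ by a single polynomial equation.

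For the base case, I would use the fact that solvability of $M_{0}$ yields a recursive construction of any realization: starting from an initial frame (which can be normalized via \eqref{ecua matr} if the four-point circuit of $M$ already lies in $M_{0}$, or an analogous frame adapted to $M_{0}$ otherwise), every remaining point is either a free parameter on $\mathbb{P}^{2}$, a free parameter along a previously constructed line, or rationally determined as the intersection of two already-constructed lines. This explicit birational parametrization identifies $\mathcal{M}(M_{0})$ with an open subset $U \subset \CC^{d}$, where $d$ is the total number of free parameters accumulated; the openness encodes the required non-collinearities and distinctness conditions, as well as the non-vanishing of denominators in the intersection formulas.

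For the inductive step, the key observation is that an elementary perturbation either deletes a point from a line of size at least four, or deletes an entire three-element line. Going from $M_{i-1}$ to $M_{i}$ therefore imposes exactly one new collinearity condition on three distinguished points $\gamma_{a},\gamma_{b},\gamma_{c}\in\mathbb{P}^{2}$, encoded by the vanishing of the single $3\times 3$ determinant
\[
P_{i} \;=\; \det\bigl(\gamma_{a}\mid\gamma_{b}\mid\gamma_{c}\bigr),
\]
which pulls back under the parametrization of $\mathcal{M}(M_{0})$ to a polynomial in the coordinates on $U$. All further constraints distinguishing $\mathcal{R}(M_{i})$ from a strictly larger dependency locus (no extra coincidences, no extra collinearities) are open conditions, which can be absorbed by shrinking $U$ to a smaller basic open set without changing $d$ or the polynomials $P_{1},\ldots,P_{i}$. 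Iterating yields $\mathcal{M}(M) \cong U\cap V(P_{1},\ldots,P_{m})$.

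The main obstacle I foresee is the bookkeeping in the inductive step: one must ensure that the $P_{i}$ are well-defined polynomials on a fixed ambient space and that the open set $U$ can be chosen uniformly, so that none of the non-incidence conditions forced by $M_{i}$ (as opposed to some matroid strictly above $M_{i-1}$) leak into the algebraic description and inflate the number of equations beyond $m$. Establishing this requires showing that each such non-incidence is either already implied by the open conditions built into $U$ or can be added to $U$ as a further basic open locus, independently of the polynomial constraints $P_{1},\ldots,P_{m}$. A secondary technical point is justifying that the rational parametrization of $\mathcal{M}(M_{0})$ extends over a Zariski-open subset large enough to contain the pullback of $\mathcal{M}(M)$; this is where solvability of $M_{0}$, rather than merely realizability, is essential.
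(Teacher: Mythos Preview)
The paper does not supply its own proof of this statement: Theorem~\ref{poli} is quoted verbatim from \cite[Theorem~4.5]{guerville2023connectivity}, and the only commentary is the sentence immediately following it, which points to the ``constructive proof of this theorem in \cite{guerville2023connectivity}'' (or to \cite[Procedure~(1)]{Fatemeh6}) for an explicit description of $U$ and the $P_i$. So there is no in-house argument to compare against.

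That said, your sketch is on the right track and is essentially the structure of the proof in the cited source: parametrize $\mathcal{M}(M_0)$ as a Zariski-open set using the solvable recursion, then impose one determinantal equation per reversed elementary perturbation. Two small clarifications are worth making. First, in the case $\lvert l\rvert>3$ the step $M_{i-1}\to M_i$ adjoins a point $p$ to an existing line, which nominally forces several new $3$-term collinearities $\{p,x,y\}$ with $x,y\in l\setminus\{p\}$; you should say explicitly why this is still a \emph{single} polynomial condition (once $p$ lies on the span of two points of the line, the remaining collinearities are automatic). Second, your worry about non-incidence conditions ``leaking'' into the equations is resolved by the observation that passing from $M_{i-1}$ to $M_i$ only \emph{removes} independence constraints, so the open set for $M_i$ is obtained from that for $M_{i-1}$ by deleting inequalities, never by adding algebraic equalities; the only possible enlargement of the closed locus is the new $P_i$. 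With these two points spelled out, your argument is complete and matches the intended one.
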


Moreover, by following the constructive proof of this theorem in \cite{guerville2023connectivity} or by referring to \textup{\cite[Procedure~(1)]{Fatemeh6}}, we can obtain an explicit description of the open set $U$ and the polynomials $P_i$. 

\smallskip 

The following lemma offers an affirmative answer to Question~\ref{question} for a specific family of matroids.

\begin{lemma} \label{uti}
Let $M$ be a realizable point-line configuration on $[d]$ with lines $\mathcal{L}$, and let $l\in \mathcal{L}$ 
containing exactly three points. Let $N$ be the matroid of rank three with
\begin{equation*}
\mathcal{C}_{1}(N)=\emptyset, \quad \mathcal{C}_{2}(N)=\{\{i,j\}:\text{$i,j\in [d]\setminus l,i\neq j$}\}, \quad \text{and} \quad 
\mathcal{C}_{3}(N)=\{l\}.
\end{equation*}
Then, $V_{N}\subset V_{M}$.
\end{lemma}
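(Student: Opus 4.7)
The plan is to establish $V_N \subset V_M$ by a degeneration argument: for every $\gamma \in \Gamma_N$, I will construct a one-parameter family $\nu^{(t)} \in \Gamma_M$ with $\nu^{(t)} \to \gamma$ as $t \to 0$. Since $V_M$ is Zariski closed, this places $\gamma \in V_M$, and taking closures gives $V_N = \overline{\Gamma_N} \subset V_M$.

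I begin by describing $\Gamma_N$ explicitly. Writing $l = \{i_1, i_2, i_3\}$, a realization $\gamma \in \Gamma_N$ consists of three pairwise linearly independent vectors $\gamma_{i_1}, \gamma_{i_2}, \gamma_{i_3}$ lying on a common $2$-plane $\Pi \subset \mathbb{C}^3$, together with a single vector $q \notin \Pi$ assigned to every index $j \in [d]\setminus l$ (the rank condition forces $q \notin \Pi$, and the absence of a $2$-circuit in $l$ forces the three vectors to be pairwise non-proportional). Next, using that $M$ is realizable, I fix some $\mu \in \Gamma_M$ and exploit the actions of $GL_3(\mathbb{C})$ and of the diagonal torus $(\mathbb{C}^*)^d$ (rescaling individual vectors), both of which preserve $\Gamma_M$, to move $\mu$ to a realization $\mu' \in \Gamma_M$ satisfying $\mu'_{i_k} = \gamma_{i_k}$ for $k=1,2,3$ and $\mu'_{j_0} = q$ for some fixed $j_0 \notin l$. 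This alignment is possible because $\{i_1, i_2, j_0\}$ is a basis of $M$: in a point-line configuration, the only line of $M$ containing both $i_1$ and $i_2$ is $l$ itself.

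For the degeneration, I choose the basis $\{v_1, v_2, v_3\} = \{\gamma_{i_1}, \gamma_{i_2}, q\}$ of $\mathbb{C}^3$ and expand $\mu'_j = a_j v_1 + b_j v_2 + c_j v_3$ for each $j \notin l$; the same independence observation (applied to $\{i_1, i_2, j\}$) forces $c_j \neq 0$. Let $T_t \in GL_3(\mathbb{C})$ be the diagonal transformation $\mathrm{diag}(t,t,1)$ in this basis. I define $\nu^{(t)}$ by first applying $T_t$ to every vector of $\mu'$ and then rescaling individually: each $\mu'_i$ with $i \in l$ by $1/t$, and each $\mu'_j$ with $j \notin l$ by $1/c_j$. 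Since $T_t$ is invertible and individual rescaling preserves the matroid, $\nu^{(t)} \in \Gamma_M$ for all $t \in \mathbb{C}^*$. A direct computation gives
\[
\nu^{(t)}_i = \gamma_i \;\text{ for } i \in l, \qquad \nu^{(t)}_j = \tfrac{ta_j}{c_j}\,v_1 + \tfrac{tb_j}{c_j}\,v_2 + v_3 \;\text{ for } j \notin l,
\]
so $\nu^{(t)} \to \gamma$ as $t \to 0$, which completes the argument.

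The main subtlety lies in the first alignment step: one must match not just the three collinear points projectively but also the scalars in the affine dependency $\mu'_{i_3} = \alpha\mu'_{i_1} + \beta\mu'_{i_2}$ to those of $\gamma_{i_3} = \alpha\gamma_{i_1} + \beta\gamma_{i_2}$. I would handle this by first rescaling $\mu_{i_1}$ and $\mu_{i_2}$ via the diagonal torus to adjust the coefficients of the dependency relation, and then applying the unique $A \in GL_3$ that maps the basis $\{\mu_{i_1}, \mu_{i_2}, \mu_{j_0}\}$ to $\{v_1, v_2, v_3\}$; under this $A$, the image of $\mu_{i_3}$ automatically equals $\gamma_{i_3}$. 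All the scalars involved are nonzero because the three points on $l$ (in both $\mu$ and $\gamma$) are pairwise non-proportional, so the construction goes through.
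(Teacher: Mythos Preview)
Your proof is correct and follows essentially the same degeneration argument as the paper: both show that every $\gamma\in\Gamma_N$ is a limit of a one-parameter family in $\Gamma_M$ obtained by scaling the ``in-plane'' directions to zero. The paper streamlines the argument by first normalizing $\gamma$ to a fixed matrix (using that $N$ has a unique realization up to $GL_3$ and column scaling), whereas you instead normalize the realization $\mu$ of $M$ to match a given $\gamma$; these are dual ways of carrying out the same alignment, and your version is slightly more explicit about why each $c_j\neq 0$.
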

\begin{proof}
Without loss of generality, assume that $l=\{1,2,3\}$. To prove the lemma, we will show that any collection of vectors $\gamma \in \Gamma_{N}$ can be infinitesimally perturbed to obtain $\widetilde{\gamma}\in \Gamma_{M}$. Since $N$ has a unique realization up to projective transformations, we can assume that
\[\gamma=\{\gamma_{1},\ldots,\gamma_{d}\}=
\begin{pmatrix}
1& 0 & 1 &0 &\ldots &\ldots & 0 \\
0&1 & 1 & 0& \ldots&\ldots & 0 \\
0& 0& 0& 1 &\ldots & \ldots & 1 
\end{pmatrix}.\]
Since 
$\Gamma_{M}$ is not empty, 
and since it is invariant under projective transformations, we know that there exists $\tau\in \Gamma_{M}$ of the form:
\[\tau=\{\tau_{1},\ldots,\tau_{d}\}=
\begin{pmatrix}
1& 0 & 1 &x_{1} &\ldots &\ldots & x_{d-3} \\
0&1 & 1 & y_{1}& \ldots&\ldots & y_{d-3} \\
0& 0& 0& 1 &\ldots & \ldots & 1 
\end{pmatrix}.\]
Moreover, for any $\lambda\in \CC\backslash \{0\}$, the collection of vectors 
\[\tau_{\lambda}=\{{\tau_{\lambda}}_{1},\ldots,{\tau_{\lambda}}_{d}\}=
\begin{pmatrix}
1& 0 & 1 &\lambda x_{1} &\ldots &\ldots & \lambda x_{d-3} \\
0&1 & 1 & \lambda y_{1}& \ldots&\ldots & \lambda y_{d-3} \\
0& 0& 0& 1 &\ldots & \ldots & 1 
\end{pmatrix},\]
also belongs to $\Gamma_{M}$. Therefore, for $\lambda$ infinitesimally close to zero, the collection of vectors $\tau_{\lambda}$ is an infinitesimal motion of $\gamma$, which completes the proof.
\end{proof}

\subsection{Proofs of lemmas from Section~\ref{examples}}

In this subsection, we provide proofs for the lemmas left unproven in Section~\ref{examples}.
\begin{proof}[{\bf Proof of Lemma~\ref{deco N}.}] By applying Algorithm~\ref{algo m}, we determine that $\min(N)$ consists of the following matroids; see Figure~\ref{min N} from left to right:

\begin{itemize}
\item[{\rm (i)}] The uniform matroid $U_{2,7}$.
\item[{\rm (ii)}] The matroids $N(i)$ for $i\in [7]$.
\item[{\rm (iii)}] The matroid obtained by identifying the points $\{1,4,6\}$, along with the matroids derived from it by applying an automorphism of $N$.
\item[{\rm (iv)}] The matroid obtained by identifying $1$ with $4$ and $2$ with $6$, along with the matroids derived from it by applying an automorphism of $N$.
\item[{\rm (v)}] The matroid obtained by identifying $1$ with $7$ and $2$ with $5$ together with the other two matroids derived from it by applying an automorphism of $N$.
\item[{\rm (vi)}] The matroid obtained by identifying $1$ with $5$ together with the other two matroids derived from it by applying an automorphism of $N$.
\end{itemize}

\begin{figure}[H]
    \centering
    \includegraphics[width=1\textwidth, trim=0 0 0 0, clip]{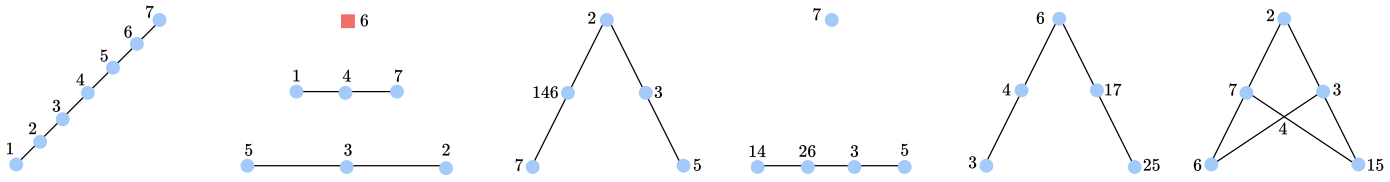}
    \caption{Minimal matroids of $N$~with~lines~$\{6,3,4\},\{6,2,7\},\{6,1,5\},\{2,3,5\},\{1,4,7\}$.}
    \label{min N}
\end{figure}

Let $A_{i}$ and $B_{i}$ for $i\in [6]$ denote the matroids of the third and fourth types, respectively. Similarly, let $C_{j}$ and $D_{j}$ for $j\in [3]$ denote the matroids of the fifth and sixth types. From this, we derive the following decomposition:
\begin{equation}\label{vn}
V_{\mathcal{C}(N)}=V_{N}\cup V_{U_{2,7}}\bigcup_{i=1}^{7}V_{\mathcal{C}(N(i))}\bigcup_{i=1}^{6}V_{\mathcal{C}(A_{i})}\bigcup_{i=1}^{6}V_{\mathcal{C}(B_{i})}\bigcup_{i=1}^{3}V_{\mathcal{C}(C_{i})}\bigcup_{i=1}^{3}V_{\mathcal{C}(D_{i})}.
\end{equation}
Note that the matroids $N(i),A_{i},B_{i}$ and $C_{i}$ are nilpotent, which implies that their matroid and circuit varieties coincide. Additionally, by Example~\ref{ej quad}, we have $V_{\mathcal{C}(D_{i})}=V_{D_{i}}\cup V_{U_{2,6}}$, since each $D_{i}$ is isomorphic to the quadrilateral set $QS$. Using these observations, Equation~\eqref{vn} becomes:
\begin{equation}\label{vn2}
V_{\mathcal{C}(N)}=V_{N}\cup V_{U_{2,7}}\bigcup_{i=1}^{7}V_{N(i)}\bigcup_{i=1}^{6}V_{A_{i}}\bigcup_{i=1}^{6}V_{B_{i}}\bigcup_{i=1}^{3}V_{C_{i}}\bigcup_{i=1}^{3}V_{D_{i}}.
\end{equation}

It is straightforward to verify that the matroid varieties $V_{A_{i}},V_{B_{i}},V_{C_{i}},V_{D_{i}}$ and $V_{N(j)}$ for $j\in \{1,2,3,4,5,7\}$ are contained within $V_{N}$. From this, we conclude that $V_{\mathcal{C}(N)}$ has the following irreducible decomposition:
$V_{\mathcal{C}(N)}=V_{N}\cup V_{U_{2,7}}\cup V_{N(6)}$,
which completes the proof.
\end{proof}

\medskip
\noindent
{\bf Proof of Lemmas~\ref{mov inc} and~\ref{mov inc 2}:} These results follow directly as corollaries of Lemma~\ref{uti}.

\begin{proof}[{\bf Proof of Lemma~\ref{dl}.}]
By applying Theorem~\ref{poli}, $\mathcal{M}(M_{\text{Pappus}})$ consists of all matrices of the form
\begin{equation}\label{matrs}\begin{pmatrix}
1 & 0 & 0 & 1 & 1 & y & 1 & x+y-1 & y \\
0 & 1 & 0 & 1 & x & y & 0 & xy & xy \\
0 & 0 & 1 & 1 & 0 & 1 & 1 & x & 1
\end{pmatrix},\end{equation}
where the columns correspond to the points $(1,2,8,9,3,7,6,5,4)$, respectively, and the minors corresponding to bases are nonzero.

(i) To establish this claim, we need to show that $\cup_{j=1}^{3} V_{\mathcal{C}(A_{j})}$ is contained in the right-hand side of Equation~\eqref{ecu dl}. Using Algorithm~\ref{algo m}, we identify the matroids $\{N: N\geq A_{j} \ \text{for some $j\in [3]$}\}$ that are not greater than or equal to any matroid associated with the varieties on the right-hand side of Equation~\eqref{ecu dl}. These matroids are obtained from  $M_{\text{Pappus}}$ by adding as lines some of the triples $\{1,4,9\},\{2,5,8\}$ and $\{3,6,7\}$.

To complete the proof, we will demonstrate that if $N$ is one such matroid, then $V_{N}\subset V_{M_{\text{Pappus}}}$. 
Specifically, given $\gamma \in \Gamma_{N}$, we will show that its vectors can be perturbed infinitesimally to yield $\widetilde{\gamma}\in \Gamma_{M_{\text{Pappus}}}$. Applying Theorem~\ref{poli}, it follows that $\gamma$ is of the form:
\[\begin{pmatrix}
1 & 0 & 0 & 1 & 1 & y & 1 & x+y-1 & y \\
0 & 1 & 0 & 1 & x & y & 0 & xy & xy \\
0 & 0 & 1 & 1 & 0 & 1 & 1 & x & 1
\end{pmatrix},\]
where the columns correspond to the points $(1,2,8,9,3,7,6,5,4)$, respectively, and the associated minors to bases are nonzero, except possibly for $\{1,4,9\},\{2,5,8\}$ and $\{3,6,7\}$. The vanishing of these minors is equivalent to the vanishing of the polynomials $xy-1,x+y-1$ and $xy+x-y$. By perturbing $x$ and $y$ infinitesimally to ensure these polynomials do not vanish, we obtain a collection $\widetilde{\gamma}\in \Gamma_{M_{\text{Pappus}}}$.

\medskip
(ii) Since all the matroids $B_{j}$ (Figure~\ref{min pap} (Center)) are related by automorphisms of $M_{\text{Pappus}}$, we may assume without loss of generality that we are considering the matroid $N$ obtained from $M_{\text{Pappus}}$ by identifying the points $\{3,4,5\}$. To establish this item, we need to show that $V_{\mathcal{C}(N)}\subset V_{M_{\text{Pappus}}}\cup V_{U_{2,9}}$.

By Theorem~\ref{nil coincide}, we know that $V_{\mathcal{C}(N)}=V_{N}\cup V_{U_{2,9}}$, so it remains to show that $V_{N}\subset V_{M_{\text{Pappus}}}$. To establish this, we show that for any $\gamma \in \Gamma_{N}$, we can perturb its vectors infinitesimally to obtain $\widetilde{\gamma}\in \Gamma_{M_{\text{Pappus}}}$. By applying a suitable projective transformation, we can assume that  $\gamma$ is of the form:
\[\begin{pmatrix}
1 & 0 & 0 & 1 & 1 & 1 & 1 & 1 & 1 \\
0 & 1 & 0 & 1 & z & 1 & 0 & z & z \\
0 & 0 & 1 & 1 & 0 & 0 & 1 & 0 & 0
\end{pmatrix},\]
where the columns correspond to the points $(1,2,8,9,3,7,6,5,4)$.
Setting $x=z$ and $y=\epsilon^{-1}$ in the matrix from Equation~\eqref{matrs}, we obtain:
\begin{equation}\label{matriz}\begin{pmatrix}
1 & 0 & 0 & 1 & 1 & 1 & 1 & 1+\epsilon(z-1) & 1 \\
0 & 1 & 0 & 1 & z & 1 & 0 & z & z \\
0 & 0 & 1 & 1 & 0 & \epsilon & 1 & \epsilon z & \epsilon 
\end{pmatrix},\end{equation}
which realizes $M_{\text{Pappus}}$ if the minors corresponding to bases do not vanish. By taking $\epsilon$ infinitesimally close to $0$, this matrix provides a realization $\widetilde{\gamma}\in \Gamma_{M_{\text{Pappus}}}$, giving an infinitesimal perturbation of $\gamma$.

\medskip
(iii) Since all the matroids $C_{k}$ (Figure~\ref{min pap} (Right)) are related by automorphisms of $M_{\text{Pappus}}$, we may assume without loss of generality that we are considering the matroid $E$ obtained from $M_{\text{Pappus}}$ by identifying $1=3,4=6$ and $7=9$. To establish this item, we need to show that $V_{\mathcal{C}(E)}\subset V_{M_{\text{Pappus}}}$. 

Since $E$ is nilpotent, we know that $V_{\mathcal{C}(E)}=V_{E}$, so it remains to prove that $V_{E}\subset V_{M_{\text{Pappus}}}$. To demonstrate this, we show that for any $\gamma \in \Gamma_{E}$, its vectors can be infinitesimally perturbed to obtain $\widetilde{\gamma}\in \Gamma_{M_{\text{Pappus}}}$. By applying a projective transformation, we may assume $\gamma$ is of the form:
\[\begin{pmatrix}
1 & 0 & 0 & 1 & 1 & 1 & 1 & z & 1 \\
0 & 1 & 0 & 1 & 0 & 1 & 0 & 1 & 0 \\
0 & 0 & 1 & 1 & 0 & 1 & 1 & 1 & 1
\end{pmatrix},\]
where the columns correspond to the points $(1,2,8,9,3,7,6,5,4)$.
By setting $y=(z-1)x+1$ in the matrix from Equation~\eqref{matrs}, we obtain the matrix
\begin{equation}\label{matri}\begin{pmatrix}
1 & 0 & 0 & 1 & 1 & (z-1)x+1 & 1 & z & (z-1)x+1 \\
0 & 1 & 0 & 1 & x & (z-1)x+1 & 0 & (z-1)x+1 & x((z-1)x+1) \\
0 & 0 & 1 & 1 & 0 & 1 & 1 & 1 & 1
\end{pmatrix},\end{equation}
which realizes $M_{\text{Pappus}}$ if the minors corresponding to bases do not vanish. By taking $x$ infinitesimally close to $0$, 
we obtain a realization $\widetilde{\gamma} \in \Gamma_{M_{\text{Pappus}}}$, representing an infinitesimal perturbation of $\gamma$.
\end{proof}

\begin{proof}[{\bf Proof of Lemma~\ref{hl1}.}] (i) To prove this claim, we will show that $\cup_{l=1}^{18} V_{\mathcal{C}(H_{l})}$ is contained in the right-hand side of 
\eqref{equ papus 3}. Using Algorithm~\ref{algo m}, we identify the matroids $\{N: N\geq H_{l} \ \text{for some $j\in [18]$}\}$ that are not greater than or equal to any matroid associated with the varieties on the right-hand side of 
\eqref{equ papus 3}. These matroids, up to automorphism, are all at least one of the matroids obtained from  $M_{\text{Pappus}}$ by making $4$ a loop and adding as lines some of the triples $\{2,5,8\}$ and $\{3,6,7\}$.

To complete the proof, we will demonstrate that if $N$ is one such matroid, then $V_{N}\subset V_{M_{\text{Pappus}}(4)}$. 
Specifically, given $\gamma \in \Gamma_{N}$, we will show that its vectors can be perturbed infinitesimally to yield $\widetilde{\gamma}\in \Gamma_{M_{\text{Pappus}}(4)}$. Applying Theorem~\ref{poli}, it follows that $\gamma$ is of the form:
\[\begin{pmatrix}
1 & 0 & 0 & 1 & 1 & y & 1 & x+y-1 & 0 \\
0 & 1 & 0 & 1 & x & y & 0 & xy & 0 \\
0 & 0 & 1 & 1 & 0 & 1 & 1 & x & 0
\end{pmatrix},\]
where the columns correspond to the points $(1,2,8,9,3,7,6,5,4)$, respectively, and the minors corresponding to the bases do not vanish, except possibly for $\{2,5,8\}$ and $\{3,6,7\}$. The vanishing of this minors is equivalent to the vanishing of the polynomials $x+y-1$ and $xy+x-y$. Therefore, we can perturb the values of $x$ and $y$ infinitesimally in such a way that these equalities no longer hold, thus obtaining a collection $\widetilde{\gamma}\in \Gamma_{M_{\text{Pappus}}(4)}$, as desired.

\medskip
(ii) Since all the matroids $G_{k}$ are related by automorphisms of $M_{\text{Pappus}}$, we may assume without loss of generality that we are considering the matroid $N$ obtained from $M_{\text{Pappus}}$ by making the points $4$ and $5$ loops. To establish this item, we need to show that $V_{\mathcal{C}(N)}\subset V_{M_{\text{Pappus}}}$. 

Since $N$ is nilpotent, we know that $V_{\mathcal{C}(N)}=V_{N}$, 
so it remains to prove that $V_{N}\subset V_{M_{\text{Pappus}}}$. To demonstrate this, we show that for any $\gamma \in \Gamma_{N}$, its vectors can be infinitesimally perturbed to obtain $\widetilde{\gamma}\in \Gamma_{M_{\text{Pappus}}}$. By applying a projective transformation, we may assume $\gamma$ is of the form:
\begin{equation}\begin{pmatrix}\label{sñ}
1 & 0 & 0 & 1 & 1 & y & 1 & 0 & 0  \\
0 & 1 & 0 & 1 & x & y & 0 & 0 & 0 \\
0 & 0 & 1 & 1 & 0 & 1 & 1 & 0 & 0
\end{pmatrix},\end{equation}
where the columns correspond to the points $(1,2,8,9,3,7,6,5,4)$, respectively, and the minors corresponding to the bases do not vanish. By applying an infinitesimal perturbation to $\gamma$, we shift the last two columns from $(0,0,0)$ to $(x+y-1,xy,x)$ and $(x,xy,1)$, respectively. Furthermore, by making a small perturbation of $x$ and $y$ o ensure that the minors corresponding to the basis elements do not vanish, we obtain a realization of $M_{\text{Pappus}}$. This completes the proof.

\medskip
(iii) Without loss of generality, to prove this statement, it suffices to show that $V_{N}\subset V_{M_{\text{Pappus}}}$ for $N=M_{\text{Pappus}}(4)$. Let $\gamma \in \Gamma_{N}$.
By applying Theorem~\ref{poli}, we can assume that $\gamma$ is of the form:
\begin{equation}\begin{pmatrix}
1 & 0 & 0 & 1 & 1 & y & 1 & x+y-1 & 0  \\
0 & 1 & 0 & 1 & x & y & 0 & xy & 0 \\
0 & 0 & 1 & 1 & 0 & 1 & 1 & x & 0
\end{pmatrix},\end{equation}
where the columns correspond to the points $(1,2,8,9,3,7,6,5,4)$, respectively, and the minors corresponding to the bases do not vanish. By applying an infinitesimal perturbation to $\gamma$, we shift the last column from $(0,0,0)$ to $(x,xy,1)$. Additionally, by making a small adjustment to $x$ and $y$, we ensure that the minors corresponding to the basis elements do not vanish, thereby obtaining a realization of $M_{\text{Pappus}}$. This completes the proof.

\medskip
(iv) The matroids $F_{j}$ are isomorphic (up to the removal of loops) to the configuration of the three concurrent lines, so this item follows directly from \textup{\cite[Example~3.6]{clarke2021matroid}}.
\end{proof}

\begin{proof}[{\bf Proof of Lemma~\ref{l1}.}]\label{page:5.5.}
Applying Theorem~\ref{poli}, we find that $\mathcal{M}(\mathcal{K}_{9})$ consists of all matrices of form: 
\begin{equation}\label{matris}\begin{pmatrix}
1 & 0 & 0 & 1 & 1 & y & y-xy-1 & y-xy-1 & y\\
0 & 1 & 0 & 1 & x & y & -x & 0 & xy\\
0 & 0 & 1 & 1 & 0 & 1 & -x & -x & 1
\end{pmatrix},\end{equation}
where the columns represent the points $(1,6,2,7,3,8,9,4,5)$, respectively. Moreover, the minors corresponding to basis elements do not vanish and 
$\textstyle{\det \begin{pmatrix}
 1 & xy+1-y & y\\
 1 & 0 & xy\\
 1 & x & 1
\end{pmatrix}=0}.$

\medskip
(i) Since all the matroids $B_{i}$ are related by automorphisms of $\mathcal{K}_{9}$, we may assume without loss of generality that we are considering the matroid $N$ obtained from $\mathcal{K}_{9}$ by identifying the points $\{1,3,4,5,9\}$. To establish this item, we need to show that $V_{\mathcal{C}(N)}\subset V_{\mathcal{K}_{9}}$. 

Since $N$ is nilpotent, we know that $V_{\mathcal{C}(N)}=V_{N}$, 
so it remains to prove that $V_{N}\subset V_{\mathcal{K}_{9}}$. To demonstrate this, we show that for any $\gamma \in \Gamma_{N}$, its vectors can be infinitesimally perturbed to obtain $\widetilde{\gamma}\in \Gamma_{\mathcal{K}_{9}}$. By applying a projective transformation, we may assume $\gamma$ is of the form:
\[\begin{pmatrix}
1 & 0 & 0 & 1 & 1 & 1 & 1 & 1 & 1\\
0 & 1 & 0 & 1 & 0 & 1 & 0 & 0 & 0\\
0 & 0 & 1 & 1 & 0 & 0 & 0 & 0 & 0
\end{pmatrix},\]
where the columns correspond to the points $(1,6,2,7,3,8,9,4,5)$, respectively.  By performing the change of variables $y=e^{-1}$ in the matrix of Equation~\eqref{matris}, we obtain the matrix
\begin{equation}\label{matris 2}\begin{pmatrix}
1 & 0 & 0 & 1 & 1 & 1 & 1-x-\epsilon & 1-x-\epsilon & 1\\
0 & 1 & 0 & 1 & x & 1 & -x\epsilon & 0 & x\\
0 & 0 & 1 & 1 & 0 & \epsilon & -x\epsilon & -x\epsilon & \epsilon
\end{pmatrix},\end{equation}
which realizes $\mathcal{K}_{9}$ if the minors corresponding to basis elements do not vanish and the minor corresponding to  $\{4,5,7\}$ vanishes. By taking $x$ and $\epsilon$ infinitesimally close to $0$ in the matrix of Equation~\eqref{matris 2}, we obtain a realization $\widetilde{\gamma}\in \Gamma_{\mathcal{K}_{9}}$, representing an infinitesimal motion of $\gamma$.

\medskip
(ii) Since all the matroids $C_{i}$ are related by automorphisms of $\mathcal{K}_{9}$, we may assume without loss of generality that we are considering the matroid $N$ obtained from $\mathcal{K}_{9}$ by identifying the points $\{3,6\}$ and $\{2,4,5,8\}$. To establish this item, we need to show that $V_{\mathcal{C}(N)}\subset V_{\mathcal{K}_{9}}$. 

Since $N$ is nilpotent, we know that $V_{\mathcal{C}(N)}=V_{N}$, 
so it remains to prove that $V_{N}\subset V_{\mathcal{K}_{9}}$. To demonstrate this, we show that for any $\gamma \in \Gamma_{N}$, its vectors can be infinitesimally perturbed to obtain $\widetilde{\gamma}\in \Gamma_{\mathcal{K}_{9}}$. By applying a projective transformation, we may assume $\gamma$ is of the form:
\[\begin{pmatrix}
1 & 0 & 0 & 1 & 0 & 0 & 0 & 0 & 0\\
0 & 1 & 0 & 1 & 1 & 0 & 1 & 0 & 0\\
0 & 0 & 1 & 1 & 0 & 1 & 1 & 1 & 1
\end{pmatrix},\]
where the columns correspond to the points $(1,6,2,7,3,8,9,4,5)$, respectively.  Substituting $x=\epsilon^{-1}$ and $y=\epsilon t$ into the matrix of Equation~\eqref{matris}, we obtain the matrix
\begin{equation}\label{matris 3}\begin{pmatrix}
1 & 0 & 0 & 1 & \epsilon & \epsilon t & \epsilon t+\epsilon-\epsilon^{2}t & \epsilon t+\epsilon-\epsilon^{2}t & \epsilon t\\
0 & 1 & 0 & 1 & 1        & \epsilon t & 1 & 0 & t\\
0 & 0 & 1 & 1 & 0        & 1 & 1 & 1 & 1
\end{pmatrix},\end{equation}
which realizes $\mathcal{K}_{9}$ if the corresponding minors of basis elements do not vanish and the corresponding minor of the points $\{4,5,7\}$ vanishes. 
By letting $t$ and $\epsilon$ approach zero infinitesimally in the matrix of Equation~\eqref{matris 3}, we obtain a realization $\widetilde{\gamma}\in \Gamma_{\mathcal{K}_{9}}$, which represents an infinitesimal motion of $\gamma$.

\medskip
(iii) Since all the matroids $D_{i}$ are related by automorphisms of $\mathcal{K}_{9}$, we may assume without loss of generality that we are considering the matroid $N$ obtained from $\mathcal{K}_{9}$ by identifying the points $\{3,5,8\}$ and $\{2,4\}$. To establish this item, we need to show that $V_{\mathcal{C}(N)}\subset V_{\mathcal{K}_{9}}\cup V_{U_{2,9}}$.

From Theorem~\ref{nil coincide}, we know that $V_{\mathcal{C}(N)}=V_{N}\cup V_{U_{2,9}}$, so it remains to demonstrate that $V_{N}\subset V_{\mathcal{K}_{9}}$. To establish this, we show that for any $\gamma \in \Gamma_{N}$, we can perturb its vectors infinitesimally to obtain $\widetilde{\gamma}\in \Gamma_{\mathcal{K}_{9}}$. By applying a suitable projective transformation, we can assume that  $\gamma$ is of the form:
\[\begin{pmatrix}
1 & 0 & 0 & 1 & 1 & 1 & 0 & 0 & 1\\
0 & 1 & 0 & 1 & 1 & 1 & 1 & 0 & 1\\
0 & 0 & 1 & 1 & 0 & 0 & 1 & 1 & 0
\end{pmatrix},\]
where the columns correspond to the points $(1,6,2,7,3,8,9,4,5)$, respectively.  Substituting $y=\epsilon^{-1}$ into the matrix of Equation~\eqref{matris}, we obtain the matrix
\begin{equation}\label{matris 31}\begin{pmatrix}
1 & 0 & 0 & 1 & 1 & 1 & x+\epsilon-1 & x+\epsilon-1 & 1\\
0 & 1 & 0 & 1 & x        & 1 & x \epsilon & 0 & x\\
0 & 0 & 1 & 1 & 0        & \epsilon & x\epsilon & x\epsilon & \epsilon
\end{pmatrix},\end{equation}
which realizes $\mathcal{K}_{9}$ if the minors corresponding to basis elements do not vanish and the minor corresponding to the points  $\{4,5,7\}$ vanishes. 
By selecting $x$ and $\epsilon$ infinitesimally close to $1$ and $0$, respectively, in the matrix of Equation~\eqref{matris 31}, while ensuring that $(x+\epsilon-1)/x\epsilon$ is infinitesimally close to $0$, we obtain a realization $\widetilde{\gamma}\in \Gamma_{\mathcal{K}_{9}}$, which represents an infinitesimal motion of $\gamma$.

\medskip
(iv) Since all the matroids $E_{i}$ are related by automorphisms of $\mathcal{K}_{9}$, we may assume without loss of generality that we are considering the matroid $N$ obtained from $\mathcal{K}_{9}$ by identifying the points $\{7,8,9\}$. To establish this item, we need to show that $V_{\mathcal{C}(N)}\subset V_{\mathcal{K}_{9}}\cup V_{U_{2,9}}$.

From Theorem~\ref{nil coincide}, we know that $V_{\mathcal{C}(N)}=V_{N}\cup V_{U_{2,9}}$, so it remains to demonstrate that $V_{N}\subset V_{\mathcal{K}_{9}}$. To establish this, we show that for any $\gamma \in \Gamma_{N}$, we can perturb its vectors infinitesimally to obtain $\widetilde{\gamma}\in \Gamma_{\mathcal{K}_{9}}$. By applying a suitable projective transformation, we can assume that  $\gamma$ is of the form:
\[\begin{pmatrix}
1 & 0 & 0 & 1 & z & 1 & 1 & 1 & z\\
0 & 1 & 0 & 1 & 1 & 1 & 1 & 0 & 1\\
0 & 0 & 1 & 1 & 0 & 1 & 1 & 1 & z
\end{pmatrix},\]
where the columns correspond to the points $(1,6,2,7,3,8,9,4,5)$, respectively.  
By taking $y$ infinitesimally close to $1$ and $x$ infinitesimally close to $z^{-1}$ in the matrix of Equation~\eqref{sñ}, we obtain a realization $\widetilde{\gamma}\in \Gamma_{\mathcal{K}_{9}}$, which represents an infinitesimal motion of $\gamma$.
\end{proof}

The proof of Lemma~\ref{l5} follows easily from the same arguments used in the previous lemmas.

\vspace{-3mm}

\bibliographystyle{plain}  
\bibliography{Citation}


\medskip
\noindent 
\small{\textbf{Authors' addresses}

\noindent
Department of Mathematics, KU Leuven, 
Leuven, Belgium
\\ E-mail address: {\tt emiliano.liwski@kuleuven.be}
\\
   E-mail address: {\tt fatemeh.mohammadi@kuleuven.be}

\end{document}